\newcommand{\bone}{\bf 1}
\newcommand{\Iso}{\operatorname{Iso}}
\newcommand{\pe}{\operatorname{pe}}
\newcommand{\Int}{\operatorname{Int}}
\newcommand{\dgk}{\operatorname{dgcat}_k}
\newcommand{\dgku}{\operatorname{dgcat}_{k[u,u^{-1}]}}
\newcommand{\ku}{k[u,u^{-1}]}
\newcommand{\Cb}{\mathbb{C\/}}
\newcommand{\C}{\operatorname{C\/}}
\newcommand{\Z}{\mathbb{Z\/}}
\newcommand{\A}{\mathbb{A\/}}
\newcommand{\X}{\mathcal{X\/}}
\newcommand{\Dstab}{\underline{\D^{\operatorname{b}}}(S)}
\newcommand{\Mod}{\underline{\operatorname{Mod\/}}}
\newcommand{\HP}{\operatorname{HP\/}}
\newcommand{\CC}{\operatorname{C\/}}
\newcommand{\HH}{\operatorname{HH\/}}
\newcommand{\MF}{\operatorname{MF\/}}
\newcommand{\MCM}{\operatorname{MCM\/}}
\newcommand{\id}{\operatorname{id\/}}
\newcommand{\RHom}{\operatorname{RHom\/}}
\newcommand{\IRHom}{\operatorname{R\underline{Hom}\/}}
\newcommand{\Hom}{\operatorname{Hom\/}}
\newcommand{\End}{\operatorname{End\/}}
\newcommand{\Ext}{\operatorname{Ext\/}}
\newcommand{\Tor}{\operatorname{Tor\/}}
\newcommand{\pd}{\operatorname{pd\/}}
\newcommand{\depth}{\operatorname{depth\/}}
\newcommand{\coker}{\operatorname{coker\/}}
\newcommand{\syz}{\operatorname{syz\/}}
\def\L{\operatorname{L\/}}
\def\H{\operatorname{H\/}}
\def\Spec{\operatorname{Spec\/}}
\def\stab{\operatorname{stab\/}}
\def\tr{\operatorname{tr\/}}
\def\D{{\operatorname{D\/}}}
\def\mod{\operatorname{mod\/}}
\newcommand{\mmod}[1]{#1 \text{-} \operatorname{mod\/}}
\def\op{\operatorname{op\/}}
\def\p{\mathfrak{p\/}}
\def\m{\mathfrak{m\/}}
\def\s{\mathfrak{s\/}}
\def\deli{\frac{\partial}{\partial\theta_i}}
\def\lra{\longrightarrow}
\newcommand{\Zt}{\mathbb{Z\/}/2}
\newcommand{\Ho}{\operatorname{Ho\/}}
\newcommand{\mfhom}{\operatorname{MF}}
\newtheorem{theo}{Theorem}[section]
\newtheorem{lem}[theo]{Lemma}
\newtheorem{cor}[theo]{Corollary}
\newtheorem{prop}[theo]{Proposition}
\theoremstyle{definition}
\newtheorem{defi}[theo]{Definition}
\theoremstyle{remark}
\newtheorem{ex}[theo]{Example}
\numberwithin{equation}{section}
\begin{document}

\title{Compact generators in categories of matrix factorizations}
\author{Tobias Dyckerhoff}


\maketitle

\begin{abstract} 
We study the category of matrix factorizations associated to the
germ of an isolated hypersurface singularity. This category is shown to admit
a compact generator which is given by the stabilization of the residue field. We
deduce a quasi-equivalence between 
the category of matrix factorizations and the dg derived category of an explicitly computable dg
algebra. Building on this result, we employ a variant of To\"en's derived Morita theory
to identify continuous functors between matrix factorization categories as
integral transforms. This enables us to calculate the Hochschild chain and
cochain complexes of these categories. Finally, we give
interpretations of the results of this work in terms of noncommutative geometry 
based on dg categories.
\end{abstract}

\tableofcontents

\section{Introduction}

Let $k$ be a field and let $(R,\m)$ be a regular local $k$-algebra of finite
Krull dimension with residue field $k$. 
We fix a non-zero element $w \in \m$ and introduce the corresponding hypersurface
algebra $S = R/w$. We are interested in the case when the hypersurface
$\Spec(S)$ has an isolated singularity at $\m$. Singularities of this kind have been a classical object of study for
centuries. The perspective on hypersurface singularities we take in this work is
one motivated by noncommutative algebraic geometry based on differential graded 
categories in the sense of \cite{pantev}. Namely, we study a dg category which we want to think of as
a category of complexes of sheaves on a hypothetical noncommutative space $\X$
attached to the singularity: the category of matrix factorizations of
$w$. In this work, we establish various properties of this category and
discuss the geometric implications for $\X$. Specifically, we show that $\X$
is a dg affine, homologically smooth and proper noncommutative Calabi-Yau space over $k$. 
We study the derived Morita theory of matrix factorization
categories which enables us to determine their Hochschild cohomology. We 
calculate the noncommutative analogues of Hodge and de Rham cohomology 
and show that the Hodge-to-de Rham spectral sequence degenerates.

A matrix factorization of $w$ is defined to be a $\Zt$-graded finite free
$R$-module $X$ together with an $R$-linear endomorphism $d$ of odd degree satisfying
$d^2 = w \id_X$. The collection of all matrix factorizations naturally forms a differential
$\Zt$-graded category which we denote by $\MF(R,w)$. The associated homotopy
category is denoted by $[\MF(R,w)]$.
Matrix factorizations first appeared in Eisenbud's work
\cite{eisenbud} on the homological algebra of complete intersections. Since
then, they have been used extensively in singularity theory. We refer the reader
to \cite{yoshino} for a survey as well as further references. In the unpublished work \cite{buchweitz}, Buchweitz introduced the
notion of the stabilized derived category, giving a new conceptual perspective on
Eisenbud's work and extending it to a more general context.
More recently, matrix factorizations were proposed by Kontsevich as descriptions
of $B$-branes in Landau-Ginzburg models in topological string theory.
As such they appear in the framework of mirror symmetry as for example explained
in \cite{seidel}.
Orlov \cite{orlov-2003} introduced the singularity category,
generalizing Buchweitz's categorical construction to a global setup, and
established various important results in
\cite{orlov-2005,orlov-2005-1,orlov-2009}.

In Section \ref{sect.survey},
we survey some important aspects of the inspiring articles
\cite{eisenbud} and \cite{buchweitz}, which lead to the intuitive
insight that the category of matrix factorizations describes the stable
homological features of the algebra $S$. The main purpose is to introduce notation
and to formulate the results in the form needed later on.

Section \ref{sect.conditions} introduces finiteness conditions on the singular
and critical locus of the function $w$, as well as finiteness and regularity
conditions on the ambient ring $R$ in both local and nonlocal scenarios.

Section \ref{sect.generator} addresses the question of the existence of
generators in matrix factorization categories. We construct a compact
generator of the category $\MF^{\infty}(R,w)$ consisting of factorizations of
possibly infinite rank. Our argument utilizes Bousfield localization
to reduce the problem to a statement
which we call Homological Nakayama Lemma for infinitely generated maximal
Cohen-Macaulay modules. This lemma seems to be an interesting result in its own
right since the
Nakayama lemma obviously fails for general infinitely generated modules. 
We use a method of Eisenbud to explicitly construct the generator as a matrix factorization 
corresponding to the stabilization of the residue field. Finally, we prove a localization theorem which allows us to generalize the generation result 
to certain nonlocal ambient rings.

Section \ref{sect.app} contains some first applications of the results on
compact generation.
We start by introducing a homotopy theoretic framework for $2$-periodic dg
categories, analogous to \cite{toen.morita}, that will serve as a natural
context to study matrix factorization categories.

Using a method due to Keller \cite{keller}, we obtain a quasi-equivalence between
the category $\MF^{\infty}(R,w)$ and the dg derived category of modules over a dg algebra $A$. 
This algebra $A$ is given as the endomorphism algebra of the compact generator. Our
concrete description of this generator as a stabilized residue field allows us to determine $A$
explicitly. As an immediate corollary, we obtain that the idempotent completion
of $[\MF(R,w)]$ coincides with $[\MF(\widehat{R},w)]$. We also give a short
proof of Kn\"orrer periodicity in this context.

In addition, we illustrate how to compute a 
minimal $A_{\infty}$-model for $A$. The transfer method we use originates from
the work of Gugenheim-Stasheff \cite{gug.stash} and Merkulov \cite{merkulov},
the elegant description in terms of trees is due to Kontsevich-Soibelman
\cite{kont.soib2}. 
In the case of a quadratic hypersurface the
$A_{\infty}$-structure turns out to be formal and we recover a variant of a
result of Buchweitz, Eisenbud and Herzog \cite{buchweitz2} describing
matrix factorizations as modules over a certain Clifford algebra.
In the general case, we are able to give partial formulas for the higher
multiplications which are neatly related to the higher coefficients of $w$.

In Section \ref{sect.morita}, we use To\"en's derived Morita theory for dg categories
\cite{toen.morita} to describe functors between categories of matrix
factorizations. It turns out that every continuous functor can be represented by an
integral transform. We describe the identity
functor as an integral transform with kernel given by the stabilized diagonal.
This allows us to calculate the Hochschild cochain complex of matrix
factorization categories as the derived endomorphism complex of the identity
functor.

Furthermore, we compute the Hochschild chain complex, proving along the way that 
it is quasi-isomorphic to the derived homomorphism complex between the inverse Serre
functor and the identity functor.

In the last section, we give interpretations of our results in terms of
noncommutative geometry in the sense of \cite{pantev}.

Finally, I would like to point out relations to previous work. I thank Daniel
Murfet for informing me that Corollary \ref{split} was first proven by Schoutens in
\cite{schoutens}. An independent proof by Murfet will be contained as an
appendix in
\cite{keller.bergh}. It is also possible to deduce the statement using results from
\cite{orlov-2009} as explained in \cite[11.1]{seidel}.
However, Theorem \ref{generator} is stronger since it also
implies that the category $\left[ \MF^{\infty}(R,w) \right]$ is compactly
generated which is essential to obtain Theorem \ref{equivalence}. We point
out that this fact can alternatively be obtained by using methods developed in \cite{chen.xw}.

I thank Paul Seidel for drawing my attention to his work \cite{seidel}. The
dg algebra $A = \End(k^{\stab})$ which we construct in Section
\ref{sect.app} already appears in Section 10 of loc. cit. and is interpreted as a
deformed Koszul dual. In fact, one may expect that the algebra $A$ is in fact
the Koszul dual, in the sense of \cite{positselski}, of the curved dg algebra $R$ with zero differential and
curvature $w$. Within this framework, Theorem \ref{equivalence} could be interpreted as an
equivalence of appropriate module categories over the curved algebra $R$ and its Koszul dual
$A$. The homological perturbation techniques which we apply in Section
\ref{sect.app}
were already used in Section 10 of \cite{seidel}.

The idea of describing functors between matrix factorization categories as
integral transforms appeared in \cite{khovanov}. Furthermore, our Theorem \ref{nakayama} is
inspired by Proposition 7 in loc. cit. 
As the authors informed me, the argument in loc. cit. 
is only valid for bounded below $\Z$-graded matrix factorizations of isolated singularities, which is sufficient for the purposes of loc. cit.
However, in this work we are specifically interested in the $\Zt$-graded case so
we have to use the alternative argument given in the proof of Theorem \ref{nakayama}.
As an application, we then also prove a $\Zt$-graded version of
\cite[Proposition 8]{khovanov} in the form of Corollary \ref{cohomology}.

The relation between idempotent completion and formal
completion is studied in a more general context on the level of
triangulated categories in \cite{orlov-2009}.

A heuristic calculation of the
Hochschild cohomology in the one-variable case with $w = x^n$ was carried out in
\cite{kapustin.rozansky}. This article already contains the essential idea to
represent the identity functor by a matrix factorization. 
We also mention that by \cite{takahashi-2005, 2005taksaito} the $\Z$-graded matrix
factorization category of a simple singularity is shown to be derived Morita
equivalent to the corresponding path algebra. This allows for a calculation of
the Hochschild invariants in this situation.

There are two alternative approaches to the calculation of Hochschild invariants of matrix
factorization categories. In \cite{segal}, the bar complex of the category is
used to calculate Hochschild homology. However, the author uses the product
total complex and it is not clear how this notion of homology is related to the
usual Hochschild homology. 
Therefore, some additional reasoning is required to make this argument work.
In forthcoming work by Caldararu-Tu \cite{caldararu}, the category of matrix
factorizations is considered as a category of modules over the above mentioned
curved dg algebra given by $R$ in even degree, zero differential and curvature
$w$. From this curved dg algebra the authors construct an explicit bar complex,
generalizing the bar complex for dg algebras. The Hochschild homology of the
curved dg algebra is then defined to be the cohomology of this
complex. The relation between the
Hochschild homology of the curved dg algebra and the Hochschild homology of the category of
modules over it is stated as a conjecture.

I should also mention that Theorem \ref{hochschild} has been anticipated
for some time. For example, it is stated without proof in
\cite{pantev}. However, to my knowledge, no complete proof has previously
appeared in the literature.\\

{\bf Acknowledgements.} I would like to thank my thesis advisor Tony Pantev for his
continuous support and interest in my work. Furthermore, I thank Matthew Ballard, Jonathan Block, Pranav Pandit,
Jim Stasheff and Bertrand To\"en for many inspiring conversations. I also thank
Mohammed Abouzaid, Vladimir Baranovsky, Chris Brav, Ragnar Buchweitz, Andrei Caldararu, Xiao-Wu Chen, Daniel Murfet, Ed
Segal and Paul Seidel for valuable comments. Finally, I am indebted to two
anonymous referees for their careful reading, corrections and suggestions.

\section{Homological algebra of matrix factorizations}
\label{sect.survey}

Let $(R,\m)$ be a regular local ring of finite Krull dimension. We fix a non-zero element $w \in \m$.

\begin{defi} The \emph{category of matrix factorizations} $\MF(R,w)$ of $w$ over $R$ is defined to be the
differential $\Zt$-graded category specified by the following data:
\begin{itemize}
\item The objects of $\MF(R,w)$ are pairs
$( X, d)$ where $X = X^0 \oplus X^1$ is a free $\Zt$-graded $R$-module of finite
rank equipped with an $R$-linear map $d$ of odd degree satisfying $d^2 = w
\id_X$.
\item The morphism complexes $\mfhom(X,X')$ are given by the $\Zt$-graded module of $R$-linear
maps from $X$ to $X'$ provided with the differential given by
\[
d(f) = d_{X'} \circ f - (-1)^{|f|} f \circ d_{X} \text{.}
\]
\end{itemize}
One easily verifies that $\mfhom(X,X')$ is a complex.
The \emph{homotopy category of matrix factorizations} $\left[ \MF(R,w) \right]$ is
obtained by applying $\H^0(-)$ to the morphism complexes of $\MF(R,w)$.
The set of morphisms in the homotopy category will be denoted by $[X,X']$.
\end{defi}
 
After choosing bases for $X^0$ and $X^1$, we obtain a pair
\[
\xymatrix{ 
X^1 \ar@<+.5ex>[r]^{\varphi} & \ar@<+.5ex>[l]^{\psi} X^0
}
\]
of matrices $(\varphi,\psi)$ such that 
\[
\varphi \circ \psi = \psi \circ \varphi = w \id \text{.}
\]
This immediately implies that the ranks of $X^0$ and $X^1$ agree, so $\varphi$
and $\psi$ are in fact square matrices.

\begin{ex}
Consider $R = \Cb[ [x]]$ and $w = x^n$. Then we have factorizations
\[
\xymatrix{ 
R \ar@<+.5ex>[r]^{x^k} & \ar@<+.5ex>[l]^{x^{n-k}} R 
}
\]
and these are in fact the only indecomposable objects in
$\operatorname{Z}^0(\MF(R,w))$ (cf. \cite{yoshino}, \cite{kapustin-li}). 
\end{ex}
\begin{ex}
Consider $R = \Cb[[x,y,z]]$ and $w = x^3 + y^3 + z^3 - 3xyz$. Suppose that
$(a,b,c) \in (\Cb^*)^3$ is a zero of $w$. The matrix
\[
\varphi =\left( \begin{array}{rrr} 
ax & cy & bz\\
cz & bx & ay\\
by & az & cx
\end{array} \right)
\]
satisfies $\det(\varphi) = abc w$. Thus setting $\psi =
\frac{1}{abc}\varphi^{\#}$, where $-^{\#}$ denotes the matrix of cofactors, we obtain a
family of rank $3$ factorizations parameterized by $\{ w = 0 \} \subset
(\Cb^*)^3$ (cf. \cite{brunner}).
\end{ex}

We will now explain the relevance of the category of matrix factorizations in
terms of homological algebra. To this end, we recall how matrix factorizations
naturally arise in Eisenbud's work \cite{eisenbud}. 

\subsection{Eisenbud's matrix factorizations}

\label{subsect.factorizations}

We start with a prelude on the homological algebra of regular local rings that will 
put us in the right context.
Let $(R,\m)$ be a regular local ring and let $M$ be a finitely generated
$R$-module. A sequence $x_1,\dots,x_r \in \m$ is called an $M$-sequence if $x_i$
is a nonzerodivisor in $M/(x_1,\dots,x_{i-1})$ for all $1 \le i \le r$. The
\emph{depth} of $M$ is the length of a maximal $M$-sequence. The \emph{projective
dimension} $\pd(M)$ of $M$ is the length of a minimal free resolution of $M$.
The Auslander-Buchsbaum formula (see e.g. \cite{eisenbud.comalg}) relates these notions via
\[
\pd(M) = \dim(R) - \depth(M) \text{.}
\]
This yields a rather precise understanding of free resolutions over regular
local rings. Let us point out two immediate important consequences. Firstly,
the length of minimal free resolutions is bounded by the Krull dimension of $R$.
Secondly, if the depth of a module $M$ equals the Krull dimension of $R$, then
$M$ is free.

A natural problem is to try and obtain a similar
understanding of free resolutions over singular rings. An example of such a 
ring is a hypersurface singularity defined by $S =
R/w$, where $w$ is singular at the maximal ideal. 
It turns out that in contrast to the regular case, the condition
\begin{equation}
\label{mcm}
\depth(M) = \dim(S)
\end{equation}
does not imply that $M$ is free. A finitely generated $S$-module satisfying 
(\ref{mcm}) is called a \emph{maximal Cohen-Macaulay module}.

Let $M$ be a maximal Cohen-Macaulay module over $S$. We may consider $M$ as an $R$-module
which is annihilated by $w$ and use the Auslander-Buchsbaum formula
\[
\pd_R(M) = \depth(R) - \depth(M)
\]
to deduce that $M$ admits an $R$-free resolution of length $1$. Hence, we obtain
an exact sequence
\[
\xymatrix{
0 \ar[r] & X^1 \ar[r]^{\varphi} &  X^0 \ar[r] &  M \to 0
}
\]
where $X^0$ and $X^1$ are free $R$-modules.
Since multiplication by $w$ annihilates $M$, there exists a homotopy $\psi$ such
that the diagram 
\[
\xymatrix{
X^1 \ar[r]^{\varphi} \ar[d]_w & \ar[dl]_{\psi} X^0 \ar[d]^w\\
X^1 \ar[r]^{\varphi} & X^0
}
\]
commutes. Thus, the pair $(\varphi, \psi)$ is a matrix factorization of $w$,
such that the original maximal Cohen-Macaulay module $M$ is isomorphic to
$\coker(\varphi)$.

We come back to the question about properties of $S$-free resolutions of $M$. Curiously,
every maximal Cohen-Macaulay module over $S$ admits a $2$-periodic $S$-free resolution.
It is obtained by reducing the corresponding matrix factorization modulo $w$
and extending $2$-periodically:
\[
\xymatrix{ 
 \dots \ar[r] & \overline{X^1} \ar[r]^{\overline{\varphi}} & 
\overline{X^0} \ar[r]^{\overline{\psi}} &  \overline{X^1}
\ar[r]^{\overline{\varphi}} &
\ar[r] \overline{X^0} & M \ar[r] & 0 \text{.}
}
\]

We illustrate the consequences of this construction from a categorical point of view. Consider 
the \emph{stable category of maximal Cohen-Macaulay $S$-modules} $\underline{\MCM}(S)$ which is defined as follows.
The objects are maximal Cohen-Macaulay modules, the morphisms are defined by
\[
\underline{\Hom}_S(M,M') = \Hom_S(M,M') / P \text{,}
\]
where $P$ denotes the set of $S$-linear homomorphisms factoring through some free $S$-module.
Reversing the above construction we can associate the maximal Cohen-Macaulay module $\coker(\varphi)$ 
to a matrix factorization given by 
\[
\xymatrix{ 
X^1 \ar@<+.5ex>[r]^{\varphi} & \ar@<+.5ex>[l]^{\psi} X^0 \text{.}
}
\]
In fact, this assignment extends to a functor
\[
\coker : \; \left[ \MF(R,w) \right] \to \underline{\MCM}(S)
\]
establishing an equivalence between the homotopy category of matrix factorizations
and the stable category of maximal Cohen-Macaulay modules.

\subsection{Buchweitz's stabilized derived category}

We now want to focus on resolutions of arbitrary finitely generated $S$-modules. 
To this end, we are lucky as it turns out that high enough syzygies of any such 
module are maximal Cohen-Macaulay. This follows from the homological
characterization of maximal Cohen-Macaulay modules as being
$\Hom_S(-,S)$-acyclic combined with the fact that $S$ has finite injective
dimension. An immediate implication is the following striking result.

\begin{theo}[Eisenbud]  \label{twoperiodic} Every finitely
generated $S$-module admits a free resolution which will eventually become
$2$-periodic. 
\end{theo}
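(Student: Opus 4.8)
The plan is to reduce the statement to the structure theory already assembled in the excerpt: the key point is that for any finitely generated $S$-module $M$, a sufficiently high syzygy $\syz^t(M)$ is a maximal Cohen-Macaulay module, and every maximal Cohen-Macaulay module has a $2$-periodic free resolution coming from its matrix factorization. So the proof is really an assembly of two facts, one of which we must justify (``high syzygies are MCM'') and one of which is essentially the construction recalled in Section \ref{subsect.factorizations}.

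First I would take a minimal free resolution $\cdots \to F_1 \to F_0 \to M \to 0$ over $S$ and let $N = \syz^t(M) = \ker(F_{t-1} \to F_{t-2})$ for $t$ large (to be specified). The claim I need is that $N$ is maximal Cohen-Macaulay, i.e. $\depth_S(N) = \dim S$, once $t \ge \dim S$. The standard way to see this is via the depth lemma applied to the short exact sequences $0 \to \syz^{i+1}(M) \to F_i \to \syz^i(M) \to 0$: since $\depth F_i = \dim S$ (as $S$ is Cohen-Macaulay, being a hypersurface), the depth lemma gives $\depth \syz^{i+1}(M) \ge \min\{\depth F_i, \depth \syz^i(M)+1\}$, so the depths increase by at least one with each syzygy step until they saturate at $\dim S$. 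Hence after at most $\dim S$ steps the syzygy is MCM. Alternatively, and perhaps cleaner in this context, one invokes the homological characterization of MCM modules over the Gorenstein ring $S$ as the $\Hom_S(-,S)$-acyclic modules (mentioned just above the theorem): a high enough syzygy kills all the higher $\Ext_S^j(-,S)$ because $S$ has finite injective dimension, so $\Ext_S^j(N,S) = \Ext_S^{j+t}(M,S) = 0$ for all $j \ge 1$ once $t > \operatorname{injdim}_S S = \dim S$, and this is precisely the condition for $N$ to be MCM.

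Next, having established that $N = \syz^t(M)$ is maximal Cohen-Macaulay, I apply the construction recalled in the excerpt: $N \cong \coker(\varphi)$ for a matrix factorization $(\varphi,\psi)$ of $w$, and reducing mod $w$ and extending $2$-periodically yields a $2$-periodic $S$-free resolution
\[
\cdots \to \overline{X^1} \xrightarrow{\overline{\varphi}} \overline{X^0} \xrightarrow{\overline{\psi}} \overline{X^1} \xrightarrow{\overline{\varphi}} \overline{X^0} \to N \to 0 .
\]
Splicing this resolution of $N = \syz^t(M)$ onto the initial segment $F_{t-1} \to \cdots \to F_0 \to M$ of the resolution of $M$ produces a free resolution of $M$ that agrees with the $2$-periodic resolution of $N$ from degree $t$ onward — in particular it is eventually $2$-periodic. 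One should remark that ``eventually $2$-periodic'' is exactly what is claimed, not ``$2$-periodic from the start,'' so no minimality comparison between the two resolutions is needed; it also means there is no loss in replacing the minimal resolution of $M$ by this spliced one.

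The main obstacle, such as it is, is the MCM-ness of high syzygies — everything else is bookkeeping or was set up in Section \ref{subsect.factorizations}. Concretely one must be careful that $S = R/w$ is indeed Cohen-Macaulay (true since $R$ is regular and $w$ is a nonzerodivisor, $w$ lying in $\m$ and $R$ being a domain) so that the free modules $F_i$ have full depth and the depth induction runs; and one must cite the finiteness of the injective dimension of $S$ over itself (Gorenstein, again because $S$ is a hypersurface) to run the $\Ext$-vanishing argument. I would present the depth-lemma version as the primary argument since it is elementary and self-contained, mentioning the $\Ext$-acyclicity reformulation as the conceptual reason, and then close by invoking the matrix factorization construction verbatim from the previous subsection.
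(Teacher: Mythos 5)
Your proposal is correct and follows essentially the same route as the paper: reduce to the fact that a sufficiently high syzygy of $M$ is maximal Cohen-Macaulay (which the paper justifies exactly as you do, via $\Hom_S(-,S)$-acyclicity and the finite injective dimension of the Gorenstein hypersurface $S$), and then splice on the $2$-periodic resolution coming from the matrix factorization of that syzygy as in Section \ref{subsect.factorizations}. Your additional depth-lemma argument is a fine elementary substitute for the $\Ext$-vanishing step, but it is the same overall strategy.
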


In other words, the resolution ``stabilizes'' leading
to the slogan that the category $\underline{\MCM}(S)$ describes the stable
homological algebra of $S$.

While the category $\underline{\MCM}(S)$ restricts attention to maximal Cohen-Macaulay modules,
Buchweitz's stabilized derived category is designed to capture the fact
that \emph{arbitrary} finitely generated $S$-modules stabilize.
Let $\D^\text{b}(S)$ denote the derived category of all complexes of $S$-modules with
finitely generated total cohomology. Such a complex is called \emph{perfect} 
if it is isomorphic in $\D^\text{b}(S)$ to a bounded complex of free $S$-modules. The
full triangulated subcategory of $\D^\text{b}(S)$ formed by the perfect complexes is
denoted by $\D^\text{b}_{\text{perf}}(S)$. It is easy to see that $\D^\text{b}_{\text{perf}}(S)$
forms a thick subcategory of $\D^\text{b}(S)$. The \emph{stabilized derived category of $S$} is then
defined to be the Verdier quotient
\[
\Dstab :=   \D^\text{b}(S) / \D^\text{b}_{\text{perf}}(S) \text{.}
\]
There exists an obvious functor 
\[
\underline{\MCM}(S) \to \Dstab
\]
which Buchweitz proves to be an equivalence of categories. Observe that 
$\Dstab$ as well as $\left[ \MF(R,w) \right]$ are naturally triangulated
categories that via the above equivalences induce two triangulated structures on
$\underline{\MCM}(S)$. Those structures turn out to be isomorphic (via the
identity functor). The triangulated structure can also be constructed directly
using the fact that $\underline{\MCM}(S)$ is the stable category associated to
the Frobenius category $\MCM(S)$ (cf. \cite{keller-frob}).

It is interesting to describe the morphisms in the category 
$\Dstab$.

\begin{prop}[Buchweitz]
Let $X$, $Y$ be complexes in $\Dstab$. Then there exists a natural
number $i(X,Y)$ such that 
\[
\Hom_{\Dstab}(X,Y[i]) \cong \Hom_{\D^\text{b}(S)}(X,Y[i])
\]
for $i \ge i(X,Y)$.
\end{prop}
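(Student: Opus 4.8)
The plan is to exploit the fact that $\D^\text{b}(S)$ has finitely generated Hom-spaces together with the boundedness statements available over a hypersurface, and to translate the vanishing of $\Hom_{\D^\text{b}_{\text{perf}}(S)}$-contributions into the claimed stabilization. First I would recall the general description of morphisms in a Verdier quotient: a morphism in $\D^\text{b}(S)/\D^\text{b}_{\text{perf}}(S)$ from $X$ to $Y[i]$ is represented by a roof $X \leftarrow X' \to Y[i]$ where the cone of $X' \to X$ is perfect. By the theory developed above (high syzygies become maximal Cohen-Macaulay, and $\underline{\MCM}(S) \simeq \Dstab$), we may replace $X$ and $Y$ up to isomorphism in $\Dstab$ by maximal Cohen-Macaulay modules, or more conveniently by their complete $2$-periodic resolutions, so that the mapping cones involved are controlled. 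The key point is that the canonical functor $\D^\text{b}(S) \to \Dstab$ becomes \emph{fully faithful in high degrees} after killing the perfect complexes, precisely because perfect complexes over a hypersurface have bounded amplitude.

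The core step is the following: I would show that for fixed $X, Y \in \D^\text{b}(S)$ there is an integer $i(X,Y)$ such that every object $P \in \D^\text{b}_{\text{perf}}(S)$ admitting a morphism to $X$ or from $Y[i]$ in a calculus-of-fractions roof can be ignored once $i \ge i(X,Y)$. Concretely, $\Hom_{\D^\text{b}(S)}(P, Y[i]) = 0$ for $i \gg 0$ when $P$ is perfect (perfect complexes are cohomologically bounded and have finite projective dimension, so $\RHom_S(P,Y)$ is bounded), and dually $\Hom_{\D^\text{b}(S)}(X[-i], P) = 0$ for $i \gg 0$. These two vanishing ranges depend only on the cohomological amplitudes of $X$, $Y$, and one can take $i(X,Y)$ to be their maximum (plus a small shift). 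Given this, the long exact sequence coming from a triangle $P \to X' \to X \to P[1]$ with $P$ perfect shows $\Hom_{\D^\text{b}(S)}(X,Y[i]) \xrightarrow{\sim} \Hom_{\D^\text{b}(S)}(X',Y[i])$ for $i \ge i(X,Y)$, and since every roof can be refined to one of this form, the localization map $\Hom_{\D^\text{b}(S)}(X,Y[i]) \to \Hom_{\Dstab}(X,Y[i])$ is an isomorphism in that range. One must also check it is injective, i.e. that a map in $\D^\text{b}(S)$ which becomes zero in $\Dstab$ already factors through a perfect complex, hence is zero on high Ext by the same amplitude bound.

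I expect the main obstacle to be the bookkeeping in the calculus of fractions: making precise that an arbitrary roof $X \leftarrow X' \to Y[i]$ can be replaced by one whose ``defect'' $P$ is a genuine perfect complex with controlled amplitude, uniformly in $i$. The cleanest route is probably to invoke Buchweitz's equivalence $\underline{\MCM}(S) \simeq \Dstab$ to reduce to the case where $X, Y$ are maximal Cohen-Macaulay modules: then $\Hom_{\Dstab}(X, Y[i])$ is computed by the $2$-periodic complete resolution, which for $i \ge 1$ already agrees with $\Ext^i_S(X,Y) = \Hom_{\D^\text{b}(S)}(X, Y[i])$ because the truncation that distinguishes the complete resolution from the ordinary one lives in nonpositive degrees. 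For general $X, Y$ one then replaces them by sufficiently high syzygies, which shifts cohomological degrees by a bounded amount and hence only enlarges $i(X,Y)$ by a constant. The rest is routine verification that these identifications are compatible with composition and with the localization functor.
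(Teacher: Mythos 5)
The paper itself states this proposition without proof (it is quoted from Buchweitz's unpublished work), so your argument has to stand on its own. The ``core step'' in your second paragraph has a genuine gap: you claim that $\Hom_{\D^\text{b}(S)}(P,Y[i])=0$ for $i\ge i(X,Y)$ with a bound depending only on the cohomological amplitudes of $X$ and $Y$, uniformly over the perfect complexes $P$ occurring as defects of roofs. That uniformity is false, because the defects do not have bounded amplitude: $P=S[m]$ is perfect for every $m$, and $\Hom_{\D^\text{b}(S)}(S[m],Y[i])\cong \H^{i-m}(Y)$ is nonzero for suitable $m$ no matter how large $i$ is, while roofs with such defects certainly occur (take $X'=X\oplus S[m-1]$ with the projection to $X$). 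So it is not true that every transition map $\Hom_{\D^\text{b}(S)}(X,Y[i])\to\Hom_{\D^\text{b}(S)}(X',Y[i])$ in the filtered system computing $\Hom_{\Dstab}(X,Y[i])$ is an isomorphism in the claimed range, and the localization argument as you state it does not close. What one actually needs is a cofinality statement: roofs of the special form $\syz^k(X)[k]\to X$, whose defect is a perfect complex concentrated in a window controlled by $X$, already compute the colimit.

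That cofinality is precisely what your last paragraph supplies, and that route is the correct one (it is essentially Buchweitz's own): for $k\gg 0$ the syzygy $\syz^k(X)$ is maximal Cohen--Macaulay, the complete ($2$-periodic) resolution of a maximal Cohen--Macaulay module agrees with its ordinary projective resolution in degrees $\ge 0$, hence $\Hom_{\Dstab}(M,N[i])\cong\Ext^i_S(M,N)$ for $i\ge 1$ when $M$ is maximal Cohen--Macaulay, and passing to general bounded complexes changes both sides only in a range of degrees bounded by the amplitudes of $X$ and $Y$, which is where the finite bound $i(X,Y)$ really comes from. Two points there deserve more than ``routine verification'': first, that $\Hom_{\Dstab}(M,-[i])$ is computed by the complete resolution is the substantive input, resting on the equivalence $\underline{\MCM}(S)\simeq\Dstab$ and the identification of the shift with cosyzygy; second, if you also replace $Y$ by syzygies you must track the error terms coming from the free modules (or the perfect truncation pieces), which live in a bounded range of degrees and hence only enlarge $i(X,Y)$. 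So the fix is to discard the uniform-vanishing claim of the second paragraph and promote the syzygy/complete-resolution argument to the main proof.
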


The proposition explains the nomenclature for $\Dstab$. The
$\Ext$-groups in the derived category ``stabilize'' in high degrees. After this
stabilization has taken place, the $\Ext$-groups and the
$\underline{\Ext}$-groups coincide. Buchweitz introduces $\Dstab$ more generally
for Gorenstein algebras. In our specific situation of a local hypersurface
algebra $S$ the phenomenon of stabilization just translates into the above mentioned fact that resolutions over $S$
eventually become $2$-periodic.

Combining the equivalences of categories explained in this subsection, we conclude
that a finite $S$-module interpreted as an object of $\Dstab$ functorially
corresponds to a matrix factorization. If $L$ is an $S$-module, we call the corresponding matrix
factorization $L^{\stab}$ the \emph{stabilization of} $L$. 
The objects of our interest tend to naturally arise as objects of $\Dstab$ and we will
analyze them computationally by studying their stabilization. 

\subsection{Stabilization}

\label{stabilization}

Let $L$ be an $S$-module. In \cite[Section 7]{eisenbud}, Eisenbud gives a method for explicitly constructing $L^{\stab}$
in terms of an $R$-free resolution of $L$. We apply his construction in
the case when $L$ is a module of the form $L = R/I$ such that the ideal $I$ is
generated by a regular sequence and $w \in I$. Since we use the construction
throughout the article, we
give a detailed description of this special case. We point out that this
construction already appears as a key technique in \cite[Section
2]{buchweitz.greuel}. Its meaning was further
clarified in \cite{avramov} via the use of dg modules over Koszul dg algebras.

Let $f_1, \dots, f_m$ be a regular sequence generating $I$. Consider the 
corresponding Koszul complex
\[
K = ({\textstyle\bigwedge\nolimits}^* V, \;s_0) \text{,}
\]
where $V = R^m$ and $s_0$ denotes contraction with $(f_1,\dots,f_m) \in
\Hom_R(V,R)$. The
complex $K$ is an $R$-free resolution of $L$.
Since $w$ annihilates the $R$-module $L$, multiplication by $w$ on $K$ is homotopic to zero. In
fact, we can explicitly construct a contracting homotopy. Since $w \in I$, we can write 
$w = \sum_i f_i w_i$ for some elements $w_i \in R$. 
Exterior multiplication with the element $(w_1,\ldots,w_m) \in V$
defines a contracting homotopy which we denote by $s_1$.

Since both $s_0$ and $s_1$ square to $0$, the $\Zt$-graded object
\[
(\bigoplus_{i=0}^m {\textstyle\bigwedge\nolimits}^{i} V, \;s_0 + s_1 )
\]
defines a matrix factorization of $w$. We claim that it represents the
stabilization of $L$. To see this we will construct an explicit $S$-free
resolution of $L$. Define $Z$ to be the total complex of the double complex
\[
\xymatrix{
\ddots & & \ddots  & &\ddots & \\
 & \overline{K^m} \ar[r]^{s_0} & \overline{K^{m-1}} \ar[r]^{s_0} \ar[d]^{s_1} &
 \overline{K^{m-2}} \ar[r] \ar[d]^{s_1} & \dots\ar[r] & \overline{K^{0}} \ar[d]^{s_1}\\
 &  & \overline{K^m} \ar[r]^{s_0} & \overline{K^{m-1}} \ar[d]^{s_1} \ar[r] &
 \dots\ar[r] & \overline{K^{1}} \ar[r]^{s_0}\ar[d]^{s_1} & \overline{K^{0}} \ar[d]^{s_1}\\
 &  & & \overline{K^m} \ar[r] & \dots \ar[r] & \overline{K^{2}} \ar[r]^{s_0} &
 \overline{K^{1}}
\ar[r]^{s_0} & \overline{K^{0}}
}
\]
where $\overline{-}$ denotes the functor $- \otimes_R S$. 

\begin{lem}[Eisenbud] \label{sfree}The complex $Z$ is an $S$-free resolution of $L$.
\end{lem}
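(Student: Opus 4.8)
The plan is to show that the total complex $Z^\bullet$ of the displayed half-infinite double complex is acyclic in positive degrees and has $H^0 \cong L$. The key structural observation is that $Z^\bullet$ is built from the matrix factorization $(\bigoplus_i \bigwedge^i V, s_0 + s_1)$ of $w$ by the universal recipe for turning a matrix factorization into a $2$-periodic $S$-free resolution: one takes the underlying free module, reduces modulo $w$, and splices copies together using the two components of the differential. So the first step is to recognize that $Z^\bullet$ is precisely the standard $S$-free complex associated to the matrix factorization $(s_0 + s_1)$ via $\coker$, as recalled in Subsection \ref{subsect.factorizations}; in particular it is automatically a complex, and its homology in high degrees computes the stable syzygies of $\coker(s_0+s_1)|_{\text{ev}}$.

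The heart of the argument is exactness. I would prove it by a filtration/spectral sequence argument on the double complex. Filter $Z^\bullet$ by columns (equivalently, by the number of applied $s_1$'s, i.e.\ by the ``copy'' of $\overline{K^\bullet}$ one sits in). The associated graded pieces are shifted copies of the complex $(\overline{K^\bullet}, s_0) = K^\bullet \otimes_R S$, whose cohomology is $\Tor^R_*(L, S)$. Now $L = R/I$ with $I = (f_1,\dots,f_m)$ generated by a regular sequence containing $w$, so $K^\bullet$ is an $R$-free resolution of $L$ and $\Tor^R_*(L,S) = H_*(K^\bullet \otimes_R S)$. Since $w \in I$ and $w$ is part of a regular system (more precisely, $f_1,\dots,f_m$ is a regular sequence and $S = R/w$), one computes that $\Tor^R_i(L,S)$ is $L$ for $i = 0$ and $i = 1$ (coming from the Koszul relation $w = \sum f_i w_i$) — this is the standard computation of $\Tor$ of a complete intersection against one of its defining hypersurfaces. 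Tracking these $E_1$-terms through the staircase shape of the double complex, the $d_1$-differential (induced by $s_1$) identifies the degree-$0$ contribution of one column with the degree-$1$ contribution of the next, so everything cancels in positive total degree and leaves exactly $L$ in degree $0$. I would make this precise by an explicit diagram chase on the $E_1$-page rather than invoking heavy machinery.

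An alternative, possibly cleaner route avoiding spectral sequences: exhibit a contracting homotopy on $Z^\bullet$ in positive degrees directly, built from the homotopy $s_1$ and a splitting of $\overline{K^\bullet} \to L$ in the relevant range, using that $s_0 + s_1$ squares to $w$ and $w = 0$ in $S$. One can also argue by induction on $m$: the case $m = 1$ (so $I = (w)$ and $L = S$) gives the tautological $2$-periodic resolution $\dots \to S \xrightarrow{0} S \xrightarrow{\id} S \to \dots$ truncated appropriately, and the inductive step peels off one $f_i$ using the fact that adjoining a new element of a regular sequence corresponds to tensoring the matrix factorization with a Koszul-type factor, which on the level of resolutions is a mapping-cone construction that preserves acyclicity.

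The main obstacle I anticipate is bookkeeping: the double complex is only a half-plane (it terminates on the right at $\overline{K^m}$ but extends infinitely down and to the left), so one must be careful that the column filtration is exhaustive and bounded in each total degree — which it is, since in each fixed total degree only finitely many of the $\overline{K^i}$ (those with $0 \le i \le m$) contribute — so the spectral sequence converges without convergence subtleties. The only genuinely substantive input is the $\Tor$ computation $\Tor^R_*(R/I, R/w)$, and that is elementary given the regular sequence hypothesis. Everything else is diagram chasing, so I would present the column spectral sequence argument as the main line and relegate the $\Tor$ computation to a sentence.
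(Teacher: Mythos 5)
Your main line is the same as the paper's: filter the double complex so that the $E_1$-page computes the cohomology of each copy of $\overline{K^\bullet}=K^\bullet\otimes_R S$, identify it with $\Tor^R_*(L,S)\cong L\oplus L$ (degrees $0$ and $1$), and let the differential induced by $s_1$ cancel everything except one copy of $L$ in total degree $0$; the convergence remark is also correct, since each total degree involves only finitely many terms. The problem is that the one genuinely substantive step is exactly the one you assert rather than prove: that the $d_1$-differential ``identifies'' the $\Tor_0$-contribution of one copy with the $\Tor_1$-contribution of the next. A priori this induced map is just some $R$-linear endomorphism $g\colon L\to L$, i.e.\ multiplication by an element of $R/I$, and the whole lemma hinges on that element being a unit; nothing formal guarantees this (for an arbitrary degree $-1$ map in place of the homotopy $s_1$ it would be false, and then $Z^\bullet$ would not be a resolution). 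This is precisely where the paper does its work: it identifies $K^\bullet\otimes_R S$ with $L\otimes_R(R\xrightarrow{\;w\;}R)$ via the roof through $K^\bullet\otimes_R(R\xrightarrow{\;w\;}R)$ and transports the homotopy $s_1\otimes 1+1\otimes t$ (with $t=\mathrm{id}_R$) along the two legs, concluding that $g=\mathrm{id}_L$. Your parenthetical attributing the computation to the relation $w=\sum_i f_iw_i$ is also slightly misplaced: that relation is not what computes $\Tor^R_*(L,S)$ (projective dimension $1$ of $S$ over $R$ does that); it is what makes $s_1$ a null-homotopy for $w$ and is the input for identifying $g$.

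The gap is fillable by the ``explicit diagram chase'' you promise, and it is worth recording how: the class $\bar b\in\Tor_0$ maps under $s_1$ to $[\,\bar b\sum_i\bar w_ie_i\,]$, and using exactness of the Koszul complex over $R$ one checks this assignment is bijective onto $\Tor_1$ --- if $z\in\overline{\bigwedge^1 V}$ satisfies $s_0(z)=0$ in $S$, lift to $R$ and write $s_0(\tilde z)=wb$; then $\tilde z-b\,s_1(1)$ is an honest Koszul cycle over $R$, hence a boundary, giving surjectivity, and if $\bar b\,s_1(1)$ is a boundary mod $w$ one applies $s_0$ to a lift and uses that $R$ is a domain to get $b\in I$, giving injectivity. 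Either this chase or the paper's roof argument must actually appear; without it the proposal only restates the shape of the cancellation. Your alternative sketches (contracting homotopy, induction on $m$) are plausible but carry the same issue in disguise: the base case you state assumes $I=(w)$, whereas $m=1$ in general means $L=R/f_1$ with $w=f_1w_1$, and exactness of the resulting $2$-periodic complex again requires the small argument above rather than being tautological.
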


\begin{proof}
We use the spectral
sequence arising from the horizontal filtration to compute the cohomology of
$Z$. On the first page we obtain
\[
\xymatrix{
\ddots & L \ar[d]^{g} \\
& L & L \ar[d]^{g} \\
& & L & L 
}
\]
since the complex $K \otimes_R S$ is isomorphic in $\D^\text{b}(R)$ to the
complex $L \otimes_R (R \stackrel{w}{\lra} R)$ and $L$ is annihilated by $w$.
To determine the map $g$ we use the roof establishing the just mentioned
isomorphism.
\[
\xymatrix@!0@C=4cm@R=1.7cm{
& \ar[dl]_{\simeq}^{p_1} \ar[dr]^{\simeq}_{p_2} (\dots K^2 \stackrel{s_0}{\lra} K^1 \stackrel{s_0}{\lra} K^0) \otimes_R (R \stackrel{w}{\lra} R)
\\
(\dots K^2 \stackrel{s_0}{\lra} K^1 \stackrel{s_0}{\lra} K^0) \otimes_R S & & L \otimes_R (R \stackrel{w}{\lra} R)
}
\]
We introduce the homotopy
\[
\xymatrix{
R \ar[r]^w & \ar@{.>}@/^/[l]^t R\\
}
\]
which is simply the identity map on $R$. Then the map $s_1 \otimes 1 + 1 \otimes
t$ is a map of degree $-1$ on the complex forming the apex of the roof. The map
induced on $K \otimes_R S$ via $p_1$ is $s_1 \otimes 1$ while the one
induced on $L \otimes_R (R \to R)$ via $p_2$ is $1 \otimes t$. This proves that the
vertical maps $g$ on the first page of the above spectral sequence are in fact
given by the identity on $L$. Passing to the second page of the spectral sequence we immediately obtain
the result.
\end{proof}

\begin{cor} \label{cor.stab} The stabilization $L^{\stab}$ of $L$ is given by the matrix factorization 
\[
(\bigoplus_{i=0}^m {\textstyle\bigwedge\nolimits}^{i} V, \;s_0 + s_1 ) \text{.}
\]
\end{cor}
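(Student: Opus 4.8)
The plan is to deduce Corollary \ref{cor.stab} directly from the preceding lemma of Eisenbud, which identifies $Z^\bullet$ as an $S$-free resolution of $L$. Recall that by the discussion in Section \ref{sect.survey}, the stabilization $L^{\stab}$ is, by definition, the matrix factorization corresponding under the equivalence $\coker : [\MF(R,w)] \to \underline{\MCM}(S)$ to the maximal Cohen-Macaulay module obtained as a high enough syzygy of $L$; equivalently, it is read off from the eventually $2$-periodic part of any $S$-free resolution of $L$. So the task reduces to observing that the resolution $Z^\bullet$ constructed in the lemma becomes $2$-periodic, and that the periodic part is governed precisely by the matrix factorization $(\bigoplus_{i=0}^m \bigwedge^i V,\, s_0 + s_1)$.

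First I would examine the shape of the double complex defining $Z^\bullet$. Each row is a copy of $\overline{K^\bullet} = \bigwedge^\bullet V \otimes_R S$ with differential $\overline{s_0}$, placed in successively shifted homological positions, and the vertical maps are given by $\overline{s_1}$. Totalizing, the term of $Z^\bullet$ in a given degree is a direct sum of various $\overline{K^i}$, and for degrees past the ``top'' of the first row (i.e. past $\overline{K^m}$) the staircase stabilizes: the degree-$j$ term becomes $\bigoplus_{i=0}^m \overline{K^i}$ for all large $j$, with the differential acting as $\overline{s_0} + \overline{s_1}$ on the appropriate graded pieces. Grouping the $\bigwedge^i V$ with $i$ even into $\overline{X^0}$ and $i$ odd into $\overline{X^1}$, the tail of $Z^\bullet$ is exactly the $2$-periodic complex
\[
\cdots \longrightarrow \overline{X^1} \stackrel{\overline{\varphi}}{\longrightarrow} \overline{X^0} \stackrel{\overline{\psi}}{\longrightarrow} \overline{X^1} \stackrel{\overline{\varphi}}{\longrightarrow} \overline{X^0}
\]
attached to the matrix factorization $(X^0 \oplus X^1,\, s_0 + s_1)$ of $w$ described before the corollary. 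I would spell out the index bookkeeping just enough to make this grouping transparent, being careful that $s_0 + s_1$ does square to $w\cdot\id$ here because $s_0^2 = s_1^2 = 0$ and $s_0 s_1 + s_1 s_0 = w\cdot\id$ (the latter since $s_1$ is a contracting homotopy for multiplication by $w$ on $K^\bullet$), a point already implicit in the paragraph preceding the statement.

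Since by the lemma $Z^\bullet$ is an $S$-free resolution of $L$ and its tail coincides with the $2$-periodic resolution induced by the matrix factorization $(\bigoplus_{i=0}^m \bigwedge^i V,\, s_0+s_1)$, the high syzygies of $L$ are precisely the (co)kernels read off from this factorization, so the stabilization $L^{\stab}$ is this matrix factorization, as claimed. The one genuinely substantive point, rather than mere bookkeeping, is verifying that the totalization of the displayed double complex really does become honestly $2$-periodic after finitely many steps rather than merely ``eventually periodic up to the truncation artifacts at the edges''; I expect this to be the main obstacle, and I would handle it by writing the total differential in block matrix form on $\overline{X^0}\oplus\overline{X^1}$ and checking directly that for degrees $\ge m$ it is literally the block map alternating between $\overline{\varphi}$ and $\overline{\psi}$, with no surviving contribution from the boundary of the staircase. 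Everything else follows formally from Eisenbud's lemma together with the definition of $L^{\stab}$ recalled in Section \ref{sect.survey}.
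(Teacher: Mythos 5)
Your strategy is the same as the paper's (the paper's proof is exactly this inspection of $Z^\bullet$), but the key bookkeeping step is wrong as stated. You claim that for large $j$ the degree-$j$ term of the totalization is $\bigoplus_{i=0}^m \overline{K^i}$, i.e.\ all of $\overline{X^0}\oplus\overline{X^1}$. It is not: each successive row of the staircase is shifted by one \emph{column} and also sits one \emph{row} lower, so a fixed Koszul term appearing in consecutive rows occupies total degrees differing by two, not one. Consequently, for $d\ge m$ the degree-$d$ term of $Z^\bullet$ consists only of the terms $\overline{{\textstyle\bigwedge}^{i}V}$ with $i$ of a single parity (the parity alternating with $d$); that is, it is $\overline{X^0}$ or $\overline{X^1}$, of rank $2^{m-1}$, rather than $\overline{X^0}\oplus\overline{X^1}$ of rank $2^m$. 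A quick sanity check: for $R=k[[x]]$, $w=x^2$, $L=k$ (so $m=1$), the tail of $Z^\bullet$ has rank-one terms and is the minimal resolution $\cdots \to S \xrightarrow{x} S \xrightarrow{x} S \to k$, whereas your description would produce rank-two terms.

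This is not a harmless truncation artifact, because with your identification of the terms the tail would be the ``unfolded'' complex $\cdots \to \overline{X}\xrightarrow{\overline{s_0+s_1}}\overline{X}\to\cdots$, which is the direct sum of the $2$-periodic resolution and its shift; its high syzygies are $\coker(\overline{\varphi})\oplus\coker(\overline{\psi})$, and the argument would then identify $L^{\stab}$ with $X\oplus X[1]$ instead of $X$ --- so the block-matrix verification you single out as the crux would fail rather than confirm the claim. The repair is the parity-aware degree count above: in degree $d\ge m$ the total differential is $\overline{s_0+s_1}$ restricted to the parity-$d$ part, hence alternately $\overline{\varphi}$ and $\overline{\psi}$, and the tail is literally the $2$-periodic complex attached to $(\bigoplus_i\bigwedge^iV,\,s_0+s_1)$, which gives the corollary as in the paper. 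Your side remark that one needs $s_0s_1+s_1s_0=w\cdot\id$ (not just $s_0^2=s_1^2=0$) is correct and does hold, since contraction against $(f_i)$ and exterior multiplication by $(w_i)$ have anticommutator $\sum_i f_iw_i=w$.
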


\begin{proof}
	This simply follows by inspecting the explicit form of the constructed $S$-free
	resolution of $L$. It becomes $2$-periodic after $m$ steps where the
	$2$-periodic part is exactly the reduction modulo $w$ of the given
	matrix factorization.
\end{proof}

It is convenient to formulate this construction in the language of supergeometry as it is
often done in the physics literature.
The underlying space of $L^{\stab}$ can be interpreted as the superalgebra 
$R\left<\theta_1, \ldots, \theta_m\right>$ where $\theta_i$ are odd supercommuting variables 
and $R$ has degree $0$. The twisted differential defining the factorization corresponds to the odd differential
operator $\delta = \sum \delta_i$ with
\[
\delta_i = f_i \deli + w_i \theta_i \text{.}
\]
This interpretation is useful, since every $R$-linear endomorphism of the super polynomial ring
$R\left<\theta_1, \ldots, \theta_m\right>$ is represented by a differential operator, as can be easily seen by a dimension count. 
Thus, denoting the $\Z/2$-graded $R$-module of all polynomial differential
operators on $R\left<\theta_1, \ldots, \theta_m\right>$ by $A$, we obtain an explicit description 
of the dg algebra of endomorphisms of $L^{\stab}$ as
\[
\mfhom(L^{\stab}, L^{\stab}) \cong (A,\; [\delta, -]) 
\]
with $[\delta, \theta_i] = f_i$ and $[\delta, \frac{\partial}{\partial
\theta_i}]  = w_i$.

Finally, we point out that, ignoring the differential $\delta$, this setup is classical in the context of Clifford
algebras. Indeed, the graded
algebra $R\left<\theta_1, \ldots, \theta_m\right>$ is the exterior
algebra of the free $R$-module $B$ generated by the $\theta_i$ in odd degree. The
endomorphism algebra of this exterior algebra is well-known to be the
Clifford algebra of the associated hyperbolic form on $B \oplus B^*$ (cf.
\cite[IV, \S 2]{knus}). 
Denoting the elements in the basis of $B^*$ dual to $\{\theta_i\}$ by
$\{\frac{\partial}{\partial \theta_i}\}$, we arrive at our description of $A$ via
differential operators.

\subsection{Nonlocal hypersurfaces}
\label{nonlocal}

We remark that the theory described in this section generalizes to a
nonlocal situation where $R$ is a regular ring of finite Krull dimension.
For $w \in R$ we define $S=R/w$ and observe that the whole framework generalizes in the following
way. We define maximal Cohen-Macaulay modules to be $\Hom_S(-,S)$-acyclic
finitely generated $S$-modules. The matrix factorizations are defined in complete analogy to the
local case, with the exception that the two graded pieces of a factorization are
allowed to be finitely generated projective $R$-modules.
As already pointed out in \cite{buchweitz}, the important fact is that the
hypersurface algebra $S$ is of finite injective dimension over itself. Using  
the flatness of localization, we can generalize all statements in this section mutatis mutandis.

We use the same notations $\MF(R,w)$, $\MCM(S)$, etc. to also denote the
relevant categories and constructions in this more general situation.

\section{Singular and critical locus}
\label{sect.conditions}

This short section introduces various technical conditions which we will use
later on. 

\subsection{Local ambient ring}
\label{subsect.local}

We start by recalling a well-known result (see e.g. \cite[Proposition
(1.2)]{looijenga} for $k=\mathbb C$).

\begin{prop}\label{singeq}
	Let $k$ be a field of characteristic $0$. Consider a polynomial $w \in
	R = k[x_1, \dots, x_n]$. Let $Z$ be the scheme-theoretic zero locus of the $1$-form $dw$ on
	$\A^n$ (i.e. the critical locus of $w$) and denote the hypersurface
	algebra $R/w$ by $S$. Then the following are equivalent.
	\begin{enumerate}[(1)]
		\item\label{singeq.1} The hypersurface $\Spec(S)$ has 
			isolated singularities, i.e. $S_{\p}$ is regular
			for every non-maximal prime $\p \in \Spec(S)$.
		\item\label{singeq.2} The restriction of the critical locus $Z$
			to $\Spec(S)$ is a $0$-dimensional scheme.
		\item\label{singeq.3} For each singular point $\m \in \Spec(S)$,
			the restriction of the critical locus $Z$ to
			$\Spec(R_{\m})$ is a $0$-dimensional scheme.
	\end{enumerate}
\end{prop}

\begin{proof}
	The equivalence of (\ref{singeq.1}) and (\ref{singeq.2}) follows from the Jacobian
	Criterion (\cite[16.19]{eisenbud.comalg}). We are left to show that (\ref{singeq.2}) implies
	(\ref{singeq.3}). Assume (\ref{singeq.2}) holds but there exists a
	singular point $\m \in \Spec(S)$ such that the critical locus $Z$ does not 
	restrict to a $0$-dimensional subscheme of $\Spec(R_{\m})$. 
	Then there exists a non-maximal prime $\p \subset \m$ along which $dw$ vanishes. 
	We claim that $w$ must vanish along $\p$. Since $w$ vanishes on $\m$ it
	suffices to show that $w$ is constant along $\overline{\p}$. Indeed, if $w$ were
	non-constant it would map the irreducible variety $\overline{\p}$
	onto a dense subset $U$ of $\A^1$. But then, since $dw$ vanishes along
	$\overline{\p}$, the
	Jacobian Criterion would imply that every fiber of $w$ over $U$ is singular, contradicting generic
	smoothness (\cite[16.23]{eisenbud.comalg}). Hence, $w$ vanishes
	along $\p$. But then, again by the Jacobi Criterion, $S_{\p}$ is
	not regular which contradicts our assumption.
\end{proof}

For any pair $(R,w)$ given by a regular local ring $(R,\m, k)$ of Krull
dimension $n$ containing $k$ and a non-zero element $w \in \m$, we introduce the following
conditions. If there exists a sequence of $k$-linear derivations $\partial_1, \dots,
\partial_n$ of $R$ such that
\begin{equation}\label{is.sing}
\dim_k S/(\partial_1 w, \dots, \partial_n w) < \infty
\end{equation}
then we say $(R,w)$ has \emph{isolated singular locus}. If there exists a sequence of derivations $\partial_1, \dots,
\partial_n$ of $R$ such that
\begin{equation}\label{is.crit}
\dim_k R/(\partial_1 w, \dots, \partial_n w) < \infty
\end{equation}
then we say $(R,w)$ has \emph{isolated critical locus}.
Note that Proposition \ref{singeq} shows that these notions are
equivalent under the conditions stated. In general, this is not the case.

\subsection{Nonlocal ambient ring}
\label{subsect.nonlocal}

Let $k$ be a field, $R$ a $k$-algebra and $w \in R$. We will frequently refer to
the following condition on the pair $(R,w)$.

\begin{enumerate}[(A)]
	\item \label{a}
$R$ is essentially of finite type over $k$, equidimensional of dimension $n$, and the module of
K\"ahler differentials $\Omega_{R/k}$ is locally free of rank $n$.  
Further assume, that the restriction of the zero locus of $dw \in \Omega_{R/k}$ to $\Spec(R/w)$ is a
$0$-dimensional scheme supported on a unique closed point $\s$ of $\Spec(R/w)$
with residue field $k$.
\end{enumerate}

In particular, by \cite[II, 8.7]{hartshorne} the local ring $R_{\s}$ is regular,
and $(R_{\s},w)$ has isolated singular locus in the sense of (\ref{is.sing}).
Further, the $\s$-adic completion $(\widehat{R_{\s}},w)$ has isolated singular locus.
The main results of Sections \ref{sect.generator} and \ref{sect.app} will be
valid under the condition $(\ref{a})$.

In Section \ref{sect.morita}, we will employ the operation
\[
(R,w) \otimes_k (R',w') := (R \otimes_k R', w \otimes 1 + 1 \otimes w') \text{.}
\]
Unfortunately, condition (\ref{a}) is not necessarily preserved under this
operation and we therefore introduce the following stronger condition.

\begin{enumerate}[(A)]
	\setcounter{enumi}{1}
	\item\label{b}
$R$ is essentially of finite type over $k$, equidimensional of dimension $n$, and the module of
K\"ahler differentials $\Omega_{R/k}$ is locally free of rank $n$.  
Further assume, that the zero locus of $dw \in \Omega_{R/k}$ is a
$0$-dimensional scheme supported on a unique closed point $\s$ of $\Spec(R)$ with residue
field $k$ and $w \in \s$.
\end{enumerate}

In particular, $(R_{\s},w)$ has isolated critical locus in the sense of
(\ref{is.crit}) and the same is true for the $\s$-adic completion
$(\widehat{R_{\s}},w)$. Also note, that condition (\ref{b}) is preserved under
the above mentioned tensor product operation. 

\section{Generators in matrix factorization categories}
\label{sect.generator}

We fix a pair $(R,w)$ consisting of a regular local ring $(R,\m, k)$ of Krull
dimension $n$ containing $k$ and a non-zero element $w \in \m$. Throughout this section
we assume that $(R,w)$ has isolated singular locus (\ref{is.sing}). We
denote the hypersurface algebra $R/w$ by $S$.

In order to obtain a setup in which we can talk about compactness and
apply the technique of Bousfield localization (cf. \cite{neeman}), we are forced to enlarge the 
category of matrix factorizations to admit arbitrary coproducts. 
To this end, we use the category $\MF^\infty(R,w)$
of matrix factorizations of possibly infinite rank. By the existence of a
compact generator this category will simply turn out to be an explicit model for
the dg derived category of unbounded modules over $\MF(R,w)$ (see Theorem
\ref{equivalence}).

We introduce some general notions. Let $\mathcal{T}$ be a triangulated category
admitting infinite coproducts. Let $X$ be an object of $\mathcal{T}$. We call $X$
\emph{compact} if the functor $\Hom(X, -)$ commutes with infinite coproducts.
The object $X$ is a \emph{generator} of $\mathcal{T}$ if the smallest triangulated subcategory of $\mathcal{T}$
containing $X$ and closed under coproducts and isomorphisms is $\mathcal{T}$ itself. The full subcategory
of $\mathcal{T}$ consisting of all objects $Y$ satisfying $\Hom(X[i],Y) = 0$ for
all integers $i$ is called the \emph{right orthogonal complement of $X$}.
Using Bousfield localization, one shows that for a compact object $X$
the right orthogonal complement of $X$ is equivalent to $0$ if and only if $X$
is a generator of $\mathcal{T}$ (\cite[Lemma 2.2.1]{schwede}). 

Note that the full subcategory $[\MF(R,w)] \subset [\MF^\infty(R,w)]$ consists
of compact objects. This simply follows since both even and odd components of a
factorization $X$ in $\MF(R,w)$ are finitely generated $R$-modules. Thus, already on
the level of morphism complexes, $\MF(X, -)$ commutes with coproducts.
We can state the result which we prove in this section.

\begin{theo}
\label{generator}
Assume that $(R,w)$ has isolated singular locus (\ref{is.sing}) and consider the residue field $k$ 
as an $S$-module. Then $k^{\stab}$ is a compact generator of the
triangulated category $\left[ \MF^\infty(R,w) \right]$.
\end{theo}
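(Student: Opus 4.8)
The plan is to verify the two assertions—compactness and generation—separately, using the explicit model for $k^{\stab}$ from Corollary \ref{cor.stab} together with the orthogonality criterion recalled above. For compactness, I would note that $k^{\stab}$ is, by construction, a matrix factorization whose underlying $R$-module has \emph{finite} rank (it is $\bigoplus_{i=0}^{n}\bigwedge^{i}V$ with $V = R^{n}$, coming from the Koszul complex on $x_1,\dots,x_n$). For any finite-rank matrix factorization $X$ the functor $\Hom_{[\MF^{\infty}(R,w)]}(X,-)$ is computed on the chain level by a finite direct sum of copies of the $R$-modules underlying the target, shifted and with the twisted differential; since finite direct sums commute with arbitrary coproducts and cohomology commutes with coproducts of complexes, $\Hom(X[i],\coprod_j Y_j) \cong \coprod_j \Hom(X[i],Y_j)$. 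Hence every finite-rank factorization, in particular $k^{\stab}$, is compact. This step is routine.

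The substance is generation. By the cited \cite[Lemma 2.2.1]{schwede}, since $k^{\stab}$ is compact it suffices to show that its right orthogonal complement in $[\MF^{\infty}(R,w)]$ is zero: if $Y = (Y^0 \oplus Y^1, d_Y)$ is a (possibly infinite-rank) matrix factorization with $\Hom_{[\MF^{\infty}]}(k^{\stab}[i],Y) = 0$ for all $i \in \Z$, then $Y$ is contractible, i.e. $\id_Y$ is null-homotopic. The first move is to translate the hom-vanishing into a statement about modules. Passing through the equivalence $\coker \colon [\MF(R,w)] \xrightarrow{\ \sim\ } \underline{\MCM}(S)$ (which extends to the infinite-rank setting), $k^{\stab}$ corresponds to a high syzygy of the residue field $k$ over $S$, and the graded hom $\bigoplus_i \Hom(k^{\stab}[i],Y)$ computes (a $2$-periodic truncation of) $\underline{\Ext}^{*}_S(k, N)$ where $N = \coker(d_Y)$ is the maximal Cohen–Macaulay module attached to $Y$. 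So the hypothesis says $\underline{\Ext}^{*}_S(k,N) = 0$, and the goal becomes: an (infinitely generated) maximal Cohen–Macaulay $S$-module $N$ with $\underline{\Ext}^{*}_S(k,N) = 0$ is free. Equivalently—and this is the "homological Nakayama lemma" advertised in the introduction—if $\underline{\Ext}^{*}_S(k,N)$ vanishes then $N$ is $S$-free, hence zero in $\underline{\MCM}(S)$, hence $Y$ is contractible.

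The hard part is precisely this homological Nakayama lemma for infinitely generated MCM modules, since the ordinary Nakayama lemma fails without finite generation; this is where the isolated-singularity hypothesis must enter. The strategy I would pursue is as follows. First, reinterpret $\underline{\Ext}^{*}_S(k,N)$ in terms of stable homomorphisms from high syzygies: $\underline{\Ext}^{i}_S(k,N) \cong \underline{\Hom}_S(\syz^i k, N)$, and since $k$ has an eventually $2$-periodic resolution the module $\syz^i k$ for $i \gg 0$ is exactly $\coker$ of (a shift of) $k^{\stab}$. Vanishing for \emph{all} $i$ therefore gives $\underline{\Hom}_S(M, N) = 0$ where $M$ runs over the two modules $M^0, M^1$ appearing in the $2$-periodic resolution of $k$. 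Second—and this is the key input—the isolated-singularity condition forces $\syz^i k$ to be a module whose non-free locus is concentrated at $\m$; concretely, after inverting any $x_j$ the hypersurface becomes regular, so $M^0, M^1$ localize to free modules away from $\m$, and one shows $\Omega_w$ being finite-dimensional implies that $M^0 \oplus M^1$ generates $\underline{\MCM}(S)$ up to finite build—alternatively, that $\End_S(M^0 \oplus M^1)$ modulo projectives is finite-dimensional over $k$. Third, use the localization/Bousfield-completion argument hinted at in the introduction: write $N$ as a filtered colimit, or use that $\underline{\Hom}_S(M,-)$ commutes with the relevant colimits because $M$ is finitely generated MCM, to reduce the vanishing $\underline{\Hom}_S(M,N)=0$ to the statement $N/\m N = 0$, whence $N$ is $S$-free by a depth argument (an MCM module with $N \otimes_S k = 0$ in the stable category is free). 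I expect the delicate points to be (a) making the reduction "$\underline{\Hom}_S(\syz^i k, N) = 0$ for all $i$" genuinely yield information at the closed point despite $N$ being infinitely generated—this is exactly where one invokes that $\syz^i k$ detects the maximal ideal because of the isolated singularity—and (b) the bookkeeping of the $\Zt$-grading versus the $\Z$-graded Ext computation, which the introduction flags as the reason the naive argument of \cite{khovanov} does not directly apply. Once $N$ is shown free, $Y \cong 0$ in $[\MF^{\infty}(R,w)]$, the right orthogonal complement vanishes, and $k^{\stab}$ is a compact generator.
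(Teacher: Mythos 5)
Your treatment of compactness and the reduction via Bousfield localization (\cite[Lemma 2.2.1]{schwede}) to the vanishing of the right orthogonal complement matches the paper, and the translation of the hypothesis into stable-hom vanishing against the syzygies of $k$ is also in the spirit of the paper (which, via the self-duality of $k^{\stab}$ and Proposition \ref{homtor}, converts it into $\Tor^S_i(k,N)=0$ for $i>0$). But the heart of the theorem is the homological Nakayama lemma for infinitely generated modules, and here your proposal has a genuine gap: the endgame you sketch --- reduce $\underline{\Hom}_S(M,N)=0$ ``to the statement $N/\m N = 0$, whence $N$ is $S$-free by a depth argument'' --- does not make sense as stated. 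For infinitely generated modules the ordinary Nakayama lemma fails (this is precisely the difficulty the whole section is about), $N/\m N=0$ is not what one wants anyway (a nonzero free module has $N/\m N\neq 0$), and no mechanism is given by which the isolated-singularity hypothesis actually enters; the remarks about the non-free locus of $\syz^i k$ and about $\End$ modulo projectives being finite dimensional are not developed into an argument, and a colimit/continuity argument of the kind you suggest is not available (the paper's own counterexample $w=xy^2$, $N$ a syzygy of $k((x))$, shows that $\Tor^S_i(k,N)=0$ for $i>0$ alone, without isolatedness, proves nothing).

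The paper's actual route, which is absent from your proposal, is: (i) differentiate $\varphi\psi = w\id$ to see that each $\partial_k w$ acts null-homotopically on the factorization, so it annihilates $\Tor^S_i(N,M)$ for all $i>0$ (Lemma \ref{lem}); (ii) since $w$ is isolated, $\Omega_w$ is Artinian, so $\Tor$-vanishing against $k$ bootstraps to vanishing against all finitely generated $\Omega_w$-modules by a filtration argument, and then a double-complex spectral sequence argument (computing $S/\partial_k w \otimes^{\L}_S M \otimes^{\L}_S N$ two ways, iterated over the partial derivatives) upgrades this to $\Tor^S_i(M,N)=0$ for every finitely generated $S$-module $N$; (iii) the standard homological criterion then gives that $M$ is flat; (iv) the Gruson--Jensen theorem (flat $S$-modules have projective dimension at most $n-1$) together with the $2$-periodicity $M\cong\syz^{2k}(M)$ and Kaplansky's theorem yields that $M$ is free. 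Without steps (i), (ii) and (iv) --- in particular without some substitute for the Gruson--Jensen input, which is what ultimately replaces Nakayama in the non-finitely-generated setting --- your outline cannot be completed, so the proposal as it stands does not prove the theorem.
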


For the proof we need some preparation.

\subsection{The stabilized residue field}

We start by computing the matrix
factorization $k^{\stab}$ and its endomorphism dg algebra explicitly, using
Eisenbud's method. 
It can be applied in the form presented in \ref{stabilization} since the maximal ideal $\m$ is generated by the
regular sequence $x_1,\dots,x_n$.
Writing $w = \sum x_i w_i$, we obtain the factorization $k^{\stab}$ as
\[
(\bigoplus_{i=0}^n {\textstyle\bigwedge\nolimits}^{i} V, \;s_0 + s_1 ) \text{,}
\]
where $s_0$ denotes contraction with $(x_1,\dots,x_n)$ and $s_1$ is given by exterior
multiplication by $(w_1, \dots, w_n)$.

In supergeometric terms, $k^{\stab}$ is given by the superalgebra 
$R\left<\theta_1, \ldots, \theta_n\right>$ with odd differential
operator $\delta = \sum \delta_i$ with
\[
\delta_i = x_i \deli + w_i \theta_i \text{.}
\]
We denote the $\Z/2$-graded $R$-module of all polynomial differential operators
on $R\left<\theta_1, \ldots, \theta_n\right>$ by $A$ and obtain the description
\[
\MF(k^{\stab}, k^{\stab}) \cong (A,\; [\delta, -]) \text{.}
\]

We will now reduce the proof of Theorem \ref{generator} to a statement in homological
algebra. 

\subsection{Maximal Cohen-Macaulay approximations via matrix factorizations}

Let $\Mod(S)$ be the stable
category of $S$-modules. The objects are arbitrary $S$-modules whereas the
morphisms $\underline{\Hom}_S(M,N)$ are defined to be the quotient of $\Hom_S(M,N)$ by the two-sided ideal of
morphisms factoring through some free $S$-module. Analogously to the case of
finitely generated modules, the category $\left[ \MF^\infty(R,w) \right]$ is equivalent
to a full subcategory of $\Mod(S)$ which we denote by
$\underline{\MCM}^{\infty}(S)$. For our purposes, it is not necessary to give a
characterization of the class of $S$-modules which form the objects of
$\underline{\MCM}^{\infty}(S)$. We may think of them as generalized maximal
Cohen-Macaulay modules (cf. \cite{chen.xw}).

We will often use the following lemma which corresponds to Buchweitz's notion of
maximal Cohen-Macaulay approximation (cf. \cite{buchweitz}). 

\begin{lem}\label{mcm.approx}
	Let $X$ and $Y$ be objects
	in $\MF^{\infty}(R,w)$ and assume that $X$ has finite rank. Let $A$ be the endomorphism dg
	algebra of $X$ and assume that $Y$ is the
	stabilization of the $S$-module $L$. Then there exists a natural
	isomorphism
	\[
	\MF(X, Y) \cong \Hom_R^{\Zt}(X, L)
	\]
	in the category $\D(A^{\op})$.
\end{lem}
\begin{proof}
	We abbreviate the functor $-
	\otimes_R S$ by $\overline{-}$.	
	Let $Z$ be the $\Z$-graded total product complex of the double complex
	\[
	\xymatrix{
	& \vdots \ar[d]  & \ar[d] \vdots & \ar[d] \vdots & \\
\dots \ar[r] & \ar[d] \Hom( \overline{X^1}, \overline{Y^0}) \ar[r] &	\Hom( \overline{X^0}, \overline{Y^0}) \ar[d] \ar[r] & \ar[d] \Hom( \overline{X^1}, \overline{Y^0}) \ar[d] \ar[r] &  \dots\\
\dots \ar[r] & \ar[d] \ar[r] \Hom( \overline{X^1}, \overline{Y^1}) &	\Hom( \overline{X^0}, \overline{Y^1}) \ar[d] \ar[r] & \ar[d] \ar[r] \Hom( \overline{X^1}, \overline{Y^1}) \ar[d] \ar[r]  & \dots\\
\dots \ar[r] &  \ar[r] \Hom( \overline{X^1}, \overline{Y^0})       &	\Hom(
\overline{X^0}, \overline{Y^0})\ar[r]         &  \ar[r] \Hom( \overline{X^1},
\overline{Y^0})   \ar[r] & \dots
	}
	\]
	Since the complex $Z$ is $2$-periodic, we may think of it as a
	$\Zt$-graded complex $Z^{\Zt}$.
	Consider the natural map 
	\[
	\MF( X, Y) \lra Z^{\Zt}
	\]
	which is given by reducing modulo $w$ and extending $2$-periodically. We claim that this map is
	a quasi-isomorphism. We prove the surjectivity on $\H^0$.
	Assume an element of $\H^0(Z)$ is represented by the map of complexes
	\[
	\xymatrix{
	\dots \ar[r]  & \overline{X^0} \ar[d]^{f^2} \ar[r] & \overline{X^1} \ar[d]^{f^1} \ar[r] &
	\overline{X^0} \ar[d]^{f^0} \\
	\dots \ar[r] & \overline{Y^0} \ar[r] & \overline{Y^1} \ar[r] &
	\overline{Y^0} \text{.}
	}
	\]
	The collection $\{f^i\}$ induces a map
	$f$ between the maximal Cohen-Macaulay modules $M$ and $N$ which
	are resolved by $\overline{X}$ resp. $\overline{Y}$.
	Alternatively, we can lift $f$ to a map of complexes 
	\[
	\xymatrix{
	X^1 \ar[r]^{\varphi} \ar[d]_{\widetilde{f^1}} & X^0
	\ar[d]^{\widetilde{f^0}}\\
	Y^1 \ar[r]^{\varphi} & Y^0 \text{.}
	}
	\]
	Using the relation $\varphi \circ \psi = w$ one immediately checks that
	the maps ${ \widetilde{f^i} }$ also commute with $\psi$. Reducing
	${ \widetilde{f^i} }$ modulo $w$ and extending periodically, we obtain a map
	of complexes representing $f$ which therefore has to be homotopic to
	$\{f^i\}$. A similar argument shows injectivity on $\H^0$.

	Next, denote by $W$ the $\Z$-graded total product complex of the double complex
	\[
	\xymatrix{
	& \vdots \ar[d]  & \ar[d] \vdots & \ar[d] \vdots & \\
\dots \ar[r] & \ar[d] \ar[r] \Hom( \overline{X^1}, \overline{Y^1}) &	\Hom( \overline{X^0}, \overline{Y^1}) \ar[d] \ar[r] & \ar[d] \ar[r] \Hom( \overline{X^1}, \overline{Y^1}) \ar[d] \ar[r]  & \dots\\
\dots \ar[r] & \ar[d] \Hom( \overline{X^1}, \overline{Y^0}) \ar[r] &	\Hom( \overline{X^0}, \overline{Y^0}) \ar[d] \ar[r] & \ar[d] \Hom( \overline{X^1}, \overline{Y^0}) \ar[r] \ar[d] \ar[r] &  \dots\\
\dots \ar[r] & \ar[d] \Hom( \overline{X^1}, \overline{Y^{-1}}) \ar[r] &	\Hom( \overline{X^0}, \overline{Y^{-1}}) \ar[d] \ar[r] & \ar[d] \Hom( \overline{X^1}, \overline{Y^{-1}}) \ar[d] \ar[r] &  \dots\\
	& \vdots \ar[d]  & \ar[d] \vdots & \ar[d] \vdots & 
	\\
	\dots \ar[r] &  \ar[r] \Hom( \overline{X^1}, \overline{Y^{-r}})       &	\Hom(
	\overline{X^0}, \overline{Y^{-r}})\ar[r]         &  \ar[r] \Hom( \overline{X^1},
	\overline{Y^{-r}})  & \dots
	}
	\]
	where the complex 
	\[
	\xymatrix{
		\dots \ar[r] & \overline{Y^0} \ar[r] & \overline{Y^1} \ar[r] & \overline{Y^0}
		\ar[r] & \overline{Y^{-1}} \ar[r] & \dots \ar[r] & \overline{Y^{-r}}
		}
	\]
	is a resolution of the $S$-module $L$. Since by assumption $N$ is an
	even syzygy of $L$, such a resolution exists and $r$ is an even number.
	Again, due to $2$-periodicity, we may think of $W$ as a
	$\Zt$-graded complex $W^{\Zt}$. There is an obvious projection map
	\[
	p : \; W \to Z
	\]
	which we claim to be a quasi-isomorphism.
	This
	amounts to showing that any map of complexes
	\[
	\xymatrix{
	\dots \ar[r]  & \overline{X^0} \ar[d]^{f^2} \ar[r] & \overline{X^1} \ar[d]^{f^1} \ar[r] &
	\overline{X^0} \ar[d]^{f^0} \\
	\dots \ar[r] & \overline{Y^0} \ar[r] & \overline{Y^1} \ar[r] & \overline{Y^0}
	}
	\]
	can be extended, uniquely up to homotopy, to a map of complexes
	\[
	\xymatrix{
	\dots \ar[r]  & \overline{X^0} \ar[d]^{f^2} \ar[r] & \overline{X^1} \ar[d]^{f^1} \ar[r] &
	\overline{X^0} \ar[d]^{f^0} \ar[r] & \overline{X^1} \ar[d]^{f^{-1}} \ar[r] &
	\dots \ar[r] & \overline{X^0} \ar[d]^{f^{-r}}\\
	\dots \ar[r] & \overline{Y^0} \ar[r] & \overline{Y^1} \ar[r] &
	\overline{Y^0} \ar[r] & \overline{Y^{-1}} \ar[r] & \dots \ar[r] &
	\overline{Y^{-r}} \text{.}
	}
	\]
	Using the short exact sequence 
	\[
	0  \lra M \lra  \overline{X^1} \lra \syz(M) \lra 0
	\]
	one shows that the obstruction for the existence of the map $f^{-1}$
	lies in the space $\Ext^1(\syz(M), \overline{Y^{-1}})$ which vanishes
	since $\syz(M)$ is maximal Cohen-Macaulay. Here we use that $M$ (and 
	thus $\syz(M)$) is a finitely generated $S$-modules which implies that 
	$\Ext^i(\syz(M),-)$ commutes with coproducts. Further, one
	obtains that any two lifts differ by the pullback of a map in
	$\Hom_S(\syz(M),\overline{Y^{-1}})$. The obstruction for lifting this
	difference to a map in $\Hom_S(\overline{X^0},\overline{Y^{-1}})$ (which will define the
	desired homotopy) lies in $\Ext^1(M, \overline{Y^{-1}}) = 0$.
	An iteration of this argument leads to the desired extension which is
	unique up to homotopy.

	Finally, the complex $W^{\Zt}$ admits an augmentation map to the complex
	$\Hom^{\Zt}_R(X, L)$ which is a quasi-isomorphism by
	\cite[5.5.11]{weibel}.

	We conclude by noting that we have constructed quasi-isomorphisms of the form
	\[
	\MF(X, Y) \stackrel{\simeq}{\lra}  Z^{\Zt}
	\stackrel{\simeq}{\longleftarrow}
	W^{\Zt} \stackrel{\simeq}{\lra} \Hom^{\Zt}_R(X, L)\text{,}
	\]
	inducing the claimed isomorphism in the category $\D(A^{\op})$.
\end{proof}

The next proposition gives an explicit simple description of the map
\[
	\MF(X, Y) \cong \Hom_R^{\Zt}(X, L)
\]
in the case where $L$ is a complete intersection module.

\begin{prop} \label{exp.approx} Let $L$ be a complete intersection module as in
	\ref{stabilization}. Let $Y = (\bigoplus_{i=0}^m {\textstyle\bigwedge\nolimits}^{i} V, \;s_0 + s_1 )$ be the stabilization of $L$ constructed in
	Corollary \ref{cor.stab}. Define the map $g$ as the composition
	\[
	\bigoplus_{i=0}^m {\textstyle\bigwedge\nolimits}^{i} V \to
	{\textstyle\bigwedge\nolimits}^{0} V \to L \text{,}
	\] 
	where the first map is the projection and the second map is the natural
	quotient map realizing $(\bigoplus_{i=0}^m
	{\textstyle\bigwedge\nolimits}^{i} V, \;s_0)$ as an $R$-free resolution
	of $L$. Then post-composition by $g$ induces a quasi-isomorphism
	\[
	\MF(X, Y) \to \Hom_R^{\Zt}(X, L) \text{.}
	\]
\end{prop}

\begin{proof}
	Inspecting the explicit form of the $S$-free resolution of $L$ constructed in Lemma
	\ref{sfree}, one directly checks that a possible choice of the
	extension $f^{-r}$ constructed in the proof of Lemma \ref{mcm.approx} is
	obtained by post-composition with $g$.
\end{proof}

Note that in light of the general theory of MCM approximations developed in
\cite{buchweitz} the map $g$ is nothing else than an explicit description (on
the level of matrix factorizations) of the approximation map from the maximal
Cohen-Macaulay module corresponding to $Y$ to $L$.

\subsection{Reduction to the Homological Nakayama Lemma}

Given a factorization $X$ in $\MF(R,w)$ we can define a dual factorization
$X^\vee = \Hom_R^{\Zt}(X,R)$. Note however that, due to the usual Koszul signs rule, the
factorization $X^\vee$ is naturally an object of the category $\MF(R,-w)$.

\begin{prop} \label{residue.dual}
Let $Y$ be an object in $\MF^{\infty}(R,w)$ with corresponding $S$-module $N$ in
$\underline{\MCM}^\infty(S)$.
For all $i>0$, we have a natural isomorphism 
\[
[k^{\stab}[n+i],Y] \cong
\Tor_{i}^S(k, N) \text{,}
\]
where $n$ is the (parity of) the dimension of $R$. 
\end{prop}

\begin{proof}
	We apply Lemma \ref{mcm.approx} to obtain a quasi-isomorphism
	\[
	\MF(k^{\stab}[n+i],Y) \to \Hom^{\Zt}_R(k^{\stab}[n+i],N) \text{.}
	\]
	The right hand complex can be rewritten as
	\begin{equation}\label{tor.com}
	\overline{(k^{\stab}[n+i])^{\vee}} \otimes_S N \text{.}
	\end{equation}
	A direct inspection of the stabilization construction, using
	the fact that a Koszul complex is self-dual, yields that the
	factorization
	$(k^{\stab}[n])^{\vee}$ 
	in the category $\MF(R,-w)$ is a stabilization of the residue field $k$. Thus, if $M$ denotes the stable even
	syzygy of $k$, then the $0$-th cohomology of the complex \ref{tor.com}
	computes $\Tor^S_i(M,N)$. Next, observe that the modules $\Tor^S_i(M,N)$
	and $\Tor^S_i(k,N)$ are isomorphic for $i>>0$. But this means that they
	must be isomorphic for all positive $i$ since $N$ admits a $2$-periodic
	free resolution (and thus both $\Tor$-modules are $2$-periodic for
	positive $i$).
\end{proof}

Note that, since $\Tor_i^S(k,N)$ is $2$-periodic, vanishing for $i \in \{1,2\}$
implies vanishing for all $i > 0$.
Therefore, we reduced Theorem \ref{generator} to a statement which we might call
the \emph{Homological Nakayama Lemma}: a
module $N$ in $\underline{\MCM}^{\infty}(S)$ is free if and only if
$\Tor_i^S(k,N) = 0$ for all $i > 0$. For general infinitely generated
$S$-modules this statement is certainly false, but it turns out to be true for
modules in $\underline{\MCM}^{\infty}(S)$ if we assume that $(R,w)$ has isolated
singular locus. We will give the proof in the next subsection.

\subsection{The Homological Nakayama Lemma} \label{subsect.nakayama}

Consider a matrix factorization
\[
\xymatrix{ 
X^1 \ar@<+.5ex>[r]^{\varphi} & \ar@<+.5ex>[l]^{\psi} X^0
}
\]
where $X^0$ and $X^1$ are free $R$-modules of possibly infinite rank and let 
$M = \coker(\varphi)$ be the corresponding $S$-module. As already explained, applying $- \otimes_R
S$ to $X$ and extending periodically one obtains an $S$-free resolution 
\[
\xymatrix@C=0.7cm{ 
 \dots \ar[r] & \overline{X^0} \ar[r]^{d} &  \overline{X^1} \ar[r]^{d} &
\ar[r] \overline{X^0} & M \ar[r] & 0
}
\]
of $M$.
The formula $\partial_k w = \partial_k (\varphi \psi) = \partial_k (\varphi)
\psi + \varphi \partial_k (\psi)$ establishes that multiplication by $\partial_k
w$ is homotopic to zero on the endomorphism complex of $X$. Interpreting this fact in
terms of the resolution $\overline{X}$ of $M$ one easily deduces the following
fundamental observation.

\begin{lem} \label{lem} For any $S$-module $N$, multiplication by $\partial_k w$ annihilates the $S$-module $\Tor^S_i(N,M)$ for all $i>0$.
\end{lem}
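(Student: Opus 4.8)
The statement to prove is Lemma \ref{lem}: for any $S$-module $N$ and any $i>0$, multiplication by $\partial_k w$ kills $\Tor^S_i(N,M)$, where $M = \coker(\varphi)$ for a matrix factorization $(X^\bullet,\varphi,\psi)$ of possibly infinite rank. The plan is to compute $\Tor^S_\bullet(N,M)$ using the explicit $2$-periodic $S$-free resolution $\overline{X}^\bullet$ of $M$ described just above the lemma, and to exhibit the multiplication-by-$\partial_k w$ map on this resolution as a chain map that is null-homotopic through a homotopy coming from the Leibniz-rule identity $\partial_k w = \partial_k(\varphi)\,\psi + \varphi\,\partial_k(\psi)$.

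The key steps, in order, are as follows. First, observe that multiplication by $\partial_k w$ on the $\Zt$-graded module $X^\bullet$ is chain-homotopic to zero: the homotopy is given by $h = \partial_k(\varphi)$ in one degree and $h = \partial_k(\psi)$ in the other (i.e.\ $h$ is the odd endomorphism of $X$ whose components are $\partial_k\varphi$ and $\partial_k\psi$), and the identity $d h + h d = \partial_k w \cdot \id$ is precisely the differentiated relation $\partial_k(\varphi\psi) = \partial_k(\psi\varphi) = \partial_k w \cdot \id$ (using $d^2 = w\,\id_X$, so that $d$ on the endomorphism complex encodes exactly the commutator, but here it is simpler to just write the $2\times 2$ block identities directly on $\varphi,\psi$). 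Second, reduce mod $w$: applying $-\otimes_R S$, the same formulas give a chain homotopy on the $S$-free complex $\overline{X}^\bullet$ showing that multiplication by $\partial_k w$ (equivalently by its image $\overline{\partial_k w}\in S$) is null-homotopic as an endomorphism of the resolution $\overline{X}^\bullet \to M$. Hence multiplication by $\partial_k w$ on $M$, viewed as a self-map of the object $M$ in the derived category, is zero — or more to the point, it is covered by a null-homotopic endomorphism of the chosen free resolution. Third, tensor with $N$: since $\Tor^S_i(N,M)$ is computed as $H_i(N\otimes_S \overline{X}^\bullet)$, and a null-homotopy of a chain endomorphism of $\overline{X}^\bullet$ tensors to a null-homotopy of the induced chain endomorphism of $N\otimes_S\overline{X}^\bullet$, the induced map on $H_i$ is zero for every $i$. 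But the induced map on $H_i(N\otimes_S\overline{X}^\bullet) = \Tor^S_i(N,M)$ is exactly multiplication by $\partial_k w$, since $\partial_k w$ acts $S$-linearly and $\Tor$ is an $S$-module. This holds for all $i\ge 0$; the hypothesis $i>0$ is not even needed for this argument (for $i=0$ one has $\Tor_0 = N\otimes_S M$, and $\partial_k w$ already annihilates $M$ because $M$ is an $S/(\ldots)$-module — indeed $M$ is annihilated by $w$ as an $R$-module, and $\partial_k w\in\m$ maps $M$ into $\m M$, which is why the statement is stated for $i>0$ where it is the interesting homological assertion).

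The main obstacle — really the only subtlety — is keeping the bookkeeping of the $\Zt$-grading and the $2$-periodic extension straight: one must check that the homotopy $h$, built from $\partial_k\varphi$ and $\partial_k\psi$ in alternating degrees, is genuinely compatible with the periodic extension of the differential $d$ on $\overline{X}^\bullet$, so that $dh + hd = \overline{\partial_k w}\cdot\id$ holds in \emph{every} homological degree, not just in the two degrees where the factorization lives. This is routine but is the place where a sign or an index shift could go wrong; once it is verified, tensoring with $N$ and passing to homology is formal. I would organize the write-up around the single identity $d h + h d = \partial_k w\cdot\id$ on $X^\bullet$, reduce mod $w$, tensor with $N$, and conclude.
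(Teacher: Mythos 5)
Your argument is exactly the paper's: the Leibniz identity $\partial_k w = \partial_k(\varphi)\psi + \varphi\,\partial_k(\psi)$ gives an odd endomorphism $h$ of $X$ with $dh+hd=\partial_k w\cdot\id$, which reduces mod $w$, extends $2$-periodically, and after tensoring with $N$ kills the homology of the resolution in positive degrees; so for the statement as given ($i>0$) your proof is correct and coincides with the one in the paper.

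One correction to your side remarks, though: it is not true that the argument works for $i=0$, nor that $\partial_k w$ annihilates $M$. On the $2$-periodic ($\Zt$-graded) complex the identity $dh+hd=\partial_k w$ holds in every degree, but the $S$-free resolution of $M$ is its truncation to degrees $\ge 0$, and in homological degree $0$ there is no $h_{-1}$: one only gets $d_1h_0=\overline{\varphi\,\partial_k\psi}$, not $\partial_k w\cdot\id$. So multiplication by $\partial_k w$ is \emph{not} null-homotopic as an endomorphism of the resolution, only homotopic to a map supported in degree $0$; this is precisely why the conclusion is restricted to $i>0$. Indeed $\partial_k w$ need not annihilate $M=\Tor_0^S(S,M)$: for $w=x^2+y^2$ with $\varphi=x+iy$, $\psi=x-iy$ one has $M\cong R/(x+iy)$, on which $\partial_x w=2x$ acts nontrivially (your assertion ``$\partial_k w\in\m$ maps $M$ into $\m M$'' is true but does not give annihilation). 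None of this affects the lemma itself, since for a cycle $z$ in degree $i\ge 1$ one has $\partial_k w\cdot z = d(h z)$ and the positive-degree $\Tor$'s are killed as you argue.
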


We will use the following result from \cite{jensen}. 

\begin{theo}[Gruson, Jensen] \label{pd} The projective dimension of an arbitrary flat $S$-module is at most
$n-1$.
\end{theo}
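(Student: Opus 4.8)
The statement to prove is: the projective dimension of an arbitrary flat $S$-module is at most $n-1$, where $n = \dim R$. Since this is quoted from \cite{jensen} the intended proof is surely a citation, but here is how I would reconstruct it. The plan is to exploit the hypersurface structure $S = R/w$ together with the fact that $R$ is a regular local ring of Krull dimension $n$, hence has global dimension $n$. The key classical input is the Gruson--Jensen--Raynaud theory of pure-projective and flat modules: over a Noetherian ring of finite Krull dimension, every flat module has finite projective dimension, and more precisely a flat module over a ring with $\operatorname{Spec}$ of Krull dimension $d$ has projective dimension at most $d$ (this refines Jensen's earlier bound in terms of $\aleph$-cardinality).

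First I would recall that $R$ has global dimension $n$, so every flat (equivalently, by Gruson--Jensen over a Noetherian ring, every $R$-module whatsoever) has $R$-projective dimension at most $n$. Next, given a flat $S$-module $F$, I would view $F$ as an $R$-module; it need not be $R$-flat, but it is annihilated by $w$, and one can analyze its $R$-projective resolution. The cleanest route is via change of rings: there is a standard spectral sequence (or the long exact sequence coming from $0 \to R \xrightarrow{w} R \to S \to 0$) relating $\operatorname{Tor}^S$ and $\operatorname{Tor}^R$. Concretely, for a flat $S$-module $F$ one has $\operatorname{Tor}_i^S(F, S/\m) = 0$ for all $i > 0$ since $F$ is flat, so projective dimension is detected at the residue field; then one uses the Auslander-Buchsbaum-type estimate or the rigidity of Tor over hypersurfaces to bound $\operatorname{pd}_S F$ by comparing $S$-resolutions with the ($n$-bounded) $R$-resolution. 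The bound $n-1$ rather than $n$ reflects that over the hypersurface $S$ the "extra" dimension has been used up by the relation $w$: a flat $S$-module, being $R$-module of $R$-projective dimension $\le n$ and $w$-torsion, acquires an $S$-free resolution of length $\le n - 1$ by the same mechanism that makes maximal Cohen-Macaulay $S$-modules have periodic (hence, after truncation, length-controlled) resolutions.

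The technical heart, and the step I expect to be the main obstacle, is establishing that flatness over $S$ forces finite projective dimension at all — i.e. the Gruson--Jensen theorem itself, which is genuinely nontrivial for non-finitely-generated modules: flat modules over a Noetherian ring need not be projective, and controlling their projective dimension requires either the structure theory of flat modules as filtered colimits together with a careful transfinite dévissage, or Raynaud--Gruson's characterization of projectivity via Mittag-Leffler conditions. For the purposes of this paper it is legitimate simply to invoke \cite{jensen}, so I would state the theorem and cite it; if a self-contained argument were wanted, I would reduce to the local case, use that $S$ has finite Krull dimension $n-1$ (wait—$\dim S = n-1$ since $w$ is a nonzerodivisor in the regular ring $R$), and then apply the general principle that over a Noetherian ring of Krull dimension $d$ every flat module has projective dimension $\le d$; with $d = \dim S = n - 1$ this gives exactly the asserted bound. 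So in fact the bound $n-1$ is just the Gruson--Jensen bound $\operatorname{pd} \le \dim S$ applied to the hypersurface, and the only thing to check beyond the citation is the elementary fact $\dim S = \dim R - 1 = n - 1$.
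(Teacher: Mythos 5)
The paper offers no proof of this statement: it is quoted directly from Gruson--Jensen \cite{jensen}, and your final paragraph identifies the correct reading of that citation --- the general theorem that a flat module over a commutative Noetherian ring of finite Krull dimension $d$ has projective dimension at most $d$, applied with $d = \dim S = \dim R - 1 = n-1$ (as $w \neq 0$ is a nonzerodivisor in the regular local ring $R$). One caution about the detour you yourself discard: the assertion that vanishing of $\operatorname{Tor}^S_i(F,k)$ for $i>0$ \emph{detects} finite projective dimension is false for modules that are not finitely generated --- that failure is exactly why the paper needs the isolated-singularity hypothesis in Theorem \ref{nakayama} --- but since your conclusion rests on the citation and the computation of $\dim S$ rather than on that step, it does not affect the correctness of your answer.
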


Recall that the \emph{Tyurina algebra} is defined to be $\Omega_w = S/(\partial_1 w , \dots, \partial_n w)$.

\begin{theo}
\label{nakayama}
Let $X$ be a matrix factorization of possibly infinite rank and let $M =
\coker(\varphi)$. Then the following are equivalent:
\begin{enumerate}[(1)]
\item $M$ is a free $S$-module.
\item $M$ is a flat $S$-module.
\item $\Tor^S_i(N,M) = 0$ for every finitely generated $S$-module $N$ and $i>0$.
\item $\Tor^S_i(N,M) = 0$ for every finitely generated $\Omega_w$-module $N$ and $i>0$.
\end{enumerate}
If $w$ has an isolated singularity then the above are equivalent to 
\begin{enumerate}[(1)]
\item[(5)] $\Tor^S_i(k,M) = 0$ for $i>0$.
\end{enumerate}
\end{theo}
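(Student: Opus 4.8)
The plan is to prove the chain of implications $(1) \Rightarrow (2) \Rightarrow (3) \Rightarrow (4) \Rightarrow (1)$, and then, under the isolated singularity hypothesis, to close the loop $(5) \Rightarrow (3)$ (the reverse implication $(3) \Rightarrow (5)$ being trivial). The implication $(1) \Rightarrow (2)$ is immediate since free modules are flat. For $(2) \Rightarrow (3)$, I would invoke Theorem \ref{pd} (Gruson--Jensen): if $M$ is flat, then $\pd_S(M) \le n-1 < \infty$, so $M$ has a finite free resolution of length at most $n-1$. But $M = \coker(\varphi)$ also admits the $2$-periodic infinite $S$-free resolution $\overline{X}$. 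Comparing, the only way a module with a $2$-periodic resolution can have finite projective dimension is if that resolution is eventually split, forcing $\Tor^S_i(N,M) = 0$ for $i > 0$ for \emph{all} $S$-modules $N$, in particular all finitely generated ones. (Concretely: $\Tor^S_i(N,M)$ is both eventually $2$-periodic, by using $\overline{X}$, and eventually zero, by using the finite resolution; hence it vanishes for all $i > 0$.) The implication $(3) \Rightarrow (4)$ is trivial, as an $\Omega_w$-module is in particular an $S$-module via $S \twoheadrightarrow \Omega_w$.

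The main content is $(4) \Rightarrow (1)$. Here I would argue as follows. First, by Lemma \ref{lem}, each $\partial_k w$ annihilates $\Tor^S_i(N,M)$ for every $S$-module $N$ and $i > 0$; hence $\Tor^S_i(N,M) = \Tor^S_i(N/(\partial_1 w, \ldots, \partial_n w)N, M)$ in an appropriate sense — more precisely, for any finitely generated $S$-module $N$, the $S$-module $\Tor^S_i(N,M)$ is naturally a module over $\Omega_w = S/(\partial_1 w, \ldots, \partial_n w)$. Now take an arbitrary finitely generated $S$-module $N$ and a short exact sequence $0 \to N' \to S^r \to N \to 0$; since $\Tor^S_i(S^r, M) = 0$ for $i > 0$, the long exact sequence gives $\Tor^S_{i+1}(N,M) \cong \Tor^S_i(N', M)$. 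Iterating, I reduce the vanishing of $\Tor$ against arbitrary finitely generated $S$-modules to the vanishing against finitely generated modules that happen to be annihilated by the $\partial_k w$, i.e. finitely generated $\Omega_w$-modules — this is exactly what hypothesis (4) provides, so $(4)$ upgrades to: $\Tor^S_i(N,M) = 0$ for \emph{every} finitely generated $S$-module $N$ and $i > 0$. In particular $\Tor^S_i(k,M) = 0$ for $i > 0$. Then $M$ has finite flat dimension when tested against finitely generated modules; combined with the $2$-periodic resolution $\overline{X}$, the periodic resolution must split, so $M \cong \coker(\varphi)$ is a direct summand of the free module $\overline{X^0}$, hence projective. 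A projective module over the (not necessarily Noetherian, but) local ring $S$ is free — here I would cite the standard fact (Kaplansky) that projective modules over local rings are free, or alternatively note that $\overline{X^1} \xrightarrow{\overline{\psi}} \overline{X^0} \to M \to 0$ together with splitting forces the matrix factorization to be trivial, whence $M$ is free.

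For the final equivalence under the isolated singularity assumption, I need $(5) \Rightarrow (4)$ (or directly $(5) \Rightarrow (3)$). The point is that when $w$ has an isolated singularity, $\Omega_w$ is a \emph{finite-dimensional} $k$-algebra, in fact a finite-dimensional local $k$-algebra with residue field $k$ and maximal ideal nilpotent. Therefore every finitely generated $\Omega_w$-module has a finite filtration whose subquotients are isomorphic to $k$. Using the long exact sequences for $\Tor^S_\bullet(-, M)$ associated to such a filtration, the vanishing $\Tor^S_i(k, M) = 0$ for $i > 0$ propagates to $\Tor^S_i(N, M) = 0$ for every finitely generated $\Omega_w$-module $N$ and $i > 0$; this is precisely condition (4). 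Combined with the already-established chain, all five conditions are equivalent.

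\medskip

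\noindent\textbf{Where the difficulty lies.} The routine parts are the filtration argument for $(5) \Rightarrow (4)$ and the dévissage $(4) \Rightarrow$ ``$\Tor$ against all finitely generated $N$ vanishes'', which is a clean induction using Lemma \ref{lem}. The genuine obstacle is the structural step ``finite $\Tor$-dimension plus a $2$-periodic resolution forces splitting/freeness'', and in particular making sure it works for modules of \emph{infinite} rank, where $M$ need not be finitely generated and $S$ need not be Noetherian in the enlarged setting. I expect one must argue carefully that the map $\overline{X^1} \xrightarrow{\overline{\varphi}} \overline{X^0}$ is a split injection: this follows once one knows $\Tor^S_1(S/\m, M) = \Tor^S_1(k, M) = 0$ forces $M$ to be $\m$-torsion-free in the appropriate sense and then lifts the splitting, but the bookkeeping with infinite-rank free modules and the interaction with Gruson--Jensen (which is stated for flat modules, so is only available \emph{after} one knows flatness, not before) is the delicate point. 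I would be careful to route the logic so that Gruson--Jensen is used only in $(2) \Rightarrow (3)$, and that $(4) \Rightarrow (1)$ instead derives freeness from the vanishing of $\Tor^S_i(k,M)$ directly via the splitting of the periodic resolution, without circularly invoking flatness.
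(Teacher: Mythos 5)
Your overall skeleton (chain of implications, dévissage from $k$ to $\Omega_w$-modules, flatness criterion) matches the paper's, and your $(5)\Rightarrow(4)$ filtration argument is fine. But the central step is not. Your reduction of (4) to ``$\Tor^S_i(N,M)=0$ for all finitely generated $S$-modules $N$'' by dimension shifting does not work: the short exact sequence $0\to N'\to S^r\to N\to 0$ replaces $N$ by a syzygy, and syzygies over $S$ of an arbitrary finitely generated module are eventually maximal Cohen--Macaulay, hence (when $\dim S>0$) never annihilated by $\partial_1 w,\dots,\partial_n w$; so iterating never lands you in the class of $\Omega_w$-modules where hypothesis (4) applies. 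Knowing from Lemma \ref{lem} that the groups $\Tor^S_i(N,M)$ are $\Omega_w$-modules is not the same as being allowed to replace the coefficient module $N$ by an $\Omega_w$-module. This is exactly the hard point of the theorem, and the paper's proof of $(4)\Rightarrow(3)$ is a genuine spectral-sequence argument: for each fixed $\partial w=\partial_k w$ one computes $\H^*(S/\partial w\otimes^{\L}_S M\otimes^{\L}_S N)$ from the two filtrations of an explicit double complex built out of the $2$-periodic resolution $\overline{X}\otimes_S N$. The vertical filtration, combined with Lemma \ref{lem} (which kills the $E_1$-differentials $\pm\partial w$ on the $\Tor$-rows), shows that $\Tor^S_i(M,N)=0$ for $i>0$ if and only if $\H^j=0$ for $j\ge 2$; the horizontal filtration shows $\H^j=0$ for $j\ge2$ as soon as $\Tor^S_i(M,-)$ vanishes on the finitely generated $S/\partial w$-modules $N/\partial w$ and $\Tor^S_1(N,S/\partial w)$; iterating over all the partial derivatives gives $(4)\Rightarrow(3)$. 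Some argument of this kind is indispensable and is missing from your proposal.

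The second gap is the passage to freeness. You propose to avoid Gruson--Jensen there and split the $2$-periodic resolution directly from $\Tor^S_1(k,M)=0$ (``$\m$-torsion-free \dots lift the splitting''); no such argument can exist, since the paper's counterexample ($R=k[[x,y]]$, $w=xy^2$) produces a cokernel $M$ of an infinite-rank factorization with $\Tor^S_i(k,M)=0$ for all $i>0$ that is not free. Your placement of Theorem \ref{pd} is also backwards: $(2)\Rightarrow(3)$ needs no input at all (flatness means exactly that $\Tor^S_{>0}(-,M)$ vanishes), whereas Gruson--Jensen is precisely what is needed, and is not circular, in $(2)\Rightarrow(1)$: flatness gives finite projective dimension, the $2$-periodic resolution gives $M\cong\syz^{2j}(M)$, hence $M$ is projective, and Kaplansky's theorem over the local ring $S$ gives freeness. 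So the correct routing is $(3)\Rightarrow(2)$ by the standard finitely generated flatness criterion and then $(2)\Rightarrow(1)$ as above, with the real content concentrated in $(4)\Rightarrow(3)$.
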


\begin{proof}
The implications (1) $\Rightarrow$ (2) $\Rightarrow$ (3) $\Rightarrow$ (4)
$\Rightarrow$ (5) are obvious.\\

\noindent
(2) $\Rightarrow$ (1): The module $M$ is flat and has therefore finite
projective dimension by Theorem \ref{pd}. This implies that $\syz^k(M)$ is
projective for $k >> 0$. Since $M$ has a $2$-periodic resolution, we have $M
\cong \syz^k(M)$ for every even natural number $k$. 
So $M$ is projective and since $S$ is local Kaplansky's theorem implies that $M$ is free.\\

\noindent
(3) $\Rightarrow$ (2): This is the standard homological criterion for
flatness.\\

\noindent
(4) $\Rightarrow$ (3): Let us fix a non-zero partial derivative $\partial w =
\partial_k w$ and let $N$ be a finitely generated $S$-module. Since $R$ a
regular local ring, it is in particular an integral domain and thus
multiplication by $\partial w$ is injective on $R$.
We compute the cohomology of the complex 
\[
R/\partial w \otimes^{\L}_R M \otimes^{\L}_S N
\]
in two different ways. Namely, this complex is
quasi-isomorphic to the total complex of the double complex
\[
\xymatrix{
\vdots \ar[d]  & \ar[d] \vdots & \ar[d] \vdots & \\
\overline{X^0} \otimes_S N \ar[d]^{d} & \ar[d]^{d} \ar[l]_{\partial w}
\overline{X^0} \otimes_S N & 0 \ar[d] \ar[l] & \ar[l] \dots\\
\overline{X^1} \otimes_S N \ar[d]^{d} & \ar[d]^{d} \ar[l]_{-\partial w}
\overline{X^1} \otimes_S N & 0 \ar[d] \ar[l] & \ar[l] \dots\\
\overline{X^0} \otimes_S N  &  \ar[l]_{\partial w}
\overline{X^0} \otimes_S N & 0 \ar[l] & \ar[l] \dots
}
\]
which we may filter horizontally as well as vertically. Both filtrations lead to
spectral sequences converging strongly to the target $\H^*(R/\partial w \otimes^{\L}_R M
\otimes^{\L}_S N)$.
The vertical filtration leads to a spectral sequence with $E^1$ given by 
\[
\xymatrix{
\vdots  & \vdots & \vdots \\
\Tor^S_2(M,N) & \ar[l]_{\partial w}
\Tor^S_2(M,N) & 0 \ar[l] & \dots \\
\Tor^S_1(M,N)  &  \ar[l]_{-\partial w} \Tor^S_1(M,N) & 0 \ar[l] & \dots \\
M \otimes_S N  &  \ar[l]_{\partial w} M \otimes_S N & 0 \ar[l] & \dots }
\]
It degenerates at $E^2$ which, using Lemma \ref{lem}, is given by
\[
\xymatrix{
\vdots  & \vdots & \vdots & \\
\Tor^S_2(M,N) & 
\Tor^S_2(M,N) & 0 & \dots \\
\Tor^S_1(M,N)  &  \Tor^S_1(M,N) & 0 & \dots  \\
M \otimes_S N/\partial w  &  \Tor^S_1( M \otimes_S N, S/\partial w) & 0 & \dots
}
\]
This implies that $\Tor^S_i(M,N) = 0$ for $i > 0$ if and only if $\H^j(R/\partial
w \otimes^{\L}_R M \otimes^{\L}_S N) = 0$ for $j \ge 2$.\\
Using the horizontal filtration of the above double complex we obtain a spectral
sequence with first page 
\[
\xymatrix{
\vdots \ar[d]  & \ar[d] \vdots & \ar[d] \vdots \\
\overline{X^0} \otimes_S N/\partial w \ar[d]^{d} & \ar[d]^{d} \overline{X^0}
\otimes_S \Tor^R_1( N, R/\partial w) & 0 \ar[d] & \dots \\
\overline{X^1} \otimes_S N/\partial w \ar[d]^{d} & \ar[d]^{d} \overline{X^1}
\otimes_S \Tor^R_1( N, R/\partial w) & 0 \ar[d] & \dots \\
\overline{X^0} \otimes_S N/\partial w  & \overline{X^0} \otimes_S \Tor^R_1( N,
R/\partial w) & 0 & \dots \\
}
\]
and $E^2$ given by
\[
\xymatrix{
\vdots & \vdots & \vdots \\
\Tor^S_2(M, N/\partial w) \ar[ddr]  & \Tor^S_2(M, \Tor^R_1( N, R/\partial w)) &
0  & \dots \\
\Tor^S_1(M, N/\partial w)  & \Tor^S_1(M, \Tor^R_1( N, R/\partial w)) & 0 & \dots  \\
M \otimes_S N/\partial w  & M \otimes_S \Tor^R_1( N, R/\partial w) & 0 & \dots \\
}
\]
Observe that both $S$-modules $N/\partial w$ and $\Tor^R_1( N, R/\partial w)$
are finitely generated and annihilated by $\partial w$. This allows us to
conclude that $\H^j(R/\partial w \otimes^{\L}_R M \otimes^{\L}_S N) = 0$ for $j \ge 2$ if $\Tor^S_i(M, -)$ vanishes
on all finitely generated $S/\partial w$-modules for $i > 0$.

Applying this construction iteratively to all non-zero partial derivatives of $w$ yields the implication.\\

\noindent
(5) $\Rightarrow$ (4):
These are standard arguments for modules over Artinian rings. Let $N$ be a
finitely generated $\Omega_w$-module. It admits a finite filtration with successive quotients
isomorphic to cyclic $\Omega_w$-modules. If $\Omega_w/I$ is such a cyclic
module then we obtain a short exact sequence
\[
\xymatrix{
0 \ar[r] & K \ar[r] & \Omega_w/I \ar[r] & \Omega_w/\m \ar[r] & 0 
}
\]
where by assumption $\Omega_w/\m \cong k$. The claim follows inductively from inspection of the
associated long exact $\Tor$-sequence. 
\end{proof}

\subsection{A counterexample}
An example of a non-isolated singularity for which the Homological Nakayama Lemma does not
hold can be constructed as follows. We consider the local $k$-algebra $R = k[ [x,y]]$
with $w=xy^2$. The hypersurface algebra is given by $S = R/xy^2$
and we consider the $S$-module of formal Laurent series $k( ( x))$. Consider the matrix factorization with underlying $\Zt$-graded $R$-module
\begin{align*}
X^1 &= \bigoplus_{i \in \Z} R h_i \oplus \bigoplus_{i \in \Z} R e_i\\
X^0 &= \bigoplus_{i \in \Z} R f_i \oplus \bigoplus_{i \in \Z} R g_i 
\end{align*}
with $\varphi$ given by the assigments
\begin{align*}
h_i & \mapsto y f_i - x g_i + g_{i+1}\\
e_i & \mapsto xy g_i
\end{align*}
and $\psi$ defined by
\begin{align*}
f_i & \mapsto xy h_i + x e_i - e_{i+1}\\
g_i & \mapsto y e_i \text{.}
\end{align*}
The cokernel $M$ of $\varphi$ is a first syzygy module of the
$S$-module $k( ( x))$. In fact, the factorization was found by imitating
Eisenbud's method which in general only works for finitely generated modules.
Using the factorization and the resulting $2$-periodic $S$-free resolution of
$M$ one checks that $\Tor_i^S(k,M) = 0$ for all $i > 0$. But $M$ is not free and
in fact, consistently with Theorem \ref{nakayama}, we have $\Tor_i^S(\Omega_w, M) \ne
0$ for both $i = 1$ and $i = 2$.

\subsection{Localization}
\label{subsect.localization}

In this section we generalize the generation results of \ref{subsect.nakayama}
to nonlocal ambient rings. 
Let $(R,w)$ be as in \ref{subsect.nonlocal}, satisfying condition (\ref{a}).
We will first show that the category $\MF^\infty(R,w)$ has a compact generator
and then deduce that the localization functor
\[
\MF^\infty(R,w) \to \MF^\infty(R_{\s}, w)
\]
is a quasi-equivalence. 
We point out that a localization statement of this type was already proved
in \cite[1.14]{orlov-2003}. 
As a first step, we have the 
following variant of Proposition \ref{residue.dual}, again reducing the compact generation problem to the
Homological Nakayama Lemma.

\begin{prop} \label{residue.dual.2}
We consider the residue field $k$ of $\s$ as an
$S$-module and consider its stabilization $k^{\stab}_{(R,-w)}$ in the category
$\MF(R,-w)$. For an object $Y$ in $\MF^\infty(R,w)$ with corresponding MCM
module $N$ in $\underline{\MCM}^{\infty}(S)$ and $i>0$, 
we have a natural isomorphism
\[
[(k^{\stab}_{(R,-w)})^{\vee}[i],Y] \cong \Tor_{i}^S(k, N)\text{.}
\]
\end{prop}

Note, that we may not be able to
obtain an explicit description of $(k^{\stab}_{(R,-w)})^{\vee}$ by using stabilization method of \ref{stabilization}, since the
ideal $\s$ is not necessarily generated globally by a regular sequence. This is
not a problem, all which matters
is that $(k^{\stab}_{(R,-w)})^{\vee}$ is a compact object in $\MF^\infty(R,w)$.

Next we claim that the Homological Nakayama Lemma generalizes to
the nonlocal hypersurface ring $S=R/w$. In fact, we can immediately reduce to the local situation
via the following lemma.

\begin{lem} \label{tor.loc}
	Let $M,N$ be objects in $\underline{\MCM}^{\infty}(S)$. Then the
	$S$-module $\Tor_i^S(N,M)$ is supported on $\s$ for $i>0$. In particular,
	we have an isomorphism $\Tor_i^S(N,M) \cong \Tor_i^{S_{\s}}(M_\s,N_\s)$.
\end{lem}
\begin{proof}
	The $S$-module $\Tor_i^S(M,N)$ for positive $i$ is calculated by the cohomology of the
	$2$-periodic complex $\overline{X} \otimes_S N$, where $X$ is the matrix
	factorization corresponding to $M$. After localizing this complex at any
	prime ideal $\p$, we can express $dw = \partial_1(w) e_1 + \dots +
	\partial_n(w) e_n$ where $e_1, \dots, e_n$ is a basis of
	$\Omega_{R_\p/k}$ with dual derivations $\partial_1, \dots, \partial_n$.
	The same argumentation as in the beginning of Subsection
	\ref{subsect.nakayama} shows that multiplication by $\partial_i(w)$ on
	the complex $S_\p \otimes_S \overline{X} \otimes_S N$ is homotopic to
	$0$. For $\p \ne \s$ at least one partial derivative is invertible which implies that
	the localized complex is contractible. The statement of the lemma 
	follows since localization commutes with taking cohomology.
\end{proof}

\begin{cor} The object $(k^{\stab}_{(R,-w)})^{\vee}$ is a compact generator of
	the category $\MF^{\infty}(R,w)$.
\end{cor}

\begin{proof}
	This follows by combining Proposition \ref{residue.dual.2}, Lemma
	\ref{tor.loc} and Theorem \ref{nakayama}.
\end{proof}

\begin{theo}
	\label{equiv.localization}
The localization functor
\[
\MF^\infty(R,w) \to \MF^\infty(R_{\s}, w)
\]
is a quasi-equivalence.
\end{theo}

\begin{proof}
	By the same argument as in the proof of Theorem \ref{genequivalence}, 
	it suffices to check the quasi-fully faithfullness on the compact
	generator $X = (k^{\stab}_{(R,-w)})^{\vee}$. Let $N$ denote the maximal
	Cohen-Macaulay module corresponding to $X$. The localization map 
	\[
	\MF( X,X) \to \MF(R_{\s} \otimes_R X, R_{\s} \otimes_R X)
	\]
	induces by Proposition \ref{residue.dual.2} on cohomology the localization map
	\[
	\Tor_{i}^S(k, N) \to \Tor_{i}^{S_\s}(k, N_\s)
	\]
	which is an isomorphism by Lemma \ref{tor.loc}.

	The stabilization of the residue field at $\s$ in $\MF^\infty(R,w)$ maps to an object in $\MF^\infty(R_{\s},w)$ which is
	isomorphic to $k^{\stab}$, and therefore generates the category $\MF^\infty(R_{\s},w)$. 
	Thus, the localization functor is essentially surjective on the homotopy
	categories. 
\end{proof}

\begin{cor} \label{global.generator} Using the above notation, let $k$ denote the residue field of the
	maximal ideal $\s \in S$ and let $k^{\stab}$ be its stabilization in the
	category $\MF(R,w)$. Then $k^{\stab}$ is a compact generator of
	$\MF^{\infty}(R,w)$.
\end{cor}

\begin{proof} This follows since the localization functor commutes with
	coproducts and maps $k^{\stab}$ to a
	stabilization of the residue field in the category
	$\MF^{\infty}(R_\s,w)$. Since the latter object is a compact generator,
	$k^{\stab}$ generates as well.
\end{proof}

\section{First applications}
\label{sect.app}

\subsection{The homotopy theory of $2$-periodic dg categories}
\label{subsect.homotopy}

Before giving applications of Theorem \ref{generator}, we introduce a
homotopical framework for $2$-periodic dg categories. Most statements in this
subsection are immediate consequences or variants of well-known results. We define
model structures in the $2$-periodic context which allow us to obtain
a homotopy theory analogous to the one developed in \cite{tabuada, toen.morita}. All dg categories are assumed to be
small by virtue of choosing small quasi-equivalent dg categories. We refer to 
loc. cit. for details on how to take the necessary set-theoretic
precautions. Our model category terminology is the one used in \cite{hovey} which also
contains the standard results we need.

For a field $k$ consider the dg algebra $\ku$ where the variable $u$ has
degree $2$ and the differential is the zero map. Let $\C(k)$ be the dg category
of unbounded complexes of $k$-modules and define $\C(\ku)$ to be the dg category
of functors from $\ku$, considered as a dg category with one object, to $\C(k)$.
There is an obvious enriched equivalence between the dg category of $\Zt$-graded
complexes over $k$ and the category $\C(\ku)$. 

There is an adjunction
\[
\xymatrix@C=2cm{
\C(k) \ar[r]^(.35){-\otimes_k \ku} & \C(\ku) \quad \text{,} \quad \C(k)
& \ar[l]_(.4){F} \C(\ku) 
}
\]
where $F$ denotes the forgetful functor. The image under $-\otimes_k \ku$ of the
generating (trivial) cofibrations for the projective model structure on $\C(k)$,
as defined in \cite{hovey},
form the generating (trivial) cofibrations for a model structure on $\C(\ku)$.
Therefore, $\C(\ku)$ admits a cofibrantly generated model structure such that the
above adjunction is a Quillen adjunction. As for the category $\C(k)$, weak equivalences in
$\C(\ku)$ are defined to be quasi-isomorphisms and fibrations are levelwise surjective
maps. Since $k$ is a field, every object in $\C(k)$ and $\C(\ku)$ is cofibrant.
Note that $\C(\ku)$ has a monoidal structure given by the tensor product over
$\ku$. Under the equivalence with $\Zt$-graded complexes, this tensor product
translates into the $\Zt$-graded tensor product.

Let $\dgk$ denote the category of small dg categories over $k$. We introduce the
category $\dgku$ of $2$-periodic dg categories where the objects are small
categories enriched over $\C(\ku)$ and the morphisms are dg functors. The above
adjunction on the level of complexes induces an adjunction
\[
\xymatrix@C=2cm{
\dgk \ar[r]^(0.35){-\otimes_k \ku} & \dgku \quad \text{,} \quad \dgk &
\ar[l]_(0.4){F} \dgku \text{.}
}
\]
We claim that $\dgku$ admits the structure of a cofibrantly generated model
category such that this adjunction is a Quillen adjunction. The category $\dgk$
admits a cofibrantly generated model structure which is described in
\cite{tabuada}. Again, we can take the generating (trivial) cofibrations in
$\dgku$ to be the image under $-\otimes \ku$ of the generating (trivial)
cofibrations for $\dgk$. The only property needed to apply \cite[Theorem 2.1.19]{hovey} which does
not formally follow from the above adjunction is that relative $J$-cell
complexes are weak equivalences. However, the proof of this fact for $\dgk$
given in \cite{tabuada} implies the statement for $\dgku$ in complete analogy.
As for $\dgk$, the weak equivalences in $\dgku$ are quasi-equivalences.

The existence of this model structure allows for a description of the mapping
spaces in the category $\Ho(\dgku)$ as done for $\Ho(\dgk)$ in
\cite{toen.morita}.

Note that $\dgku$ admits a closed monoidal structure where the tensor product
of two dg categories is given by the product on objects and the tensor product
over $\ku$ on morphism complexes. Thus, we have an adjunction
\[
\Hom(T \otimes T', T'') = \Hom(T, \underline{\Hom}(T', T''))
\] 
where $\underline{\Hom}$ denotes the dg category of $\C(\ku)$-enriched functors.
For a category $T$ in $\dgku$, we define the dg category of modules over $T$ to
be
\[
\mmod{T} = \underline{\Hom}(T, \C(\ku)) \text{.}
\]
For an object $x$ in $T$ we define $\underline{h}_x$ to be the $T^{\op}$-module
given by $\underline{h}_x(y) = T(y,x)$. The dg Yoneda functor
\[
\underline{h}_{-}:\; T \lra \mmod{T^{\op}}, \; x \mapsto
\underline{h}_x
\]
is $\C(\ku)$-fully faithful (i.e. induces an isomorphism of morphism complexes).
Dually we have a $\C(\ku)$-fully faithful functor 
\[
\underline{h}^{-}:\; T^{\op} \lra \mmod{T}, \; x \mapsto
T(x,-) \text{.}
\]
Functors of the form $\underline{h}_x$ are called
\emph{representable}, the ones of the form $\underline{h}^x$
\emph{corepresentable}.

There exists a $\C(\ku)$-model structure on $\mmod{T}$ where the fibrations and weak
equivalences are defined levelwise using the model structure on $\C(\ku)$. This
model structure is cofibrantly generated where generating (trivial) cofibrations are obtained by
applying the functors $\underline{h}^x \otimes -$ to generating (trivial)
cofibrations in $\C(\ku)$ for all $x \in T$.
The homotopy category $\Ho(\mmod{T})$ yields the \emph{derived category of} $T$ which we
denote by $\D(T)$. Note that due to the existence of the model structure we can
define $\Int(\mmod{T})$ to be the full dg subcategory of $T$ consisting of objects
which are both fibrant and cofibrant.
As in the last subsection we denote by $[T]$ the category obtained from a dg
category $T$ by applying the functor $\H^0(-)$ to all morphism complexes. This
yields a functor $[-]$ from $\dgku$ to $\operatorname{cat}$.
We have a natural equivalence of categories $[ \Int(\mmod{T}) ] \simeq \D(T)$.

Since the representable $T^{\op}$-modules $\underline{h}_x$ are cofibrant
and fibrant, the Yoneda embedding yields a functor
\[
T \lra \Int( \mmod{T^{\op}}) \text{.}
\]
To simplify notation we introduce $\widehat{T} = \Int( \mmod{T^{\op}})$.

A $T^{\op}$-module $M$ is called \emph{compact} or \emph{perfect} if $[M, -]$ commutes with coproducts.
It is easy to see that all representable modules are perfect. Therefore,
the Yoneda embedding provides a functor
\[
T \lra \widehat{T}_{\pe} \text{,}
\]
where the subscript indicates the full dg subcategory of perfect modules.
The category $\widehat{T}_{\pe}$ is called the \emph{triangulated hull} of $T$ and the
dg category $T$ is called \emph{triangulated} if the Yoneda embedding into its
triangulated hull is a quasi-equivalence.

We conclude by observing that the homotopical framework which we defined for
$2$-periodic categories is in exact analogy to the one defined in
\cite{toen.morita}. All properties pointed out in sections 2 and 3
of loc. cit. hold mutatis mutandis in the $2$-periodic context. Therefore all
proofs in sections 4,5,6 and 7 can be repeated more or less verbatim to obtain
identical results over $\ku$. In fact, as the author of loc. cit. points out in
the introduction, he expects the results to generalize to $M$-enriched
categories for certain very general monoidal model categories $M$. The $2$-periodic
version would then correspond to the choice $M = \C(\ku)$. We will therefore cite results
in loc. cit. without further comments, if the $2$-periodic reformulation is obvious.

\subsection{Equivalences of categories}

Let $T$ be a $2$-periodic dg category and consider a set $W$ of morphisms in $T$.
By \cite[8.7]{toen.morita} there exists a dg category $\L_W(T)$ and a morphism $l:
T \to \L_W(T)$ in $\Ho(\dgku)$ which is called the localization of $T$ with
respect to $W$. It enjoys the following universal property which determines
it uniquely up to isomorphism in $\Ho(\dgku)$. For
every dg category $T'$ the pullback map
\[
l^*: \; [ \L_W(T), T' ] \lra [ T, T' ]
\]
is injective and the image consists of morphisms $f: T \to T'$ such that $[f]$
maps morphisms in $W$ to isomorphisms in $[T']$.  

For a $2$-periodic dg category $T$ we introduce the \emph{dg derived category
of}
$T$ as the localization $\L_W(\mmod{T})$ with respect to the set of weak
equivalences. The category $[ \L_W(\mmod{T})]$ is equivalent to the 
derived category $\D(T)$ of $T$ which we introduced in Section
\ref{subsect.homotopy}.

The essential arguments in the proof of the following theorem are due to Keller
\cite[4.3]{keller}.

\begin{theo} 
	\label{genequivalence}
	
	Let $T$ be a triangulated $2$-periodic dg category which admits coproducts. Let $S$ be a full dg
	subcategory of $T$ whose objects are compact in $[T]$. Assume that the
	smallest triangulated subcategory of $[T]$ which contains the objects of
	$[S]$ and is closed under coproducts is $[T]$ itself. Then the
	map 
	\[
	f:\; T \to \L_W(\mmod{S^{\op}}), \; x \mapsto l(T(-,x)|_S)
	\]
	is an isomorphism in $\Ho(\dgku)$. Furthermore, $f$ induces an
	isomorphism
	\[
	T_{\pe} \simeq \L_W(\mmod{S^{\op}})_{\pe}
	\]
	between the full dg subcategories of compact objects.
\end{theo}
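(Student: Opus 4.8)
The plan is to realize the functor $f$ as a composite of more elementary, manifestly homotopy-invariant constructions and then appeal to the compact-generation hypothesis to upgrade a fully faithful functor to an equivalence. The starting point is the restricted Yoneda functor. Let $h\colon T \to \mmod{S^{\op}}$ send $x$ to the $S^{\op}$-module $y \mapsto T(y,x)$; composing with the localization morphism $l$ gives $f$. Because $S$ consists of objects which are compact in $[T]$ and $[T]$ is generated by $[S]$ under coproducts and triangles (closed under isomorphism), the functor $[h]\colon [T] \to \D(S^{\op})$ restricted to $[S]$ is fully faithful and lands in the perfect modules; this is the Yoneda lemma at the level of cohomology together with the observation, recalled in Section \ref{subsect.homotopy}, that representable modules are perfect. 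So I would first establish:
\begin{itemize}
\item $[f]$ restricted to $[S]$ is fully faithful, with essential image the representable (hence perfect) modules;
\item since both $[T]$ and $\D(S^{\op})$ admit coproducts, $[S]$ generates $[T]$ under coproducts and triangles, and the representables generate $\D(S^{\op})$ under coproducts and triangles (standard for a module category), the functor $[f]\colon [T] \to \D(S^{\op})$ is fully faithful on all of $[T]$ and essentially surjective.
\end{itemize}
The second bullet is the heart of the matter and uses Keller's device: fully faithfulness propagates from a generating set of compact objects to the whole category because $\Hom$ out of a compact object commutes with the coproducts that build up $[T]$, and $[f]$ is compatible with coproducts and triangles; essential surjectivity then follows because the image is a coproduct-closed, triangulated subcategory containing a generating set.

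Next I would promote this to a statement about dg categories rather than their homotopy categories. By \cite[8.7]{toen.morita} (in its $2$-periodic reformulation, which is legitimate by the discussion at the end of Section \ref{subsect.homotopy}), the localization $\L_W(\mmod{S^{\op}})$ has homotopy category $\D(S^{\op})$, and a morphism in $\Ho(\dgku)$ is an isomorphism precisely when it induces a quasi-equivalence, i.e.\ an equivalence on homotopy categories together with quasi-isomorphisms on all morphism complexes. We have just seen $[f]$ is an equivalence; it remains to check that $f$ is quasi-fully faithful, i.e.\ that $T(x,y) \to \L_W(\mmod{S^{\op}})(f(x),f(y))$ is a quasi-isomorphism. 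For this one uses that the derived mapping complex in $\mmod{S^{\op}}$ out of a representable module computes the module's value (the dg/$A_\infty$ Yoneda lemma), reducing the general case to $x \in S$ by the same compact-generation dévissage: both sides send coproducts and triangles in the first variable $x$ to the appropriate (co)limits and triangles of complexes, and $x$ ranges over a class built from $S$ under these operations, while the morphism complexes out of a compact object commute with the relevant coproducts up to quasi-isomorphism. This is again Keller's argument, now carried out on morphism complexes rather than their $\H^0$.

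Finally, the assertion about perfect objects. Since $f$ is an isomorphism in $\Ho(\dgku)$, it induces an equivalence $\widehat{T} \simeq \widehat{\L_W(\mmod{S^{\op}})}$ on derived categories, hence an equivalence on the full dg subcategories of perfect (compact) objects; because $T$ is assumed triangulated, $T \simeq T_{\pe}$ via Yoneda, and similarly on the other side, so $T_{\pe} \simeq \L_W(\mmod{S^{\op}})_{\pe}$. One small point to verify is that $[f]$ actually matches compact objects with compact objects: a perfect $S^{\op}$-module lies in the thick subcategory generated by representables, and $[f]$ carries $[S]$ to the representables, so $[f]^{-1}$ of the perfect modules is exactly the thick subcategory of $[T]$ generated by $[S]$, which — because $[S]$ consists of compact generators — is precisely the subcategory of compact objects of $[T]$ by the standard theorem of Neeman/Ravenel. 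The main obstacle I anticipate is the careful bookkeeping in the dévissage for quasi-fully faithfulness: one must ensure that the relevant coproducts in $\mmod{S^{\op}}$ are homotopy coproducts (harmless here, since $k$ is a field and every object is cofibrant) and that "commuting with coproducts" is used only for morphism complexes out of objects that are genuinely compact in $[T]$, not merely perfect as modules. Everything else is a routine transcription of \cite[4.3]{keller} into the $2$-periodic setting.
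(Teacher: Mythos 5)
Your proposal follows essentially the same route as the paper's proof, which is Keller's d\'evissage argument: quasi-full-faithfulness on $S$ via the dg Yoneda lemma (using that representables are cofibrant, so the localization does not change their mapping complexes), propagation to all of $T$ using compactness of the objects of $S$, closure under triangles (five lemma) and coproducts, and essential surjectivity because the essential image is a localizing subcategory containing the representables, which generate $\D(S^{\op})$ (the paper invokes \cite[Lemma 2.2.1]{schwede} at this point). One harmless difference: to upgrade full faithfulness of $[f]$ to quasi-full-faithfulness you propose a second d\'evissage carried out on morphism complexes. That works, but it is not needed: since $T$ and $\L_W(\mmod{S^{\op}})$ are triangulated $2$-periodic dg categories, $\H^n$ of a morphism complex is $\H^0$ of the morphism complex into a shift, so full faithfulness of $[f]$ (which the paper proves by running your d\'evissage in the second and then the first variable at the $\H^0$ level) already yields quasi-isomorphisms on all morphism complexes.

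The one genuinely problematic sentence is ``because $T$ is assumed triangulated, $T \simeq T_{\pe}$ via Yoneda.'' In this paper ``triangulated'' means that the Yoneda embedding $T \to \widehat{T}_{\pe}$ into the triangulated hull (perfect $T^{\op}$-modules) is a quasi-equivalence; it does not mean that every object of $T$ is compact in $[T]$. In the intended application $T = \MF^{\infty}(R,w)$ is triangulated while $T_{\pe}$ is a proper subcategory, and every representable $T^{\op}$-module is perfect regardless of whether the representing object is compact in $[T]$, so the detour through $\widehat{T} \simeq \widehat{\L_W(\mmod{S^{\op}})}$ loses exactly the compactness information the second assertion is about. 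Fortunately, your closing ``small point'' is the correct argument and renders that sentence unnecessary: $[f]$ is an equivalence, hence preserves and reflects compactness, and it identifies the thick closure of $[S]$ in $[T]$ --- which equals $[T_{\pe}]$ by the standard theorem on compactly generated triangulated categories --- with the thick closure of the representables, i.e.\ the perfect $S^{\op}$-modules; combined with quasi-full-faithfulness of $f$ this gives $T_{\pe} \simeq \L_W(\mmod{S^{\op}})_{\pe}$. This is precisely how the paper concludes, citing \cite[5.3]{keller} for the identification on the module side. With that substitution your proof is complete and coincides in substance with the paper's.
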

\begin{proof}
	Note that, since both $T$ and $\L_W(\mmod{S^{\op}})$ are triangulated dg
	categories, the induced functor $[f] : [T] \to \D(S^{\op})$ is an exact
	functor of triangulated categories.

	We claim that $[f]$ commutes with coproducts. Indeed, if $\{x_i\}$ are
	objects in $T$ then the natural map 
	\[
	\coprod T(-, x_i)|_S \to T( -, \coprod x_i)|_S
	\]
	is a weak equivalence since the objects in $S$ are compact in $[T]$. It
	therefore becomes an isomorphism in $\D(S^{\op})$ which proves the
	claim.

	The restriction of $f$ to the dg subcategory $S$ factors over
	the weak equivalence $\Int(\mmod{S^{\op}}) \to \L_W(\mmod{S^{\op}})$ since representable modules
	are cofibrant in $\mmod{S^{\op}}$. Therefore, by the dg Yoneda lemma the
	restriction of $f$ to $S$ is quasi-fully faithful. 

	Consider the full subcategory $A$ of $[T]$ consisting of objects $x$ such that the map
	\[
	[T](s,x) \to \D(S^{\op})(f(s), f(x))
	\]
	is an isomorphism for all objects $s$ of $S$. By the five-lemma the category $A$ is triangulated and since $[f]$
	commutes with coproducts, $A$ contains coproducts. However, since we
	just saw that $A$ contains the objects of $S$, we have $A = [T]$ by
	assumption.
	Fixing an object $y$ in $T$ and applying the same argument to the
	subcategory formed by objects $x$ such that the map
	\[
	[T](x,y) \to \D(S^{\op})(f(x), f(y))
	\]
	is an isomorphism, we deduce that $[f]$ is fully faithful. This clearly
	implies that $f$ is quasi-fully faithful since $f$ is a map of
	triangulated dg categories.

	It remains to show that $[f]$ is essentially surjective. Since $[f]$
	commutes with coproducts, the essential image of $[f]$ contains the
	quasi-representable functors and is closed under coproducts. Using the Bousfield
	localization argument \cite[Lemma 2.2.1]{schwede} which we already used
	in Section \ref{sect.generator}, we conclude that the essential image must be all 
	of $\D(S^{\op})$.

	Since $[f]$ is an equivalence, its right adjoint is also a left
	adjoint and hence preserves coproducts. Thus $[f]$ and its adjoint
	preserve compactness implying the statement about the subcategories of
	compact objects.
\end{proof}

Recall the notation 
$\widehat{T} = \Int(\mmod{T^{\op}})$ for a $2$-periodic dg category $T$. The
natural map $\widehat{T} \to \L_W(\mmod{T^{\op}})$ is an isomorphism in $\Ho(\dgku)$. Hence, we may use
the dg category $\widehat{T}$ as an explicit model for the dg derived category of $T^{\op}$.

Recall the description of the dg algebra of endomorphisms of $k^{\stab}$
as the $\Zt$-graded algebra
\[
A = R\left<\theta_1, \ldots, \theta_n, \frac{\partial}{\partial \theta_1}, \dots, \frac{\partial}{\partial \theta_n} \right>
\]
of polynomial differential operators on $R\left<\theta_1, \ldots, \theta_n\right>$ equipped
with the differential $[\delta, -]$ where
\[
\delta = \sum_{i = 1}^{n} x_i \deli + w_i \theta_i \text{.}
\] 
We reserve the letter $A$ for this dg algebra throughout this subsection. We
slightly abuse notation and also use the symbol $A$ to refer to the corresponding $2$-periodic
dg category with a single object.

\begin{theo}
\label{equivalence}
Let $(R,w)$ be as in \ref{subsect.local} with isolated singular locus (\ref{is.sing}). Then there exist the following isomorphisms in $\Ho(\dgku)$.
\begin{enumerate}
	\item $\MF^{\infty}(R,w) \stackrel{\simeq}{\lra} \widehat{\MF(R,w)}$
	\item $\MF^{\infty}(R,w) \stackrel{\simeq}{\lra} \widehat{A}$
	\item $\widehat{\MF(R,w)}_{\pe} \stackrel{\simeq}{\lra} \widehat{A}_{\pe}$
\end{enumerate}
\end{theo}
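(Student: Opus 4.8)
The strategy is to apply the abstract machinery of Theorem \ref{genequivalence} with the triangulated $2$-periodic dg category $T = \MF^\infty(R,w)$ and with two different choices of compact generating subcategory, then to read off the three claimed isomorphisms. First I would check that $\MF^\infty(R,w)$ is a triangulated $2$-periodic dg category admitting arbitrary coproducts: coproducts exist because we allow matrix factorizations of arbitrary (infinite) rank, and triangulatedness follows because $\MF^\infty(R,w)$ is the dg enhancement of the stable category of the Frobenius category of $\Zt$-graded free $R$-modules with $d^2 = w$, whose Yoneda embedding into its triangulated hull is a quasi-equivalence by the standard argument (the cone of a closed degree-zero morphism of matrix factorizations is again a matrix factorization). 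This is essentially bookkeeping and I would keep it brief.

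Next I would take $S$ to be the full dg subcategory of $\MF^\infty(R,w)$ on the single object $k^{\stab}$. By Theorem \ref{generator}, $k^{\stab}$ is a compact generator of $[\MF^\infty(R,w)]$, so the hypotheses of Theorem \ref{genequivalence} are met: the objects of $[S]$ are compact and the smallest triangulated subcategory of $[\MF^\infty(R,w)]$ containing them and closed under coproducts is everything. Theorem \ref{genequivalence} then yields an isomorphism $\MF^\infty(R,w) \stackrel{\simeq}{\lra} \L_W(\mmod{S^{\op}})$ in $\Ho(\dgku)$, together with the isomorphism on full dg subcategories of compact objects $\MF^\infty(R,w)_{\pe} \simeq \L_W(\mmod{S^{\op}})_{\pe}$. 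Since $S$ has one object with endomorphism dg algebra $\Hom_{\MF(R,w)}(k^{\stab},k^{\stab}) \cong A$ (as recalled just before the statement), the category $S^{\op}$ is the $2$-periodic dg category $A^{\op}$, and $\L_W(\mmod{S^{\op}}) \cong \widehat{S} = \widehat{A^{\op}}$; here I would note that $A$ is isomorphic to $A^{\op}$ via the involution fixing $R$ and $\theta_i$ and negating $\deli$ (the same self-duality of $k^{\stab}$ used in Corollary \ref{residue.dual}), so $\widehat{A^{\op}} \simeq \widehat{A}$. This gives isomorphism (2), and restricting to perfect objects gives one half of (3) once (1) is in hand.

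For isomorphism (1), I would instead take $S = \MF(R,w)$, the full dg subcategory of $\MF^\infty(R,w)$ on the finite-rank matrix factorizations. Every object of $[\MF(R,w)]$ is compact in $[\MF^\infty(R,w)]$ (a finite-rank factorization has a morphism complex that is finitely generated free over $R$ in each degree, so $\Hom$ out of it commutes with coproducts), and $[\MF(R,w)]$ certainly generates: it already contains $k^{\stab}$, which generates by Theorem \ref{generator}. Theorem \ref{genequivalence} then gives $\MF^\infty(R,w) \stackrel{\simeq}{\lra} \L_W(\mmod{\MF(R,w)^{\op}}) \cong \widehat{\MF(R,w)}$, which is (1), together with $\MF^\infty(R,w)_{\pe} \simeq \widehat{\MF(R,w)}_{\pe}$. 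Combining the two applications of Theorem \ref{genequivalence} through the common middle term $\MF^\infty(R,w)_{\pe}$ — that is, $\widehat{\MF(R,w)}_{\pe} \simeq \MF^\infty(R,w)_{\pe} \simeq \widehat{A}_{\pe}$ — yields (3).

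The only genuine subtlety, and the step I would be most careful about, is verifying that $\MF^\infty(R,w)$ really is a \emph{triangulated} $2$-periodic dg category in the precise sense of Section \ref{subsect.homotopy} (Yoneda embedding into the triangulated hull is a quasi-equivalence), rather than merely having a triangulated homotopy category; this is what licenses the clean application of Theorem \ref{genequivalence} and the identification $\L_W(\mmod{S^{\op}})\cong\widehat{S}$. Everything else — compactness of finite-rank factorizations, generation by $k^{\stab}$, the computation of $\End(k^{\stab})$, and the self-duality $A \cong A^{\op}$ — has already been established or is a routine dimension count, so the proof is essentially an assembly of Theorems \ref{generator} and \ref{genequivalence}.
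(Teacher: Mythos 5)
Your proposal follows essentially the same route as the paper: isomorphisms (1) and (2) are obtained by applying Theorem \ref{genequivalence} to the two compact generating subcategories $\MF(R,w)$ and the one-object subcategory on $k^{\stab}$ (whose endomorphism dg algebra is $A$), and (3) by combining the two statements about compact objects through the common middle term. One bookkeeping remark: with the paper's convention $\widehat{T}=\Int(\mmod{T^{\op}})$, taking $S$ to be the one-object category with endomorphisms $A$ gives $\L_W(\mmod{S^{\op}})\simeq \Int(\mmod{A^{\op}})=\widehat{A}$ directly, so your detour through $\widehat{A^{\op}}$ and the anti-automorphism $A\cong A^{\op}$ is unnecessary (and the sign conventions for that involution would need care); this does not affect the validity of the argument.
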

\begin{proof}
	By Theorem \ref{generator} the object $k^{\stab}$ in $\MF^{\infty}(R,w)$
	is a compact generator and the corresponding full dg subcategory is
	isomorphic to $A$. In particular, the objects in the full dg subcategory $\MF(R,w)$
	generate $\MF^{\infty}(R,w)$. Since $\MF(R,w)$ also consists of compact
	objects we deduce the first two isomorphisms from Theorem
	\ref{genequivalence}. The last statement follows immediately from the
	second part of Theorem \ref{genequivalence}.
\end{proof}

We comment on the relevance of the previous theorem. The first isomorphism gives
a natural interpretation of the dg category $\MF^{\infty}(R,w)$ which we defined
in a somewhat ad hoc way. Namely, it is an explicit model for the 
dg derived category of $\MF(R,w)$. The second isomorphism will
turn out to be useful since we have an explicit description of the dg algebra
$A$. Finally, the last isomorphism identifies the triangulated hull of the dg
category $\MF(R,w)$. Note that the natural map $\MF(R,w) \to \widehat{\MF(R,w)}_{\pe}$ is
not necessarily an isomorphism in $\Ho(\dgku)$ since the triangulated category
$[\MF(R,w)]$ is not necessarily idempotent complete. We will determine an
explicit model for $\widehat{\MF(R,w)}_{\pe}$ in Section \ref{formal.completion}.

\begin{cor}\label{split}
	The triangulated category
$[\widehat{\MF(R,w)}_{\pe}]$ is equivalent to the smallest triangulated
subcategory of $[ \MF^{\infty}(R,w) ]$ which contains 
$k^{\stab}$ and is closed under summands.
\end{cor}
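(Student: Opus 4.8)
The plan is to identify $[\widehat{\MF(R,w)}_{\pe}]$ with an intrinsic triangulated subcategory of $[\MF^{\infty}(R,w)]$ by tracking the compact objects through the isomorphism of Theorem \ref{equivalence}. First I would invoke the second isomorphism of Theorem \ref{equivalence}, which gives a quasi-equivalence $\MF^{\infty}(R,w) \simeq \widehat{A}$ in $\Ho(\dgku)$. Since the formation of the full dg subcategory of perfect (compact) modules is invariant under quasi-equivalences, this restricts to a quasi-equivalence $\MF^{\infty}(R,w)_{\pe} \simeq \widehat{A}_{\pe}$; combined with the third isomorphism of Theorem \ref{equivalence} we get $\widehat{\MF(R,w)}_{\pe} \simeq \MF^{\infty}(R,w)_{\pe}$ in $\Ho(\dgku)$, hence an equivalence of triangulated categories $[\widehat{\MF(R,w)}_{\pe}] \simeq [\MF^{\infty}(R,w)_{\pe}]$.

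It therefore remains to describe the triangulated category $[\MF^{\infty}(R,w)_{\pe}]$ of compact objects of $[\MF^{\infty}(R,w)]$. Here the standard fact from the theory of compactly generated triangulated categories (Neeman, as already cited for the Bousfield localization argument) applies: if $\mathcal{T}$ is a triangulated category with coproducts which is compactly generated by a single compact object $G$, then the subcategory $\mathcal{T}^c$ of compact objects is exactly the thick (triangulated, idempotent-closed) subcategory generated by $G$ — equivalently the smallest triangulated subcategory containing $G$ and closed under direct summands. By Theorem \ref{generator}, $[\MF^{\infty}(R,w)]$ is compactly generated by $G = k^{\stab}$, so $[\MF^{\infty}(R,w)_{\pe}]$ is the smallest triangulated subcategory of $[\MF^{\infty}(R,w)]$ containing $k^{\stab}$ and closed under summands. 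Chaining this with the equivalence from the previous paragraph yields the statement.

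The one point that needs a little care — the main (minor) obstacle — is to make sure that the notion of ``compact object'' used in the definition of $(-)_{\pe}$ in Section \ref{subsect.homotopy} (a module $M$ with $[M,-]$ commuting with coproducts) matches the notion of compactness in the ambient triangulated category $[\MF^{\infty}(R,w)]$ under the equivalence of Theorem \ref{equivalence}, and that the quasi-equivalences of Theorem \ref{equivalence} genuinely preserve this property. This is exactly the content of the assertion that $[f]$ commutes with coproducts in the proof of Theorem \ref{genequivalence}, so it is already available: coproduct-preserving equivalences preserve compactness in both directions, and the perfect modules over $A$ are by construction the compact objects of $\D(A)$. With that observed, no further computation is required, and the corollary follows formally.
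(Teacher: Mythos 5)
Your argument is correct and is essentially the paper's own (implicit) derivation: Corollary \ref{split} is stated there as an immediate consequence of Theorem \ref{equivalence} combined with Theorem \ref{generator} and the standard identification (Keller \cite[5.3]{keller}, equivalently Neeman) of the compact objects of a compactly generated triangulated category with the thick subcategory generated by a compact generator. Your care about transporting compactness along the quasi-equivalences is exactly the point already secured by the coproduct-preservation argument in the proof of Theorem \ref{genequivalence}, so nothing further is needed.
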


The next corollary introduces a notion of quasi-isomorphism between matrix
factorizations. The category $\left[ \MF^{\infty}(R,w) \right]$ is obtained as a
localization from $\MF^{\infty}(R,w)$ by inverting all such quasi-isomorphisms.
One may therefore think of the category $\left[ \MF^{\infty}(R,w) \right]$ as
a derived category of twisted complexes of $R$-modules.

\begin{cor}
	\label{cohomology}
Let $X, Y$ be objects in $\MF^{\infty}(R,w)$. A $0$-cycle $f \in \Hom(X,Y)$ induces an isomorphism in 
$\left[ \MF^{\infty}(R,w) \right]$ if and only if $f$ induces a
quasi-isomorphism of the complexes of $k$-modules $k \otimes_R X$ and $k
\otimes_R Y$.
\end{cor}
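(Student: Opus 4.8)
The plan is to reduce the statement to the fact that $k^{\stab}$ is a compact generator of $[\MF^{\infty}(R,w)]$ (Theorem \ref{generator}) together with the standard characterization of isomorphisms in a compactly generated triangulated category in terms of the generator. Concretely, a morphism $f\colon X\to Y$ in a triangulated category with compact generator $G$ is an isomorphism if and only if the induced maps $\Hom(G[i],X)\to\Hom(G[i],Y)$ are isomorphisms for all $i$; this follows by forming the cone of $f$ and observing that an object orthogonal to a compact generator vanishes (the Bousfield localization argument \cite[Lemma 2.2.1]{schwede} already invoked in Section \ref{sect.generator}). So the first step is: $f$ is an isomorphism in $[\MF^{\infty}(R,w)]$ iff $\Hom(k^{\stab}[i],f)$ is an isomorphism for $i\in\{0,1\}$.

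The second step is to identify $\bigoplus_i \Hom_{[\MF^{\infty}(R,w)]}(k^{\stab}[i],X)$ with a cohomological invariant of the complex $k\otimes_R X$. Under the equivalence $[\MF^{\infty}(R,w)]\simeq\underline{\MCM}^{\infty}(S)$, Corollary \ref{residue.dual} gives
\[
\bigoplus_{i\in\Zt}\underline{\Hom}_S(k^{\stab}[i],M_X)\cong\bigoplus_{i\in\{1,2\}}\Tor^S_i(k,M_X),
\]
where $M_X=\coker(\varphi_X)$. Now the $2$-periodic $S$-free resolution of $M_X$ is obtained by reducing the matrix factorization $X$ modulo $w$ and extending periodically, so $\Tor^S_*(k,M_X)$ is computed by the $2$-periodic complex $k\otimes_S\overline{X}=k\otimes_R X$ (with its twisted differential). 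Hence $\Tor^S_i(k,M_X)$ for $i>0$ is, up to the $2$-periodicity and a degree shift, exactly the cohomology $\H^*(k\otimes_R X)$. The point is that all of these identifications are natural in $X$, so a $0$-cycle $f\colon X\to Y$ induces an isomorphism on $\bigoplus_i\underline{\Hom}_S(k^{\stab}[i],-)$ if and only if it induces an isomorphism on $\H^*(k\otimes_R X)\to\H^*(k\otimes_R Y)$, i.e.\ a quasi-isomorphism $k\otimes_R X\to k\otimes_R Y$.

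The main obstacle is bookkeeping rather than conceptual: one must check that the isomorphism of Corollary \ref{residue.dual}, which was only asserted for a single object and "does not necessarily preserve the parity of the grading," is genuinely functorial in $N$, and one must pin down precisely how $\Tor^S_i(k,M_X)$ for $i\in\{1,2\}$ recovers the full cohomology of the two-periodic complex $k\otimes_R X$ (so that no graded piece of $\H^*(k\otimes_R X)$ is lost). For the latter, I would argue directly: $k\otimes_R X$ is a $\Zt$-graded complex $(k\otimes X^0)\rightleftarrows(k\otimes X^1)$, its cohomology in the two parities is computed by the periodic resolution $\overline{X}$, and since that periodic complex is literally the tail of the minimal $S$-free resolution of $M_X$, its homology in positive degrees is $\Tor^S_i(k,M_X)$, which by $2$-periodicity is determined by $i\in\{1,2\}$. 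Assembling these identifications and invoking the first step then yields the corollary. One subtlety worth a remark: the grading (parity) on $\H^*(k\otimes_R X)$ may be permuted relative to the natural $\Zt$-grading on $\bigoplus_i\underline{\Hom}_S(k^{\stab}[i],M_X)$, exactly as noted after Corollary \ref{residue.dual}, but this is harmless since the statement only concerns whether $f$ induces an isomorphism on total cohomology.
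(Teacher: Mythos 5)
Your proposal is correct and takes essentially the same route as the paper: the paper reduces the statement to $\Hom(k^{\stab},f)$ being a quasi-isomorphism (via Theorem \ref{equivalence}, which encapsulates the compact-generator/Bousfield argument you spell out) and then cites Corollary \ref{residue.dual} to identify the cohomology of $\Hom(k^{\stab},X)$, up to a shift and naturally in $X$, with that of $k\otimes_R X$. Your more explicit unpacking of the $\Tor$-computation via the $2$-periodic resolution and the parity caveat is just a detailed version of the same two steps.
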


\begin{proof}
By Theorem \ref{equivalence}, $f$ induces an isomorphism if and only if $\Hom(
k^{\stab}, f)$ is a quasi-isomorphism. However, by Proposition \ref{residue.dual}
the cohomology of the complex $\Hom(k^{\stab}, X)$ is up to shift naturally
isomorphic to the cohomology of the complex $k \otimes_R X$ and the analogous
statement is true for $Y$.
\end{proof}

Finally, we point out that everything generalizes to the nonlocal situation of
\ref{subsect.nonlocal}.

\begin{cor} \label{equivalence.nonlocal} 
Let $(R,w)$ be as in \ref{subsect.nonlocal}, satisfying condition (\ref{a}).
Then all statements of Theorem \ref{equivalence} remain true verbatim.
\end{cor}

\begin{proof}
	This follows directly from Corollary \ref{global.generator}.
\end{proof}

\subsection{Kn\"orrer periodicity}
\label{knoerrer}

Let $(R,w)$ be as in \ref{subsect.nonlocal}, satisfying condition (\ref{b}). We show that the dg categories $\MF^\infty(R,w)$ and $\MF^\infty(R[x,y], w + xy)$ are weakly equivalent. This
phenomenon is known as Kn\"orrer periodicity (cf. \cite{knoerrer}) and illustrates in particular that
it is not possible to reconstruct the ambient ring $R$ from the matrix
factorization category $\MF^{\infty}(R,w)$.
Observe that if $(R,w)$ satisfies (\ref{b}), then the same is true for $(R[x,y], w + xy)$.

First, we consider the category $\MF^\infty(k[x,y], xy)$. By the results of
\ref{subsect.localization}, the stabilized residue field at the origin is a generator. It
splits into a direct sum $X \bigoplus X[1]$, where $X$ denotes the factorization
\[
\xymatrix{ 
k[x,y] \ar@<+.5ex>[r]^{x} & \ar@<+.5ex>[l]^{y} k[x,y]\text{.}
}
\]
Thus, we can as well take $X$ as a generator of the category $\MF^\infty(k[x,y],
xy)$. For example by using Lemma \ref{mcm.approx}, one immediately checks that
the endomorphism dg algebra $B$ of $X$ is weakly equivalent to the algebra $k$
concentrated in even degree.
Thus, by Corollary \ref{equivalence.nonlocal}, the category $\MF^\infty( k[x,y], xy)$ is 
weakly equivalent to the dg category of $\Zt$-graded complexes of $k$-vector
spaces.

To obtain the Kn\"orrer periodicity statement, we observe that the factorization $k^{\stab}
\otimes_k^{\Zt} (X \bigoplus X[1])$ 
in the category $\MF^{\infty}(R[x,y],w + xy)$ stabilizes the residue field at
the maximal ideal supporting the critical locus. Thus, the object $k^{\stab} \otimes_k^{\Zt} X$ 
generates the category $\MF^{\infty}(R[x,y],w + xy)$. The endomorphism dg algebra of
this generator is given by $A \otimes_k B$, where $A$ and $B$ are the
endomorphism dg algebras of $k^{\stab}$ in $\MF(R,w)$ and $X$ in $\MF(k[x,y], xy)$
respectively. But since $k$ is a field, the dg algebra $A \otimes_k^{\Zt} B$ is weakly
equivalent to $A$. In view of Corollary \ref{equivalence.nonlocal}, this implies that the dg categories 
$\MF^{\infty}(R[x,y],w + xy)$ and $\MF^{\infty}(R,w)$ are weakly equivalent. By
the results explained in \ref{formal.completion}, we also obtain an equivalence of
the categories of finite rank factorizations after passing to completions of
$R$ and $R[x,y]$.

\subsection{Formal completion}
\label{formal.completion}

Let $(R,w)$ be as in \ref{subsect.local} with isolated singular locus
(\ref{is.sing}).
We address the question of describing the triangulated hull
$\widehat{\MF(R,w)}_{\pe}$ of $\MF(R,w)$ explicitly. Let $\widehat{R}$ denote
the $\m$-adic completion of $R$ and consider the category $\MF(\widehat{R}, w)$. 

\begin{lem}\label{idem.complete} The Yoneda embedding
\[
\MF(\widehat{R}, w) \to \widehat{\MF(\widehat{R}, w)}_{\pe}
\]
is an isomorphism in $\Ho(\dgku)$. In other words, the dg category
$\MF(\widehat{R}, w)$ is triangulated and, in particular, the triangulated category $[\MF(\widehat{R}, w)]$ is
idempotent complete.
\end{lem}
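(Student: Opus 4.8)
The plan is to show that $[\MF(\widehat R, w)]$ is idempotent complete by identifying it with the stable category of maximal Cohen--Macaulay modules over $\widehat S = \widehat R / w$, and then invoking the classical fact that this category is Krull--Schmidt. First I would recall from Section \ref{subsect.factorizations} that $[\MF(\widehat R, w)] \simeq \underline{\MCM}(\widehat S)$ via the cokernel functor. Since $\widehat R$ is a complete regular local ring, $\widehat S$ is a \emph{complete} local hypersurface ring, hence an Henselian local ring; moreover $\widehat S$ is a complete Cohen--Macaulay ring admitting a canonical module. Under these hypotheses the category $\MCM(\widehat S)$ of maximal Cohen--Macaulay modules is a Krull--Schmidt category: every object decomposes into a finite direct sum of indecomposables with local endomorphism rings, and such categories are idempotent complete. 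Passing to the stable category $\underline{\MCM}(\widehat S)$, which is a quotient by the ideal of maps factoring through projectives, preserves idempotent completeness, since an idempotent in $\underline{\Hom}$ can be lifted (the obstruction lies in a square-zero ideal after a suitable truncation, or one argues directly that the stable category of a Frobenius category that is Krull--Schmidt is again Krull--Schmidt). This gives that $[\MF(\widehat R, w)]$ is idempotent complete.

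Once idempotent completeness of $[\MF(\widehat R, w)]$ is established, the lemma follows formally. By Theorem \ref{equivalence} applied to $(\widehat R, w)$ — noting that $w$ still has an isolated singularity over $\widehat R$ since the Tyurina algebra is unchanged by completion — the triangulated hull $[\widehat{\MF(\widehat R, w)}_{\pe}]$ is the idempotent completion of $[\MF(\widehat R, w)]$ (this is the content of Corollary \ref{split} together with the general fact that the triangulated hull realizes the Karoubi envelope on homotopy categories). An idempotent complete triangulated category equals its own Karoubi envelope, so the canonical map $[\MF(\widehat R, w)] \to [\widehat{\MF(\widehat R, w)}_{\pe}]$ is an equivalence. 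Lifting this to the dg level: the Yoneda embedding $\MF(\widehat R, w) \to \widehat{\MF(\widehat R, w)}_{\pe}$ is already quasi-fully faithful by the dg Yoneda lemma (as noted in Section \ref{subsect.homotopy}), and essential surjectivity on homotopy categories is exactly the statement just proved, so the map is a quasi-equivalence, i.e.\ an isomorphism in $\Ho(\dgku)$.

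The main obstacle is the first step: proving that $[\MF(\widehat R, w)]$ is idempotent complete, which genuinely uses completeness of $\widehat R$ and is false in general for non-complete $R$. The cleanest route is to cite the Krull--Schmidt property of $\MCM$ over a complete local Cohen--Macaulay ring (a standard result, e.g.\ in Yoshino's book \cite{yoshino}), and then to check that the stable category inherits this. The one point requiring a little care is that idempotents in $\underline{\Hom}_{\widehat S}(M, M)$ split in $\underline{\MCM}(\widehat S)$: given an idempotent $\bar e$, one lifts to some $e \in \Hom_{\widehat S}(M,M)$ with $e^2 - e$ factoring through a free module, and since $\MCM(\widehat S)$ itself is Krull--Schmidt hence idempotent complete, one can adjust $\bar e$ to an honest idempotent whose image is an MCM direct summand — this is the standard argument that the stable category of a Krull--Schmidt Frobenius category is Krull--Schmidt, and in particular Karoubian. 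Alternatively, one could avoid MCM modules entirely and argue that $[\MF(\widehat R, w)]$ is the homotopy category of a dg category whose Hom-complexes are finitely generated over the complete Noetherian local ring $\widehat R$, so its $\H^0$-categories have semiperfect endomorphism rings, forcing the Krull--Schmidt property; but routing through $\underline{\MCM}(\widehat S)$ keeps the argument closest to the framework already set up in the paper.
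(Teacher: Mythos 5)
Your proposal is correct and takes essentially the same route as the paper: reduce the lemma to idempotent completeness of $[\MF(\widehat R,w)]$ (the paper cites Keller's result that the triangulated hull is the thick closure of the Yoneda image, so no appeal to Theorem \ref{equivalence} or the isolated-singularity hypothesis is needed for this step), then identify $[\MF(\widehat R,w)]$ with $\underline{\MCM}(\widehat R/w)$ and use completeness of $\widehat R$ to get local endomorphism rings of indecomposables (Swan), i.e.\ the Krull--Schmidt property. The only cosmetic difference is that the paper sidesteps the delicate idempotent-lifting step you sketch by observing directly that the stable endomorphism ring of an indecomposable, being a quotient of a local ring, contains no nontrivial idempotents.
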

\begin{proof}
	By \cite[5.3]{keller}, the category $[\widehat{\MF(\widehat{R},w)}_{\pe}]$ is the smallest triangulated subcategory of 
	$[\widehat{\MF(\widehat{R},w)}]$ which is closed under summands and contains the Yoneda image of
	$[\MF(\widehat{R},w)]$. It therefore suffices to show that
	$[\MF(\widehat{R},w)]$ is idempotent complete.

	As explained in \ref{subsect.factorizations}, the category
	$[\MF(\widehat{R},w)]$ is equivalent to the stable category associated
	to the Frobenius category $\MCM(\widehat{R}/w)$ of maximal Cohen-Macaulay modules
	over $\widehat{R}/w$. By the classical result \cite[Remark on page 566]{swan}, the endomorphism algebra $\End(M)$ of an indecomposable module $M$ over a complete local ring is local, i.e. the sum of two
	non-units is a non-unit. Now let $e$ be an element in $\End(M)$ whose
	image $\overline{e}$ in the stable endomorphism algebra
	$\underline{\End}(M)$ is a non-trivial idempotent. 
	In particular, $\overline{e}$ and $1 - \overline{e}$ are non-units.
	This certainly implies that $e$ and $1-e$ are non-units in $\End(M)$ and
	therefore $1 = e + (1-e)$ is a non-unit which is a contradiction. So
	$\underline{\End}(M)$ does not contain non-trivial idempotents which
	implies the statement.
\end{proof}
	
The lemma enables us to describe the category $\widehat{\MF(R,w)}_{\pe}$
explicitly. 

\begin{theo}\label{formal.idempotent}
	There exists an isomorphism in $\Ho(\dgku)$
	\[
	\widehat{\MF(R,w)}_{\pe} \simeq
	\MF(\widehat{R},w)\text{.}
	\]
	In particular, the idempotent completion of $[\MF(R,w)]$ in
	$[\widehat{\MF(R,w)}]$ is equivalent
	to $[\MF(\widehat{R},w)]$.
\end{theo}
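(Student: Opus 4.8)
The plan is to bootstrap from the already-established equivalences by comparing the triangulated hull of $\MF(R,w)$ with that of $\MF(\widehat R,w)$. First I would observe that there is a natural dg functor $\MF(R,w) \to \MF(\widehat R,w)$ given by applying $-\otimes_R \widehat R$ to the underlying free modules and the twisted differentials; this makes sense because $w$ maps to $w$ under $R \to \widehat R$. I would then want to show that this functor induces an isomorphism on triangulated hulls, i.e. $\widehat{\MF(R,w)}_{\pe} \simeq \widehat{\MF(\widehat R,w)}_{\pe}$ in $\Ho(\dgku)$. Combined with Lemma \ref{idem.complete}, which says $\widehat{\MF(\widehat R,w)}_{\pe} \simeq \MF(\widehat R,w)$, this yields the theorem.

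The key mechanism for the middle step is Theorem \ref{equivalence}(3) applied on both sides: over $R$ it identifies $\widehat{\MF(R,w)}_{\pe}$ with $\widehat{A}_{\pe}$, where $A = \End(k^{\stab})$ is the dg algebra of polynomial differential operators on $R\langle\theta_1,\dots,\theta_n\rangle$ with differential $[\delta,-]$; over $\widehat R$ it identifies $\widehat{\MF(\widehat R,w)}_{\pe}$ with $\widehat{\widehat A}_{\pe}$, where $\widehat A$ is the same construction carried out over $\widehat R$, i.e. $\widehat A = A \otimes_R \widehat R$. So the crux is to show that the base-change map $A \to \widehat A$ of dg algebras is a quasi-isomorphism. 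Once that is known, it induces a quasi-equivalence $\widehat A \simeq \widehat{\widehat A}$, hence $\widehat A_{\pe} \simeq \widehat{\widehat A}_{\pe}$, and stringing the isomorphisms together gives the result. I would also remark that the compatibility of the various functors — that the generator $k^{\stab}$ over $R$ goes to the generator $k^{\stab}$ over $\widehat R$ under base change — is what makes the diagram commute.

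The main obstacle is precisely the claim that $A \to A\otimes_R \widehat R$ is a quasi-isomorphism of complexes. The underlying graded module of $A$ is finite free over $R$ (it is $R\langle\theta_i,\partial/\partial\theta_i\rangle$, which as an $R$-module is free of rank $4^n$), so $A \otimes_R \widehat R$ is just the completion of a bounded complex of finite free $R$-modules. For such a complex, completion is exact and commutes with taking cohomology; moreover the cohomology $\H^*(A) = \underline{\End}(k^{\stab})$ is, by Corollary \ref{residue.dual}, a finite-dimensional $k$-vector space (it is built from $\Tor^S_i(k,k)$-type terms — more precisely it is supported at $\m$ and annihilated by a power of $\m$ by the isolated singularity hypothesis), hence already $\m$-adically complete and unchanged by the base change $R \to \widehat R$. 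So $\H^*(A \otimes_R \widehat R) \cong \H^*(A) \otimes_R \widehat R \cong \H^*(A)$, and the map is a quasi-isomorphism. I would need to be careful to invoke the right finiteness statement — that the cohomology of $A$ is finite-dimensional over $k$ — which follows from the isolated singularity assumption via the results of Section \ref{sect.generator}, and then the rest is the standard fact that $-\otimes_R \widehat R$ is flat and exact on finitely generated modules and hence on bounded complexes of finite free modules. The final sentence about idempotent completions is then just the passage to homotopy categories: $[\widehat{\MF(R,w)}_{\pe}]$ is by Corollary \ref{split} the idempotent completion of $[\MF(R,w)]$ inside $[\widehat{\MF(R,w)}]$, and we have just identified it with $[\MF(\widehat R,w)]$.
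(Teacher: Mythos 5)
Your proposal is correct and follows essentially the same route as the paper: combine Lemma \ref{idem.complete} with Theorem \ref{equivalence}(3) applied over both $R$ and $\widehat{R}$, and reduce everything to the fact that the natural map $A_{(R,w)} \to A_{(\widehat{R},w)} = A_{(R,w)} \otimes_R \widehat{R}$ is a quasi-isomorphism. The paper simply asserts this last point as clear from the explicit description of the two dg algebras, whereas you supply the justification (flatness of $\widehat{R}$ over $R$ plus finite-dimensionality of $\H^*(A)$ over $k$), which is a valid way to make it precise.
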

\begin{proof}
	Combining Lemma \ref{idem.complete} with Theorem \ref{equivalence}, we
	obtain an isomorphism in $\Ho(\dgku)$
	\[
	\MF(\widehat{R}, w) \stackrel{\simeq}{\lra}
	\widehat{A_{(\widehat{R},w)}}_{\pe}\text{,}
	\]
	where $A_{(\widehat{R},w)}$ denotes the endomorphism dg algebra of the
	stabilized residue field in the category $\MF(\widehat{R},w)$.
	On the other hand denoting the analogous dg algebra for $\MF(R,w)$ by
	$A_{(R,w)}$ we have an isomorphism
	\[
	\widehat{\MF(R, w)}_{\pe} \stackrel{\simeq}{\lra}
	\widehat{A_{(R,w)}}_{\pe}\text{,}
	\]
	by Theorem \ref{equivalence}. There is a natural inclusion map of dg
	algebras $A_{(R,w)} \to A_{(\widehat{R},w)}$ which, using our explicit
	description of both algebras, is a quasi-isomorphism. Indeed, we can
	filter the complex underlying $A_{(R,w)}$ by the total degree in the
	variables $\partial_i$. The associated graded complex is of the
	form
	\[
	R\left<\theta_1, \ldots, \theta_n \right> \otimes_R R\left<
	\partial_1,\ldots,\partial_n \right>
	\]
	where the complex $R\left<\theta_1, \ldots, \theta_n\right>$ is the Koszul
	complex of the regular sequence $x_1, \ldots, x_n$ and the complex $R\left<
	\partial_1,\ldots,\partial_n\right>$ has zero differential. Thus, the
	cohomology is given by 
	$k\left<\partial_1,\ldots,\partial_n\right>$.
	Introducing the analogous filtration on $A_{(\widehat{R},w)}$, the
	inclusion map $A_{(R,w)} \to A_{(\widehat{R},w)}$ respects both
	filtrations and the induced map on spectral sequences is an isomorphism
	on the second page. Therefore, the inclusion map is a quasi-isomorphism.
	This weak equivalence implies an isomorphism in $\Ho(\dgku)$ between
	$\widehat{A_{(R,w)}}_{\pe}$ and $\widehat{A_{(\widehat{R},w)}}_{\pe}$.
\end{proof}

In \cite{orlov-2009}, the relation between idempotent completion and formal
completion is studied in a more general context on the level of
triangulated categories. 

\subsection{Quadratic Hypersurfaces}

Let $R = k[ [x_1,\dots,x_n]]$ and let $w \in R$ be a non-zero quadratic form. Assuming
$\operatorname{char}(k) \ne 2$ we may diagonalize $w$, so after a change of
coordinates we have $w = \sum_{i=1}^n a_i x_i^2$ with $a_i \in k$. Since we
assume the singularity to be isolated, all coefficients $a_i$ are non-zero. We write $w =
x_i w_i$ setting $w_i = a_i x_i$. With above notation we have 
\[
A = R\left<\theta_1, \ldots, \theta_n, \partial_1,
\dots, \partial_n \right>
\]
with differential $d$ given by
\begin{align*}
\theta_i & \mapsto  x_i\\
\partial_i & \mapsto a_i x_i \text{.}
\end{align*}
The elements $\overline{\partial_i} = \partial_i - a_i \theta_i$ are cycles in
$A$ and generate the cohomology $\H^*(A)$ as a $k$-algebra. Note that the relations
\begin{align*}
\overline{\partial_i}^2 &= -a_i \\ 
\overline{\partial_i}\;\overline{\partial_j} &=
-\overline{\partial_j}\;\overline{\partial_i} \quad \text{for $i \ne j$}
\end{align*}
imply that the $k$-subalgebra of $A^{\op}$ generated by
$\left\{\overline{\partial_i}\right\}$ is isomorphic to the Clifford algebra $\operatorname{Cl}(w)$ corresponding to
the quadratic form $w$. Furthermore, the inclusion 
\[
(\operatorname{Cl}(w),0) \subset (A^{\op},d)
\]
is a quasi-isomorphism establishing the formality of the dg algebra $A^{\op}$.
Therefore, in the quadratic case Theorem \ref{equivalence} reproduces a variant
of the results in \cite{buchweitz2} describing matrix factorizations as modules
over the Clifford algebra $\operatorname{Cl}(w)$ (see also \cite[Chapter
14]{yoshino}).

\subsection{A minimal $A_\infty$ model}

If $w$ is of degree greater than $2$, the algebra $A$ will
not be formal. However, there is a well-known structure which allows us
nevertheless to pass to the cohomology algebra of $A$: the structure of
an $A_\infty$ algebra. For the basic theory we refer the reader to
\cite{stasheff,chen,keller-1999,kont.soib}. The relevance to our situation is the following.
In addition to the usual multiplication on the cohomology algebra of $A$ there
exist higher multiplications. In a precise sense, they measure the failure 
of being able to choose a multiplicatively closed set of representatives of
$\H^*(A)$ in $A$. The system of higher multiplications forms an
$A_\infty$-algebra and as such $\H^*(A)$ will be quasi-isomorphic to $A$.

One of the motivations for passing from $A$ to $\H^*(A)$ is that the latter
algebra is finite dimensional over $k$; it is referred to as a \emph{minimal
model of} $A$. By the general theory in \cite{hasegawa} we
obtain a description of the category of matrix factorizations as a category of modules over the
$A_\infty$ algebra $\H^*(A)$. We do not spell out a precise formulation of
this equivalence but restrict ourselves to the description of the $A_\infty$
structure in some special cases.

We use the method described in \cite[6.4]{kont.soib2} (also cf. \cite{gug.stash, merkulov}) to compute the $A_\infty$ structure in terms of trees. 
The setup is as follows.
Let $X$ be an $A_\infty$ algebra. This structure can be described by the data of
a coderivation $q$ of degree $1$ on the coalgebra $\operatorname{T}\! X[1]$ such
that $[q,q] = 0$. The
Taylor coefficients of $q = q_1 + q_2 + q_3 + \ldots$ are maps
\[
q_k : X[1]^{\otimes k} \to X[1]
\]
which describe, after introducing the sign shifts accounting for the transfer
from $X[1]$ to $X$, the higher multiplications $m_k$ on $X$.
Now assume, that
we are given an idempotent $p : X \to X$ of degree $0$ commuting with $d$. The
image of $p$ is therefore a subcomplex of $X$ which we denote by $Y$. Let $i$
denote the inclusion of $Y$ into $X$. We also
assume, that a homotopy $h : X \to X[1]$ between $\id$ and $p$ is given, i.e.
\[
dh + hd = \id - p
\]
identifying $p$ as a homotopy equivalence between $X$ and $Y$. In this
situation, the $A_\infty$-structure induced on $Y$ can be calculated explicitly
in terms of trees as for example described in \cite[6.4]{kont.soib2}.
This method can in principle be used to determine the $A_\infty$ structure
on $\H^*(A)$ for $R = k[ [x_1,\dots,x_n]]$. However the computation is rather tedious and the
author has not been able to find closed formulas for all higher multiplications.
Since we will not use the results of this calculation elsewhere, we will 
define the contracting homotopy $h$ and only state the results of the tree
calculation.

We start with the one-dimensional case $R = k[ [x]]$ which
will serve as a guideline for what to do in the higher dimensional case.
For an element $a$ in $k[ [x]]$ we define $\overline{a}$ to be the unique
element in $x k[ [x]]$ defined by
\[
a = a_0 + \overline{a}
\] 
with $a_0 \in k$.
The algebra $A$ is of the form
\[
A \cong k[ [x]] \otimes_k k\left< \theta, \partial \right>
\]
with differential $d$ given by
\begin{align*}
\theta & \mapsto x\\
\partial & \mapsto \frac{w}{x} \text{.}
\end{align*}
We define a homotopy $h$ contracting $A$ onto its cohomology. The assignment
$h(a) = \frac{\overline{a}}{x}\theta$ for $a \in k[ [x]]$ extends to a unique
$k\left< \theta,\partial \right>$-linear homotopy of $A$.
A simple calculation shows that the map
\[
p = \id - [d, h]
\]
is a projection and we have maps of complexes
\[
\xymatrix{
k[ [x]] \otimes_k k\left< \theta, \partial \right>
\ar@<+.5ex>[d]^p \\ \ar@<+.5ex>[u]^{\iota} k\left<\overline{\partial}\right>
}
\]
where the differential on $k\left<\overline{\partial}\right>$ is $0$ and 
$\iota(\overline{\partial}) = \partial - \frac{w}{x^2}\theta$.

We have determined all the data needed for the tree formula. It leads to the
following higher multiplications.

\begin{theo} Let $R = k[ [x]]$ with $w = \sum_{i=2}^{\infty} r_i
x^i$. Then the unital $A_\infty$-structure on $\H^*(A) \cong k 1 \oplus k
\overline{\partial}$ induced by the above homotopy is uniquely determined by the formulas
\[
m_i(\overline{\partial}, \dots, \overline{\partial}) = \pm r_i  \text{.}
\]
\end{theo}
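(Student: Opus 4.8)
The plan is to apply the homological perturbation / tree formula for the transferred $A_\infty$-structure, as in \cite[6.4]{kont.soib2}, exploiting that $A$ is small enough that every ingredient can be computed by hand. Write $W := w/x^2 = \sum_{i \ge 2} r_i x^{i-2} \in k[[x]]$, so that $b := \iota(\overline\partial) = \partial - W\theta$. The first step is a short list of elementary identities in $A \cong k[[x]]\langle \theta, \partial \rangle$ (recall $\theta^2 = \partial^2 = 0$ and $\partial\theta + \theta\partial = 1$): one has $db = 0$, $b^2 = -W$, and $h(b) = 0$; and for a general element $\xi = a + a_\theta \theta + a_\partial \partial + a_{\theta\partial} \theta\partial$ with coefficients in $k[[x]]$ one computes $h(\xi) = \tfrac{\overline{a}}{x}\theta + \tfrac{\overline{a_\partial}}{x}\theta\partial$ and $p(\xi) = a_0 \cdot 1 + (a_\partial)_0 \cdot b$. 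From these one extracts the two structural facts that drive the whole argument: $\operatorname{im}(h)$ is contained in the left ideal $J := k[[x]]\theta + k[[x]]\theta\partial$ of $A$, and both $h$ and $p$ annihilate $J$. (One also checks $h^2 = 0$, $h\iota = 0$ and $ph = 0$, so the clean form of the perturbation lemma applies to the given $h$ without any adjustment.)

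Next recall that $m_n(\overline\partial, \ldots, \overline\partial)$ is, up to a sign, a sum $\sum_T \pm \mu_T$ over planar rooted binary trees $T$ with $n$ leaves, where $\mu_T$ is computed by feeding $b$ into each leaf, multiplying in $A$ at each internal vertex, applying $h$ to the output of every internal vertex other than the root, and applying $p$ at the root. The second step is to show that $\mu_T = 0$ unless $T$ is the right comb $\rho_n = (\bullet(\bullet(\cdots(\bullet\bullet)\cdots)))$. Indeed, suppose some internal vertex $v$ of $T$ has an internal vertex $c$ as its left child. Then the output of $c$ has been passed through $h$, so it lies in $\operatorname{im}(h) \subseteq J$; since $J$ is a left ideal, the product computed at $v$ again lies in $J$. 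If $v$ is not the root, applying $h$ to this product yields $0$, and a zero output propagates up the tree to give $\mu_T = 0$; if $v$ is the root, applying $p$ yields $0$. Since the right comb is precisely the unique planar binary tree in which every internal vertex has a leaf as its left child, every other tree contributes nothing.

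The third step is to evaluate the surviving term $\mu_{\rho_n}$. Starting at the innermost vertex one gets $b^2 = -W$; then, setting $V_0 := W$ and $V_j := \overline{V_{j-1}}/x$, a one-line induction along the comb shows that the value carried above the $j$-th vertex (after applying $h$) is $c_j = -V_{j-1}\theta$ and that $b \cdot c_j = -V_{j-1} + V_{j-1}\theta\partial$. At the root one therefore obtains $p(b \cdot c_{n-1}) = -(V_{n-2})_0 \cdot 1$, and since $V_j = r_{j+2} + r_{j+3}x + \cdots$ this equals $-r_n \cdot 1$ (the case $n = 2$ being simply $[b^2] = -r_2 \cdot 1$). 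Hence $m_n(\overline\partial, \ldots, \overline\partial) = \pm r_n \cdot 1$ for every $n \ge 2$. Finally, since $1$ is a strict unit for $\H^*(A)$ (it is a cycle annihilated by $h$) and $\overline\partial$ generates $\H^*(A)$, unitality determines every $m_n$ from its value on $\overline\partial^{\otimes n}$, which is the asserted uniqueness.

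I expect the only real work to be bookkeeping: fixing the precise signs in the tree formula — which the statement conveniently absorbs into the "$\pm$" — and checking carefully that the side conditions $h^2 = 0$, $h\iota = 0$, $ph = 0$ hold for the homotopy defined above, so that the perturbation lemma applies in its clean tree form and the "zero output propagates to the root" step is legitimate. The two genuinely structural observations — that $\operatorname{im}(h)$ lies in the left ideal generated by $\theta$, and that $h$ and $p$ both kill that ideal — are the heart of the matter; once they are isolated, the combinatorial reduction to the right comb and the comb computation are both routine.
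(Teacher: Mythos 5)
Your proposal is correct and follows essentially the route the paper itself takes but only sketches: transfer the $A_\infty$-structure along the stated $\iota$, $p$, $h$ via the Kontsevich--Soibelman tree formula, show that only the right comb contributes because $\operatorname{im}(h)$ lies in $J = k[[x]]\theta + k[[x]]\theta\partial$ while $h$ and $p$ kill $J$, and then evaluate the comb to get $\pm r_n$, with strict unitality following from the side conditions $h^2=0$, $h\iota=0$, $ph=0$. One small slip: $J=\theta A$ is a \emph{right} ideal, not a left ideal (indeed $\partial\theta = 1-\theta\partial \notin J$), but the absorption property you actually invoke, namely $J\cdot A\subseteq J$ for the product $h(\dots)\cdot(\text{right input})$ at a vertex with an internal left child, is precisely the right-ideal property, so the reduction to the right comb and the final computation stand as written.
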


In the more general case $R = k[ [x_1,\dots,x_n]]$ one can still construct an
explicit contracting homotopy, however the resulting formulas get more
complicated. The following result does not determine the $A_\infty$ structure
completely but at least shows some interesting properties.

\begin{theo} \label{Aformula} Let $R = k[ [x_1, \dots, x_n]]$ with 
$w = \sum_{\underline{i}} r_{\underline{i}} x^{\underline{i}}$ where
$\underline{i} = (i_1,\dots, i_n)$ denotes a multi-index. Then there exists a
contracting homotopy of $A$ such that the induced $A_\infty$ structure on
$\H^*(A)$ has the following properties
\begin{itemize}
\item The underlying associative algebra on $\H^*(A)$ is given by the Clifford
algebra corresponding to the quadratic term of $w$ as described in the previous
subsection.
\item For the generators $\overline{\partial_1}, \dots, \overline{\partial_n}$
we have 
\[
m_i(\underbrace{\overline{\partial_1},\dots,\overline{\partial_1}}_{i_1},\underbrace{\overline{\partial_2},\dots,
\overline{\partial_2}}_{i_2},\dots,\underbrace{\overline{\partial_n},\dots,
\overline{\partial_n}}_{i_n})
= \pm r_{\underline{i}}
\]
where $i = |\underline{i}|$.
\end{itemize}
\end{theo}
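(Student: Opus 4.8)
The plan is to generalize the one-dimensional homotopy construction to the polynomial ring in $n$ variables and then read off the two assertions from the combinatorics of the tree formula. First I would write $A \cong k[[x_1,\dots,x_n]] \otimes_k k\langle \theta_1,\dots,\theta_n,\partial_1,\dots,\partial_n\rangle$, with $d\theta_i = x_i$ and $d\partial_i = w_i$, where $w = \sum_i x_i w_i$ is the chosen decomposition. As in the one-variable case, the key is to build a $k\langle\theta,\partial\rangle$-linear contracting homotopy $h$ onto the cohomology. I would do this by an inductive averaging procedure on the variables: for a formal power series $a$, split $a = a_0 + \overline{a}$ with $\overline{a}$ in the maximal ideal, and use that $\overline{a}$ can be written (noncanonically, but we fix a choice) as $\sum_i x_i a^{(i)}$; then set $h(a) = \sum_i a^{(i)} \theta_i$, suitably symmetrized so that $p = \id - [d,h]$ is again idempotent. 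Verifying that $p$ is a projection onto a complex with zero differential, quasi-isomorphic to the Clifford algebra on the quadratic part, is the first technical point; it follows because modulo the quadratic part the higher-order terms of $w$ are killed by the homotopy, exactly as the $r_i x^i$ terms are handled in the $n=1$ statement.

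Next I would set up the tree formula of \cite[6.4]{kont.soib2} with the data $(i,p,h)$ just constructed, so that $m_k$ on $\H^*(A)$ is a sum over planar binary trees with $k$ leaves, with $h$ decorating internal edges, $m_2$ (the multiplication in $A$) at internal vertices, and $i$ resp.\ $p$ at the leaves resp.\ the root. For the first bullet, one notes that a tree contributes to $m_2$ only via the single tree with one internal vertex, and that $[d,h] = \id - p$ forces $m_2$ on the image of $p$ to coincide with the associative product inherited from $A$; since that product on the generators $\overline{\partial_i}$ reproduces the Clifford relations $\overline{\partial_i}^2 = -a_i$, $\overline{\partial_i}\,\overline{\partial_j} = -\overline{\partial_j}\,\overline{\partial_i}$ exactly as in the quadratic subsection, the underlying associative algebra is $\operatorname{Cl}(w_{(2)})$.

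For the second bullet I would track which trees survive when all $k = |\underline{i}|$ inputs are the $\overline{\partial}$'s, $i_1$ copies of $\overline{\partial_1}$ through $i_n$ copies of $\overline{\partial_n}$. The homotopy $h$ acts nontrivially only on the power-series factor $k[[x_1,\dots,x_n]]$, and each application of $h$ trades a factor $x_j$ for a $\theta_j$; since $\iota(\overline{\partial_j}) = \partial_j - (\text{stuff})\,\theta_j$ and $d\partial_j = w_j$, composing $k$ of these generators and feeding the result through the tree produces, in the relevant component, precisely the coefficient of the monomial $x_1^{i_1}\cdots x_n^{i_n}$ in $w$, i.e.\ $\pm r_{\underline{i}}$, because the only way for the tree to land in the $k\cdot 1$ summand with the right $\theta$-degree bookkeeping is to extract that single Taylor coefficient of $w$. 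The main obstacle, and the reason closed formulas for all $m_k$ (with mixed non-$\overline{\partial}$ inputs, or with signs made explicit) are not attempted here, is the sign and symmetrization combinatorics of the many surviving trees in the general mixed case; but along the ``pure'' diagonal $(\overline{\partial_j},\dots,\overline{\partial_j},\dots)$ the surviving trees collapse to a manageable family governed by a single monomial of $w$, which is exactly what makes the stated formula clean. I would close by remarking that the $n=1$ theorem is the special case, and that the appearance of the higher coefficients $r_{\underline{i}}$ of $w$ as higher products is the promised ``neat relation'' between the $A_\infty$-structure and the superpotential.
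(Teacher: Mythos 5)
Your overall strategy coincides with the paper's: build an explicit contraction of $A$ onto $\H^*(A)$ generalizing the one-variable case, feed the data $(\iota,p,h)$ into the Kontsevich--Soibelman tree formula, and read off $m_2$ (Clifford algebra of the quadratic part) and the higher products on the $\overline{\partial_j}$'s. But the crux of the theorem is the actual construction of a homotopy with the required properties, and here your argument has a genuine gap. You propose choosing a decomposition $\overline{a}=\sum_i x_i a^{(i)}$ and setting $h(a)=\sum_i a^{(i)}\theta_i$, ``suitably symmetrized so that $p=\id-[d,h]$ is again idempotent.'' This is asserted, not constructed: for a generic (or symmetrized) splitting, $p=\id-[d,h]$ is not a projection and $h$ does not satisfy the side conditions needed for the transfer, and your stated justification (``modulo the quadratic part the higher-order terms of $w$ are killed by the homotopy'') has nothing to do with idempotency, which is a structural property of $h$ relative to $d$. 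The paper avoids exactly this problem by a different device: it chooses a \emph{triangular} decomposition $w=x_1w_1+x_2w_2+\cdots$ with $w_1\in k[[x_1]]$, $w_2\in k[[x_1,x_2]]$, etc., applies the one-variable construction one variable at a time (division by $x_j$ discarding the remainder), and composes the resulting contractions via the standard formula $h=h_2+\iota_2\circ h_1\circ p_2$, $p=p_1\circ p_2$, $\iota=\iota_2\circ\iota_1$. Each stage is literally the $n=1$ construction, so idempotency and the contraction identities come for free.

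This difference is not cosmetic, because your tree bookkeeping for the second bullet relies on the claim $\iota(\overline{\partial_j})=\partial_j-(\text{stuff})\,\theta_j$, i.e.\ a correction only in the $\theta_j$ direction. With the paper's homotopy this is false: already for $n=2$ one has $\iota(\overline{\partial_2})=\partial_2-\frac{w_2}{x_2}\theta_2-\frac{r_2}{x_1}\theta_1$, with a cross-term coming from the remainder $r_2\in k[[x_1]]$ of dividing $w_2$ by $x_2$; such cross-terms (and the particular ordering of the inputs $\overline{\partial_1},\dots,\overline{\partial_1},\overline{\partial_2},\dots$ in the statement, which matches the triangular structure) enter the tree computation and cannot be waved away. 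So to complete your proof you would either have to exhibit and verify your symmetrized homotopy explicitly — including the precise formulas for $\iota(\overline{\partial_j})$ — or adopt the paper's iterated construction, after which the tree evaluation along the ``pure'' string of generators does reduce, as you indicate, to extracting the Taylor coefficient $r_{\underline{i}}$ of $w$.
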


Note that the formulas on the generators $\overline{\partial}_i$ do not
determine the $A_\infty$-structure. Still, one instructive feature is the direct relation to the coefficients of $w$. Formality can
thereafter only be expected if $w$ is quadratic. Further, the
formulas raise the question of how much information about $w$ can be recovered
from the weak equivalence class of the dg algebra $A$. This question is explored
in \cite{efimov} by using Kontsevich formality.

We illustrate the determination of the contracting homotopy for $R = k[ [x_1, x_2]]$. Note that it is possible
to write $w = x_1 w_1 + x_2 w_2$ with $w_1 \in k[ [x_1]]$ and $w_2 \in k[ [x_1,
x_2]]$. Then the endomorphism algebra $A$ of the stabilized residue field is of the form
\[
k[ [x_1]]\left< \theta_1, \partial_1 \right> \otimes_{k[ [x_1]]} k[ [x_1,
x_2]]\left< \theta_2, \partial_2 \right>
\]
with differential $d$ given by
\begin{align*}
\theta_i \mapsto x_i\\
\partial_i \mapsto w_i
\end{align*}
Applying the construction from the one dimensional case twice, we obtain maps of
complexes
\[
\xymatrix@R=0.5ex{
k[ [x_1]] \left<\theta_1, \partial_1 \right> \otimes_{k[ [x_1]]} k[ [x_1,
x_2]]\left<\theta_2, \partial_2\right> \ar@<+.5ex>[dd]^{p_2} \\ 
& \iota_2 \circ p_2 = id - [d,h_2] \\
k[ [x_1]] \left<\theta_1, \partial_1 \right> \otimes_{k[ [x_1]]} k[ [x_1]]\left<
\widetilde{\partial_2}\right> \ar@<+.5ex>[dd]^{p_1} \ar@<+.5ex>[uu]^{\iota_2}\\
& \iota_1 \circ p_1 = id - [d,h_1] \\
\ar@<+.5ex>[uu]^{\iota_1}   k<\overline{\partial_1}, \overline{\partial_2}>
}
\]
where $h_2(a) = \frac{a}{x_2} \theta_2$ for $a \in k[ [x_1,x_2]]$ and $h_1(b) =
\frac{b}{x_1}\theta_1$ for $b \in k[ [x_1]]$. Here, the symbol $\frac{}{y}$ denotes division by $y$, discarding the remainder.
As in the one
dimensional case, $h_1$ and $h_2$ are extended linearly to yield the homotopies
in the above diagram. 
In fact, we can collapse the sequence of homotopy equivalences to a single one
given by
\begin{align*}
\iota & =  \iota_2 \circ \iota_1\\
p &= p_1 \circ p_2\\
h &= h_2 + \iota_2 \circ h_1 \circ p_2
\end{align*}
The inclusions into $A$ are given by
\begin{align*}
\overline{\partial_1} & = \partial_1 - \frac{w_1}{x_1}\theta_1\\
\overline{\partial_2} & = \partial_2 - \frac{w_2}{x_2}\theta_2 -
\frac{r_2}{x_1}\theta_1
\end{align*}
where $r_2 \in k[ [x_1]]$ is the remainder of the division of $w_2$ by $x_2$.

An application of the tree formula yields the formulas described in Theorem
\ref{Aformula}.
This calculation generalizes to an arbitrary number of variables.\\

\section{Derived Morita theory}
\label{sect.morita}

In this section we will use the operation
\[
(R,w) \otimes_k (R',w') = (R \otimes_k R', w \otimes 1 + 1 \otimes w') \text{.}
\]
In order for it to be well-behaved, we have to impose conditions on
$(R,w)$. We require $(R,w)$ and $(R',w')$ to satisfy condition (\ref{b}) from \ref{subsect.nonlocal}.
These assumptions on $(R,w)$ and $(R',w')$ ensure that $(R \otimes_k R', w
\otimes 1 + 1 \otimes w')$ will still satisfy condition $(\ref{b})$, making the
localization results of \ref{subsect.localization} and Corollary
\ref{equivalence.nonlocal} applicable. Also note, that due to
\ref{subsect.localization}, we can without loss of generality assume that $R$
and $R'$ are local. 

As proved in \cite[Section 6]{toen.morita} the category $\Ho(\dgku)$ admits internal
homomorphism categories satisfying the usual adjunction
\[
[U \otimes^{\L} T, T'] \cong [ U, \IRHom( T, T') ] \text{.}
\]
We will use Morita theory 
\cite[Section 7]{toen.morita} to determine the dg category of functors between
two matrix factorization categories.
As an application, we calculate the Hochschild chain and cochain complexes of these categories.

\subsection{Internal homomorphism categories}
Given two categories $\MF^\infty(R,w)$ and
$\MF^\infty(R',w')$ there is a natural class of dg functors between them. Namely,
every object $T$ in the category $\MF^\infty(R \otimes_k R', - w \otimes
1+ 1\otimes w')$ defines a dg functor via the association
\[
\MF^\infty(R,w) \to \MF^\infty(R',w'), \; X \mapsto X
\otimes_R T \text{,}
\]
where the tensor product is $\Zt$-graded.
In other words, the object $T$ acts as the kernel of an integral
transform. We will show that every continuous functor between matrix factorization
categories is isomorphic to an integral transform.

By Theorem \ref{equivalence}, there is an
isomorphism
\[
\MF^{\infty}(R,w) \stackrel{\simeq}{\lra} \widehat{A}
\]
where $A$ is the $2$-periodic endomorphism dg algebra of the compact generator $E = k^{\stab}$.
The matrix factorization
$E^{\vee} = \Hom_R^{\Zt}(E,R)$
is a compact generator of the category $\MF^\infty(R,-w)$. Indeed, as already
pointed out in the proof of Lemma \ref{residue.dual}, one explicitly verifies that
$E^{\vee}$ stabilizes a shift of the residue field. Therefore, 
Theorem \ref{equivalence} gives a natural isomorphism
\[
\MF^{\infty}(R,-w) \stackrel{\simeq}{\lra} \widehat{A^{\op}} \text{.}
\]

Consider a second hypersurface singularity $(R',w')$ with corresponding category
$\MF^\infty(R',w')$ and stabilized residue field $E'$. 
One immediately identifies the object $E \otimes_k
E'$ of the category $\MF^\infty(R\otimes_k R', w \otimes 1 + 1
\otimes w')$ as a stabilization of the residue field. Therefore, we obtain an
isomorphism of dg algebras
\[
A_{(R\otimes_k R',w \otimes 1 + 1 \otimes w')} \stackrel{\cong}{\lra} A_{(R,w)}
\otimes A_{(R',w')}\text{.}
\]

Combining Theorem 1.4 in \cite{toen.morita} with the above observations and Corollary \ref{equivalence.nonlocal},
we obtain the following result.

\begin{theo} There exists a natural isomorphism in $\Ho(\dgku)$ 
\[
\IRHom_c( \MF^\infty(R,w), \MF^\infty(R',w')) \cong \MF^\infty(R \otimes_k R', - w
\otimes 1 + 1 \otimes w') \text{.}
\]
\end{theo}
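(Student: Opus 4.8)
The plan is to reduce the statement to an application of To\"en's Morita-theoretic computation of internal Homs together with the explicit generator descriptions already established. By Theorem~\ref{equivalence} we have isomorphisms $\MF^\infty(R,w) \simeq \widehat{A}$ and $\MF^\infty(R',w') \simeq \widehat{A'}$ in $\Ho(\dgku)$, where $A = A_{(R,w)}$ and $A' = A_{(R',w')}$ are the $2$-periodic endomorphism dg algebras of the respective stabilized residue fields $E$ and $E'$. Since $E$ and $E'$ are compact generators, both $\widehat{A}$ and $\widehat{A'}$ are compactly generated and hence fit into the framework of \cite[Section~7]{toen.morita}. First I would invoke the $2$-periodic analogue of \cite[Theorem~1.4, resp. Théorème~7.2]{toen.morita}, which identifies $\IRHom_c(\widehat{A},\widehat{A'})$ with $\widehat{A^{\op} \otimes A'}$ (the dg derived category of $A^{\op}$-$A'$-bimodules), continuous functors corresponding to tensoring with a bimodule-kernel.

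Second, I would identify $A^{\op} \otimes A'$ with the endomorphism dg algebra of a stabilized residue field in a suitable matrix factorization category. Here the key input is the computation already recorded in the excerpt: for the product singularity one has a natural isomorphism of dg algebras
\[
A_{(R\otimes_k R', w\otimes 1 + 1\otimes w')} \xrightarrow{\ \cong\ } A_{(R,w)} \otimes A_{(R',w')}\text{,}
\]
obtained by observing that $E \otimes_k E'$ stabilizes the residue field of $R \otimes_k R'$. Applying this with $(R,w)$ replaced by $(R,-w)$ — using that $E^\vee = \Hom_R(E,R)$ is a compact generator of $\MF^\infty(R,-w)$ stabilizing a shift of the residue field, so that $A_{(R,-w)} \simeq A^{\op}$ as noted in the excerpt — yields $A_{(R\otimes_k R', -w\otimes 1 + 1\otimes w')} \cong A^{\op} \otimes A'$. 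Combining this with Theorem~\ref{equivalence} applied to the product hypersurface singularity $(R\otimes_k R', -w\otimes 1 + 1\otimes w')$ (which is again isolated, since the Tyurina algebra of the product is the tensor product of the Tyurina algebras, hence finite-dimensional) gives
\[
\MF^\infty(R\otimes_k R', -w\otimes 1 + 1\otimes w') \simeq \widehat{A^{\op}\otimes A'}\text{.}
\]

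Third, I would chain these identifications: $\IRHom_c(\MF^\infty(R,w),\MF^\infty(R',w')) \simeq \IRHom_c(\widehat{A},\widehat{A'}) \simeq \widehat{A^{\op}\otimes A'} \simeq \MF^\infty(R\otimes_k R', -w\otimes 1 + 1\otimes w')$, all in $\Ho(\dgku)$, and check that under this chain of isomorphisms the kernel formalism matches: a bimodule-kernel $T$ acts by $M \mapsto M \otimes_A^{\L} T$, which under the equivalences corresponds precisely to the $\Zt$-graded integral transform $X \mapsto X \otimes_R T$ described at the start of the section. This last compatibility is essentially a matter of unwinding the Yoneda embeddings and is the part requiring the most care, though no new idea. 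The main obstacle I anticipate is purely bookkeeping: verifying that the $2$-periodic reformulation of To\"en's internal-Hom theorem is legitimately available (the excerpt asserts that all of \cite[Sections~4--7]{toen.morita} transfer verbatim to $\C(\ku)$-enriched categories, so I would simply cite that) and that the dg-algebra isomorphism $A_{(R\otimes R', \cdot)} \cong A^{\op}\otimes A'$ is strictly compatible with the identification of $E^\vee$ with a stabilized shifted residue field, so that signs and shifts (the parity $\epsilon$ from Corollary~\ref{residue.dual}) do not obstruct the final identification of kernels.
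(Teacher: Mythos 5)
Your proposal is correct and follows essentially the same route as the paper: identify both categories with $\widehat{A}$ and $\widehat{A'}$ via Theorem \ref{equivalence}, invoke the $2$-periodic version of To\"en's Theorem 1.4 to get $\IRHom_c(\widehat{A},\widehat{A'})\simeq\widehat{A^{\op}\otimes A'}$, and identify $A^{\op}\otimes A'$ as the endomorphism dg algebra of $E^{\vee}\otimes_k E'$, which stabilizes the residue field of the product singularity. The kernel compatibility you flag at the end is precisely what the paper records separately as the proposition immediately following the theorem.
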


We conclude the subsection with a compatibility statement.

\begin{prop}
	Let $T$ be an object in $\MF^\infty(R \otimes_k R', - w \otimes 1 + 1
	\otimes w')$, let $E$ resp. $E'$ be the compact generators of
	$\MF^\infty(R,w)$ resp. $\MF^\infty(R',w')$ as constructed above. Then
	the diagram of functors
	\[
	\xymatrix@C=3cm{
	\left[ \MF^\infty(R,w) \right] \ar[d]_{\Hom(E,-)} \ar[r]^{- \otimes_R T} &
	\ar[d]^{\Hom(E',-)} \left[\MF^\infty(R',w') \right]\\
	\D(A^{\op}) \ar[r]^{- \otimes^{\operatorname{L}}_A \Hom(E^{\vee}
	\otimes_k E', T)} & \D(A'^{\op})
	}
	\]
	commutes up to a natural equivalence.
\end{prop}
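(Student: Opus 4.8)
The plan is to produce an explicit natural comparison map between the two composite functors around the square, and then to reduce the assertion that it is an isomorphism to a single computation at the object $k^{\stab}$, which is legitimate because $k^{\stab}$ is a compact generator of $\left[\MF^\infty(R,w)\right]$ by Theorem~\ref{generator}.

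First I would fix conventions. Regard $\Hom(E,-)$ as the equivalence $\left[\MF^\infty(R,w)\right]\xrightarrow{\simeq}\D(A^{\op})$ of Theorem~\ref{equivalence} (with $\Hom(E,-)$ meaning the complex of morphisms in the dg category), under which $E=k^{\stab}$ corresponds to the free module $A$; likewise for $E'$, $A'$. Since $k^{\stab}$ has finite rank over $R$, for every matrix factorization $X$ of $w$ there is a canonical isomorphism of matrix factorizations of $w'$ over $R'$
\[
E\otimes_R T \;\cong\; \Hom_R(E^{\vee},T),
\]
where $T$ is viewed as an $R$-module through its first tensor factor and the result as an $R'$-module through its second. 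Combining this with the $\Zt$-graded tensor--hom adjunction across $R\otimes_k R'$ yields a natural isomorphism of complexes
\[
\Hom_{\MF(R',w')}\bigl(E',\,E\otimes_R T\bigr)\;\cong\;\Hom_{\MF(R\otimes_k R',\,-w\otimes1+1\otimes w')}\bigl(E^{\vee}\otimes_k E',\,T\bigr),
\]
and I would check that it is compatible with the right $A'=\End(E')$-action on both sides and with the left $A=\End(E)$-action: on $\Hom(E',E\otimes_R T)$ this action comes from functoriality of the integral transform applied to endomorphisms of $E$, and on $\Hom(E^{\vee}\otimes_k E',T)$ it comes from the anti-isomorphism $\End(E)\cong\End(E^{\vee})^{\op}$ followed by precomposition. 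Verifying that these two $A$-module structures correspond is a naturality statement for the adjunction, and it is here that one must be careful with the sign conventions of the $\Zt$-graded dualization.

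Next I would assemble the comparison transformation. The rule $(f,g)\mapsto (f\otimes\id_T)\circ\widetilde g$, with $f\in\Hom(E,X)$, $g\in\Hom(E^{\vee}\otimes_k E',T)$, and $\widetilde g\colon E'\to E\otimes_R T$ the mate of $g$ under the isomorphism above, is $k$-bilinear and, by the compatibility of the $A$-actions just discussed, descends to a chain map
\[
\Hom(E,X)\otimes_A\Hom(E^{\vee}\otimes_k E',T)\;\longrightarrow\;\Hom(E',X\otimes_R T),
\]
natural in $X$. Replacing $\Hom(E,X)$ by a functorial semifree $A$-module resolution (equivalently, working through the equivalence $\Hom(E,-)$) turns this into a natural transformation
\[
\theta_X\colon\;\Hom(E,X)\otimes^{\L}_A\Hom(E^{\vee}\otimes_k E',T)\;\longrightarrow\;\Hom(E',X\otimes_R T)
\]
of exact functors $\left[\MF^\infty(R,w)\right]\to\D(A'^{\op})$. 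Both of these functors commute with arbitrary coproducts: $-\otimes_R T$ and $-\otimes^{\L}_A(-)$ do so visibly, while $\Hom(E,-)$ and $\Hom(E',-)$ do so because $E$ and $E'$ are compact by Theorem~\ref{generator}. Consequently the full subcategory of $\left[\MF^\infty(R,w)\right]$ on the objects $X$ for which $\theta_X$ is an isomorphism is triangulated (by the five lemma applied to the induced map of distinguished triangles) and closed under coproducts.

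Finally, at $X=E$ one has $\Hom(E,E)=A$, so the source of $\theta_E$ is $A\otimes^{\L}_A\Hom(E^{\vee}\otimes_k E',T)=\Hom(E^{\vee}\otimes_k E',T)$, and by construction $\theta_E$ is exactly the adjunction isomorphism of the first step; in particular it is invertible. Since $k^{\stab}=E$ generates $\left[\MF^\infty(R,w)\right]$ under coproducts and triangles, the subcategory of the preceding paragraph is all of $\left[\MF^\infty(R,w)\right]$, so $\theta$ is a natural isomorphism, which is precisely the asserted commutativity of the square up to natural equivalence. The main obstacle is the first step --- setting up the $\Zt$-graded tensor--hom adjunction with consistent signs and checking that the $\End(E)$-module structure arising from the integral transform agrees with the one arising from $\End(E)\cong\End(E^{\vee})^{\op}$; everything afterwards is d\'evissage to the compact generator. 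Alternatively, one can run the argument through To\"en's Morita theory as in the preceding theorem: the integral transform $-\otimes_R T$, transported to a continuous functor $\D(A^{\op})\to\D(A'^{\op})$, is automatically of the form $-\otimes^{\L}_A N$ with $N$ its value on the free module $A$, and $N=\Hom(E',E\otimes_R T)\cong\Hom(E^{\vee}\otimes_k E',T)$ as bimodules by the computation of the first step.
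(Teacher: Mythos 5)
Your proposal is correct and follows essentially the same route as the paper: it uses the adjunction isomorphism $\Hom_{R\otimes_k R'}(E^{\vee}\otimes_k E',T)\cong\Hom_{R'}(E',E\otimes_R T)$ to define the comparison transformation via composition, notes that both composite functors are exact and commute with coproducts, checks the transformation is invertible on the compact generator $E$, and concludes by generation. The only difference is that you spell out the bimodule compatibilities and the d\'evissage that the paper leaves implicit.
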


\begin{proof}
	Using the natural isomorphism of complexes
	\[
	\Hom_{R \otimes_k R'} (E^{\vee} \otimes_k E', T) \cong \Hom_{R'}(E', E \otimes_R T)
	\]
	we obtain a natural transformation
	\[
		\Hom_R(E,-) \otimes^{\operatorname{L}}_A \Hom_{R'}(E^{\vee}
		\otimes_k E', T) \to \Hom(E', - \otimes_R T)
	\]
	via composition. Both functors respect the triangulated structure
	and commute with infinite coproducts. Evaluated on the compact generator
	$E$, the above transformation yields an isomorphism in $\D( A'^{\op})$. Therefore, it must
	be an equivalence of functors on $\left[ \MF^\infty(R,w) \right]$.
\end{proof}

\subsection{Hochschild cohomology}

One of the many neat applications of the homotopy theory 
developed in \cite{toen.morita} is the description of the Hochschild cochain
complex of a dg category as the endomorphism complex of the identity functor.
This result carries over to the $2$-periodic case and we will use it to
determine the Hochschild cohomology of the dg category $\MF^\infty(R,w)$.

Consider the matrix factorization category $\MF^\infty(R,w)$ corresponding to an isolated
hypersurface singularity. We choose the stabilized residue field as a compact
generator which yields an isomorphism
\[
\MF^{\infty}(R,w) \stackrel{\simeq}{\lra} \widehat{A} 
\]
in $\Ho(\dgku)$ as explained above.
Let us introduce the notation $\widetilde{w} = -w\otimes 1 + 1 \otimes w$.
We have to identify an object in $\MF^\infty(R\otimes_k R,\widetilde{w})$ which induces the identity functor on $\MF^\infty(R,w)$. Equivalently, we have to find an object whose image under the equivalence
\[
\left[ \MF^\infty(R\otimes_k R,\widetilde{w}) \right] \stackrel{\simeq}{\lra}
\D(A \otimes A^{\op})
\]
given by $\Hom(E^{\vee} \otimes_k E, -)$ is isomorphic to the $A \otimes A^{\op}$-module $A$. 
There is an obvious candidate for
the integral kernel which induces the identity functor: the \emph{stabilized diagonal}
$\Delta^{\stab}$. Analogously to the stabilized residue field, it is defined 
as a stabilization of $R$ considered as an $R\otimes_k R /
\widetilde{w}$-module. 
To prove that $\Delta^{\stab}$ is actually isomorphic to the identity functor we
will use Lemma \ref{mcm.approx}.

\begin{prop}\label{diag}
	The stabilized diagonal $\Delta^{\stab}$ is isomorphic to the identity
	functor on $\MF^\infty(R,w)$.
\end{prop}

\begin{proof}
	We apply Lemma \ref{mcm.approx} with 
	\[
	X = E^{\vee} \otimes_k E 
	\]
	where $E$ is the stabilized residue field in $\MF(R,w)$ and $Y = \Delta^{\stab}$ which is the
	stabilization of the diagonal $R$ as an $R\otimes_k R/\widetilde{w}$-module. 
	We obtain an isomorphism
	\[
	\MF(E^{\vee} \otimes_k
	E, \Delta^{\stab}) \cong \Hom_{R\otimes_k
	R}^{\Zt}(E^{\vee} \otimes_k E, R)
	\]
	and further
	\begin{align*}
		\Hom_{R\otimes_k R}^{\Zt}(E^{\vee} \otimes_k E, R)
		& \cong \Hom_R^{\Zt}(E^{\vee} \otimes_R E, R)\\
		& \cong \Hom_R^{\Zt}(E, E)\\ &\cong \MF(E,E)
		\text{.}
	\end{align*}
	But the latter complex is by definition $A$ and all the maps respect the
	$A \otimes A^{\op}$-module structure. 
\end{proof}

We can make the stabilized diagonal explicit after passing to the
formal completion. Indeed, by combining Theorem \ref{equiv.localization}
and Theorem \ref{formal.idempotent}, we obtain a weak equivalence
\[
\MF^\infty(R\otimes_k R,\widetilde{w}) \stackrel{\simeq}{\lra} \MF^\infty(\widehat{R\otimes_k
R},\widetilde{w})\text{,}
\]
where $\widehat{R \otimes_k R}$ denotes the completion with respect to the
maximal ideal supporting the critical locus of $\widetilde{w}$. Over $\widehat{R
\otimes_k R}$, the stabilized diagonal admits an explicit description 
via the Koszul method described in subsection \ref{stabilization}. 
Indeed, for a minimal system of generators $t_1, \dots, t_n$ of the maximal
ideal $\m \subset R$,
we obtain an isomorphism 
\[
\widehat{R \otimes_k R} \cong k[ [ x_1, \dots, x_n, y_1, \dots, y_n] ] \text{,}
\] 
where $x_i = t_i \otimes 1$ and $y_i = 1 \otimes t_i$. In this situation, the
diagonal $\widehat{R}$ is given as the quotient $\widehat{R \otimes_k R} / I$
where $I = (\Delta_1, \dots, \Delta_n)$ with $\Delta_i = x_i - y_i$.
We can find an expression of the form
$\widetilde{w} = \sum_{i=1}^n \Delta_i \widetilde{w}_i$. 
By Corollary \ref{cor.stab}, the matrix factorization
$\Delta^{\stab}_{\widehat{R\otimes_k R}}$ in $\MF^\infty(\widehat{R\otimes_k R},\widetilde{w})$ is of the form
\[
(\bigoplus_{i=0}^n {\textstyle\bigwedge\nolimits}^{i} V, \;s_0 + s_1 ) \text{,}
\]
with $s_0$ given by contraction with $(\Delta_1, \dots, \Delta_n)$ and $s_1$ by
exterior multiplication with $(\widetilde{w}_1,\dots,\widetilde{w}_n)^{\tr}$. We
abbreviate the factorization by $\Delta^{\stab}_{\widehat{R\otimes_k R}}$ by
$\widehat{\Delta}^{\stab}$.

\begin{cor} \label{jacobi} The Hochschild cochain complex of $\MF^\infty(R,w)$ is quasi-isomorphic to 
     the $\Zt$-folded Koszul complex of the regular sequence $\partial_1 w,
     \dots, \partial_n w$ in $R$. In particular, the Hochschild cohomology is
     isomorphic, as an algebra, to the Jacobian algebra
\begin{align*}
\HH^*(\MF^{\infty}(R,w)) & \cong R / (\partial_1 w, \dots, \partial_n w )
\end{align*}
     concentrated in even degree.
\end{cor}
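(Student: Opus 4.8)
The plan is to identify the Hochschild cochain complex with the derived endomorphism complex of the identity functor, use Corollary~\ref{diag} to replace the identity by the integral kernel $\Delta^{\stab}$, and then evaluate the resulting complex explicitly.

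As recalled at the beginning of this subsection, the Hochschild cochain complex of $\MF^\infty(R,w)$ is the derived endomorphism complex of the identity functor, i.e.\ the derived endomorphism complex, taken in $\MF^\infty(R\otimes_k R,\widetilde w)$, of the integral kernel inducing $\id$. By Corollary~\ref{diag} this kernel is the stabilized diagonal $\Delta^{\stab}$, which is a genuine (finite rank) matrix factorization, so its derived endomorphism complex is represented by the honest morphism complex $\Hom^{\Zt}_{R\otimes_k R}(\Delta^{\stab},\Delta^{\stab})$ with its commutator differential. Applying the Lemma preceding Corollary~\ref{diag} with $X^\bullet=Y^\bullet=\Delta^{\stab}$ and $L=R$ the diagonal $R\otimes_k R/\widetilde w$-module (whose stabilization is $\Delta^{\stab}$) produces a quasi-isomorphism of complexes
\[ \Hom^{\Zt}_{R\otimes_k R}(\Delta^{\stab},\Delta^{\stab})\;\simeq\;\Hom^{\Zt}_{R\otimes_k R}(\Delta^{\stab},R)\text{.} \]

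The technical core is to compute the right-hand complex. Recall $\Delta^{\stab}=(\bigoplus_{i=0}^{n}{\textstyle\bigwedge\nolimits}^{i}V,\;s_0+s_1)$ with $V=(R\otimes_k R)^n$, where $s_0$ is contraction with $(\Delta_1,\dots,\Delta_n)$ and $s_1$ is exterior multiplication with $(\widetilde w_1,\dots,\widetilde w_n)$. Applying $\Hom_{R\otimes_k R}(-,R)$ and using $R=(R\otimes_k R)/(\Delta_1,\dots,\Delta_n)$, each $\bigwedge^i V$ becomes $\bigwedge^i$ of a free rank-$n$ $R$-module; the contribution of $s_0$ to the differential becomes the operator induced by the image of $(\Delta_1,\dots,\Delta_n)$ in $R$, hence $0$, while the contribution of $s_1$ becomes the operator induced by the image of $(\widetilde w_1,\dots,\widetilde w_n)$ along the diagonal. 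Writing $R\otimes_k R$ in the variables $x_i:=x_i\otimes1$, $y_i:=1\otimes x_i$, so that $\Delta_i=x_i-y_i$ and $\widetilde w=w(y)-w(x)$, a Taylor expansion gives $w(y)-w(x)\equiv-\sum_i\Delta_i\,\partial_i w\pmod{(\Delta_1,\dots,\Delta_n)^2}$; since $\Delta_1,\dots,\Delta_n$ is a regular sequence, the only first syzygies are Koszul, which forces $\widetilde w_i+\partial_i w\in(\Delta_1,\dots,\Delta_n)$, i.e.\ $\widetilde w_i\equiv-\partial_i w$ on the diagonal. Hence $\Hom^{\Zt}_{R\otimes_k R}(\Delta^{\stab},R)$ is, up to an irrelevant sign, the $\Zt$-folded Koszul complex of $\partial_1 w,\dots,\partial_n w$ over $R$, which is the first assertion.

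Finally, since $w$ has an isolated singularity the Jacobian ideal $(\partial_1 w,\dots,\partial_n w)$ has finite colength in the regular local ring $R$ of dimension $n$, so $\partial_1 w,\dots,\partial_n w$ is a system of parameters and therefore a regular sequence; consequently the cohomology of the $\Zt$-folded Koszul complex is $R/(\partial_1 w,\dots,\partial_n w)$, concentrated in the degree of $\bigwedge^0$, which is even. This gives $\HH^*(\MF^\infty(R,w))\cong R/(\partial_1 w,\dots,\partial_n w)$ in even degree. I expect the main obstacle to be the bookkeeping of the previous paragraph: correctly dualizing the matrix-factorization differential under $\Hom_{R\otimes_k R}(-,R)$, checking that the $s_0$-part genuinely vanishes after restriction to the diagonal, and establishing the congruence $\widetilde w_i\equiv-\partial_i w$ modulo the diagonal ideal; once these are in place the conclusion is immediate.
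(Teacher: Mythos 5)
Your proposal is correct and follows the paper's own proof: identify the Hochschild cochain complex with the endomorphisms of the identity kernel $\Delta^{\stab}$ (Corollary \ref{diag}), apply the preceding Lemma with $X^\bullet=Y^\bullet=\Delta^{\stab}$ and $L=R$ to reduce to $\Hom^{\Zt}_{R\otimes_k R}(\Delta^{\stab},R)$, and recognize this as the folded Koszul complex of the reductions of the $\widetilde{w}_i$, which agree up to sign with the $\partial_i w$ modulo $(\Delta_1,\dots,\Delta_n)$. Your justification of that congruence via Taylor expansion plus the Koszul syzygies of the regular sequence $\Delta_1,\dots,\Delta_n$ is a tidier version of the paper's heuristic limit computation, but the route is otherwise the same.
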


\begin{proof}
	By \cite[Corollary 8.1]{toen.morita} and Proposition \ref{diag} the Hochschild cochain complex is
	quasi-isomorphic to
	\[
	\MF(\Delta^{\stab},\Delta^{\stab})
	\text{.}
	\]
	By the discussion preceding the Corollary, we can pass to the formal
	completion $\widehat{R \otimes_k R}$ and use the explicit Koszul
	description of $\widehat{\Delta}^{\stab}$.
	We apply Lemma \ref{mcm.approx} with $X = Y = \widehat{\Delta}^{\stab}$. Since 
	$Y$ stabilizes the $\widehat{ R\otimes_k R} /\widetilde{w}$-module
	$\widehat{R}$, we
	obtain an isomorphism
	\[
	\MF(\widehat{\Delta}^{\stab}, \widehat{\Delta}^{\stab}) \cong \Hom_{
	\widehat{R\otimes_k R}}^{\Zt}(\widehat{\Delta}^{\stab}, \widehat{R}) \text{.}
	\]
	The latter complex is isomorphic to the Koszul complex of
	the sequence formed by the reduction of the elements 
	$\widetilde{w_1}, \dots, \widetilde{w_n}$ modulo the ideal $(\Delta_1,
	\dots, \Delta_n)$.
	We only have to observe that 
	$\widetilde{w_i}$ is congruent to $\partial_i w$ modulo $(\Delta_1, \dots,
	\Delta_n)$. Indeed, 
	\begin{align*}
		\partial_i w(x) & = \lim_{\Delta_i \to 0} \frac{w(x + \Delta_i) -
		w(x)}{\Delta_i}\\
		& = \lim_{\Delta_i \to 0} \frac{\widetilde{w}\; \mod\; (\Delta_1,\dots,
		\widehat{\Delta_i}, \dots, \Delta_n)}{\Delta_i}\\
		& = \lim_{\Delta_i \to 0} \widetilde{w_i}\; \mod \; (\Delta_1,\dots,
		\widehat{\Delta_i}, \dots, \Delta_n)\\
		& = \widetilde{w_i} \; \mod \; (\Delta_1,\dots,\Delta_n)
	\end{align*}

	Finally, one verifies that the $0$-cycle in
	$\MF(\widehat{\Delta}^{\stab},\widehat{\Delta}^{\stab})$ given by multiplication by a scalar
	of the form $r\otimes 1$ maps under the quasi-isomorphism of Proposition \ref{exp.approx} 
	to a cycle which represents the residue class of $r$ in the Jacobian
	algebra. Thus, the induced map on cohomology is an algebra homomorphism.
\end{proof}

From this calculation we can also obtain the Hochschild cohomology of the finite
rank category $\MF(R,w)$.

\begin{cor} The Hochschild cohomology of the $2$-periodic dg category $\MF(R,w)$
	is isomorphic to the Jacobian algebra
	\begin{align*}
\HH^*(\MF(R,w)) & \cong R / (\partial_1 w, \dots, \partial_n w )
\end{align*}
concentrated in even degree.
\end{cor}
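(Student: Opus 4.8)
The statement to prove is that $\HH^*(\MF(R,w)) \cong R/(\partial_1 w, \dots, \partial_n w)$, concentrated in even degree, for any isolated hypersurface singularity $(R,w)$. The key point is that Hochschild cohomology is a derived Morita invariant, so it is insensitive to the passage from a dg category to its triangulated hull. Concretely, I would argue that $\HH^*$ cannot distinguish $\MF(R,w)$ from $\MF^\infty(R,w)$, whose Hochschild cohomology was just computed in Corollary \ref{jacobi} to be the Jacobian algebra.

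The first step is to invoke Theorem \ref{equivalence}(2), which gives an isomorphism $\MF^\infty(R,w) \simeq \widehat{A}$ in $\Ho(\dgku)$, together with the fact that $\widehat{A}$ is by construction the dg derived category of $A^{\op}$-modules, i.e. $\widehat{A} \simeq \L_W(\mmod{A^{\op}})$. Since $A$ is the endomorphism dg algebra of the compact generator $k^{\stab}$ of $\MF^\infty(R,w)$, and the full dg subcategory $\MF(R,w)$ sits inside $\MF^\infty(R,w)$ generating it (again by Theorem \ref{generator}), the categories $\MF(R,w)$, $A$, and hence $\MF^\infty(R,w) \simeq \widehat{A}$ all have equivalent derived categories of modules; more precisely they are Morita equivalent in the sense of To\"en. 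The second step is then to recall that Hochschild cohomology — being definable as the derived endomorphisms of the identity functor, equivalently as $\IRHom$ of the category with itself evaluated appropriately — depends only on the Morita equivalence class. The cleanest route: $\HH^*(\MF(R,w))$ is computed by the endomorphism dg algebra of $A$ as an $A$-bimodule, which agrees with the endomorphism dg algebra of $A$ as a module over $\MF^\infty(R\otimes_k R,\widetilde w)$ via the same identification used in the proof of Corollary \ref{jacobi}. In particular $\HH^*(\MF(R,w)) \cong \HH^*(A) \cong \HH^*(\MF^\infty(R,w))$.

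The third step is simply to cite Corollary \ref{jacobi}, which identifies $\HH^*(\MF^\infty(R,w))$ with the cohomology of the $\Zt$-folded Koszul complex of the regular sequence $\partial_1 w, \dots, \partial_n w$ — this is the Jacobian algebra $R/(\partial_1 w, \dots, \partial_n w)$, placed in even degree because folding a Koszul complex of a regular sequence leaves only $\H^0$ and because $R/(\partial_1 w,\dots,\partial_n w)$ is, under the isolated singularity hypothesis, exactly the Tyurina-type quotient with no higher homology. Combining the three steps yields the claim.

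The main obstacle is making the first two steps precise: one must be careful that Hochschild cohomology of a small dg category is genuinely a Morita invariant in the $2$-periodic setting, i.e. that the formalism of \cite[Section 7, Corollary 8.1]{toen.morita} transports verbatim (as asserted in Section \ref{subsect.homotopy}). The subtlety is that $[\MF(R,w)]$ need not be idempotent complete, so $\MF(R,w)$ and its triangulated hull $\widehat{\MF(R,w)}_{\pe} \simeq \MF(\widehat R,w)$ are not quasi-equivalent dg categories — yet they are still Morita equivalent, and Hochschild cohomology (unlike, say, $K$-theory) does not see idempotent completion. I would make this explicit by noting that $\IRHom_c(\MF^\infty(R,w),\MF^\infty(R,w))$ depends only on the compact generator's endomorphism algebra $A$, which is the same whether we start from $\MF(R,w)$ or $\MF^\infty(R,w)$; everything else is then a direct application of Corollary \ref{jacobi}.
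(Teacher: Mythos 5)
Your proposal is correct and follows essentially the paper's own route: the paper likewise deduces the claim from the invariance of Hochschild cohomology under passage to the module category (To\"en's Corollary 8.2 in the $2$-periodic setting), applied to the isomorphism $\MF^{\infty}(R,w) \simeq \widehat{\MF(R,w)}$ of Theorem \ref{equivalence}, together with the computation of $\HH^*(\MF^{\infty}(R,w))$ in Corollary \ref{jacobi}. Your extra discussion of the compact generator $A$ and of idempotent completion only elaborates the same Morita-invariance argument.
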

\begin{proof}
By Theorem \ref{equivalence} we have an isomorphism $\MF^{\infty}(R,w) \simeq
\widehat{\MF(R,w)}$ in the category $\Ho(\dgku)$. Therefore, the statement follows
immediately from \cite[Corollary 8.2]{toen.morita}.
\end{proof}

Note that, by the same argument, $\HH^*(\widehat{\MF(R,w)}_{\pe})$ is
isomorphic to the Jacobian algebra.

\subsection{Hochschild homology}

We draw attention to the well-known fact that the category
$[\MF(R,w)]$ is a Calabi-Yau category (cf. \cite[10.1.5]{buchweitz}). A lift of
this result to a statement about dg categories would therefore show that
Hochschild cochain and chain complex are in duality via the trace pairing. 
According to the above computation we would then expect the following result to hold
for the Hochschild homology of the category $\MF(R,w)$. 

\begin{theo} \label{hochschild} The Hochschild homology of the $2$-periodic dg category $\MF(R,w)$
	is given by
	\begin{align*}
		\HH_*(\MF(R,w)) & \cong R / (\partial_1 w, \dots, \partial_n w )
	\end{align*}
	concentrated in the degree given by the parity of the Krull dimension of $R$.
\end{theo}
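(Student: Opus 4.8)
\medskip\noindent\emph{Sketch of an approach.} The plan is to compute the Hochschild chain complex entirely within the Morita-theoretic framework of Section \ref{sect.morita}, using the explicit integral kernel $\Delta^{\stab}$ for the identity functor together with the self-duality of the Koszul complex. Since Hochschild homology is invariant under Morita equivalence (and in particular under idempotent completion --- compare the remark on $\HH^*(\widehat{\MF(R,w)}_{\pe})$ after Corollary \ref{jacobi}), it suffices to treat the dg category $\mathcal{C} := \widehat{\MF(R,w)}_{\pe} \simeq \widehat{A}_{\pe}$. Its Hochschild chain complex is the derived tensor product $\mathcal{C} \otimes^{\L}_{\mathcal{C}^e} \mathcal{C}$ of the diagonal bimodule with itself over $\mathcal{C}^e := \mathcal{C} \otimes \mathcal{C}^{\op}$. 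Under the identifications of Section \ref{sect.morita}, the derived category of $\mathcal{C}^e$-modules is modelled by $[\MF^\infty(R \otimes_k R, \widetilde{w})]$ with $\widetilde{w} = -w \otimes 1 + 1 \otimes w$, and the diagonal bimodule corresponds to the stabilized diagonal $\Delta^{\stab}$ by Corollary \ref{diag}.

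First I would record that $\mathcal{C}$ is homologically smooth: the factorization $\Delta^{\stab} = (\bigoplus_{i=0}^n \bigwedge^i V,\, s_0 + s_1)$ has \emph{finite} rank, hence is a compact object of $\MF^\infty(R\otimes_k R, \widetilde{w})$, so the diagonal bimodule is perfect over $\mathcal{C}^e$. Smoothness permits the standard rewriting
\[
\mathcal{C} \otimes^{\L}_{\mathcal{C}^e} \mathcal{C} \;\simeq\; \RHom_{\mathcal{C}^e}(\mathcal{C}^{!}, \mathcal{C}), \qquad \mathcal{C}^{!} := \RHom_{\mathcal{C}^e}(\mathcal{C}, \mathcal{C}^e),
\]
so that the Hochschild chain complex is the complex of homomorphisms from the inverse dualizing bimodule --- the inverse Serre functor --- to the identity functor, as announced in the introduction. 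The next, and decisive, step is to identify $\mathcal{C}^{!}$. Transported to the matrix factorization side, the functor $\RHom_{\mathcal{C}^e}(-, \mathcal{C}^e)$ becomes a suitably twisted form of $R \otimes_k R$-linear duality; applied to $\Delta^{\stab}$ --- which, by Corollary \ref{cor.stab}, is built from the Koszul complex on the regular sequence $\Delta_1, \dots, \Delta_n$ exactly as $k^{\stab}$ is built from the Koszul complex on $x_1, \dots, x_n$ --- the self-duality of the Koszul complex should yield $\mathcal{C}^{!} \simeq \Delta^{\stab}[\epsilon]$, where $\epsilon$ is the parity of the Krull dimension of $R$. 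This is precisely the computation already carried out for the residue field in the identification $\Du(k^{\stab}) \simeq k^{\stab}[\epsilon]$ of Corollary \ref{residue.dual}.

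Substituting, and invoking the lemma used to prove Corollary \ref{jacobi} together with that corollary, I would conclude
\[
\HH_*(\MF(R,w)) \;\simeq\; \RHom_{\mathcal{C}^e}\bigl(\Delta^{\stab}[\epsilon],\, \Delta^{\stab}\bigr) \;\simeq\; \HH^*(\MF(R,w))[\epsilon],
\]
i.e. the $\Zt$-folded Koszul complex of the regular sequence $\partial_1 w, \dots, \partial_n w$ in $R$, shifted by $\epsilon$; its cohomology is the Jacobian algebra $R/(\partial_1 w, \dots, \partial_n w)$ placed in the single degree $\epsilon$. As a cross-check, one can also argue directly: restricting $\Delta^{\stab}$ along the diagonal $R \otimes_k R \to R$ kills $s_0$ (contraction with the $\Delta_i$) and replaces $s_1$ by exterior multiplication with $(\widetilde{w}_1, \dots, \widetilde{w}_n) \bmod (\Delta_1, \dots, \Delta_n) = (\partial_1 w, \dots, \partial_n w)$ --- using the congruence $\widetilde{w}_i \equiv \partial_i w$ from the proof of Corollary \ref{jacobi} --- so $\mathcal{C} \otimes^{\L}_{\mathcal{C}^e}\mathcal{C}$ is the Koszul \emph{cochain} complex $R \to R^n \to \cdots \to \bigwedge^n R^n$ of $(\partial_1 w, \dots, \partial_n w)$, whose only nonvanishing cohomology is $R/(\partial_1 w,\dots,\partial_n w)$ in the top degree $n \equiv \epsilon$.

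The hard part is the bimodule-level identification $\mathcal{C}^{!} \simeq \Delta^{\stab}[\epsilon]$: one must verify that the Koszul self-duality respects the full $\mathcal{C}^e$-module (equivalently $A \otimes A^{\op}$-module) structure, not merely the triangulated structure on $[\MF(R\otimes_k R, \widetilde{w})]$, and keep careful track of the passage $\widetilde{w} \leftrightarrow -\widetilde{w}$ and of the $\op$ entering $\mathcal{C}^e$. This is, in effect, the point at which the Calabi--Yau property of $\MF(R,w)$ is upgraded from a statement about the triangulated category $[\MF(R,w)]$ to one about the dg category. Once this identification is in place, the remaining steps are routine consequences of the results of Sections \ref{sect.generator}--\ref{sect.morita} and of the explicit model for $A$.
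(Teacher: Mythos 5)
Your overall route is the paper's route: reduce by Morita invariance (Lemma \ref{morita.invariance}) to the dg algebra $A$, use perfection of the diagonal bimodule (Corollary \ref{diag}) to rewrite the chain complex as $A \otimes^{\L}_{A^e} A \simeq \RHom_{A^e}(A^!, A)$ with $A^! = \RHom_{(A^e)^{\op}}(A, A^e)$, identify $A^!$ with the stabilized diagonal shifted by the parity $\epsilon$ of $\dim R$, and then conclude by Corollary \ref{jacobi}. However, the step you yourself flag as ``the hard part'' is a genuine gap, and it is exactly the content of the paper's Lemma \ref{calabiyau}: you assert that Koszul self-duality ``should yield'' $\mathcal{C}^! \simeq \Delta^{\stab}[\epsilon]$, but an identification of underlying factorizations (as for $\Du(k^{\stab}) \cong k^{\stab}[\epsilon]$ in Corollary \ref{residue.dual}) is not an identification in $\D(A \otimes A^{\op})$; moreover the two objects do not even naively live in the same category, since $A^!$ is located in $\MF^{\infty}(R\otimes_k R, -\widetilde{w})$ via the generator $\widetilde{E} = E^{\vee}\otimes_k E$, whereas the $R\otimes_k R$-linear dual of $\Delta^{\stab}$ lives in $\MF(R\otimes_k R, \widetilde{w})$. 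The paper never verifies bimodule compatibility of Koszul self-duality ``by hand''; instead it produces the identification through a chain of natural isomorphisms: $\Hom(\widetilde{E}, X) \cong \Hom(X^{\vee}, \widetilde{E}^{\vee})$, then the observation that $\widetilde{E}^{\vee}$ is a compact generator of $\MF^{\infty}(R\otimes_k R, \widetilde{w})$ with endomorphism algebra $A^e$, so that Theorem \ref{equivalence} gives $\Hom(X^{\vee}, \widetilde{E}^{\vee}) \simeq \RHom_{(A^e)^{\op}}(\Hom(\widetilde{E}^{\vee}, X^{\vee}), A^e)$, and finally, for $X = \Delta^{\stab}[\epsilon]$ (so that $X^{\vee} \cong \Delta^{\stab}$ in $\MF(R\otimes_k R,\widetilde{w})$), the argument of Corollary \ref{diag} identifies $\Hom(\widetilde{E}^{\vee}, X^{\vee}) \simeq A$ as an $A^e$-module. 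Since every map in this chain is natural, the $A^e$-structure is respected for free. Some argument of this kind is needed; without it your proof is incomplete at its decisive point.

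A secondary remark: your ``direct cross-check'' --- restricting $\Delta^{\stab}$ along $R\otimes_k R \to R$ to obtain the Koszul complex of $(\partial_1 w, \dots, \partial_n w)$ --- does correctly predict the answer, but as an argument it presupposes an identification of $A \otimes^{\L}_{A^e} A$ with $\Delta^{\stab} \otimes_{R\otimes_k R} R$, i.e. a comparison between the derived tensor product over the dg algebra $A^e$ and an operation performed on integral kernels. Establishing that comparison is of the same order of difficulty as the missing bimodule identification, so it cannot serve as a substitute for it; the paper deliberately avoids this route by passing through $\RHom_{A^e}(A^!, A)$ and Corollary \ref{jacobi}.
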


We give a proof of this theorem which does not refer to the trace pairing.
Along the way, we actually prove that matrix factorization categories are Calabi-Yau in the sense of \cite[4.28]{pantev}.

The following definition of Hochschild homology is due to To\"en and we
reformulate it in the $2$-periodic situation. Let $T$ be a
$2$-periodic dg category. Let $\bone$ denote $\ku$ considered as a dg category with a single
object. Applying \cite[Lemma 6.2]{toen.morita} we obtain an isomorphism
\[
[ T \otimes T^{\op}, \widehat{\bone}] \cong \Iso( \Ho( \mmod{T \otimes
T^{\op}}))\text{,}
\]
where $\Iso$ refers to the set of isomorphism classes of objects.
On the other hand, by \cite[Theorem 7.2]{toen.morita} we have a natural
isomorphism
\[
[ \widehat{T \otimes T^{\op}}, \widehat{\bone}]_c \stackrel{\simeq}{\lra} [ T
\otimes T^{\op}, \widehat{\bone}]
\]
given by the pullback functor.
Therefore $T$, considered as an object in $\mmod{T \otimes T^{\op}}$ gives rise
to a continuous functor $\widehat{T \otimes T^{\op}} \to \widehat{\bone}$. Passing to homotopy
categories we obtain a map of derived categories
\[
\tr:\; \D(T^{\op} \otimes T) \to \D(\ku) \text{.}
\]
The Hochschild chain complex of $T$ is then defined to be the image of $T$ under this
map which, in Morita theoretic terms, coincides with the trace of the identity
functor on $T$.

If $A$ is a $2$-periodic dg algebra, then there is an alternative description of
the Hochschild chain complex of $A$. Let us
introduce the notation $A^e = A \otimes A^{\op}$. Consider the tensor product
\[
\mmod{(A^e)^{\op}} \times \mmod{A^e} \to \C(\ku),\; (M,N) \mapsto M
\otimes_{A^e} N
\]
which is defined to be the coequalizer of the two natural maps
\[
\xymatrix{
M \otimes A^e \otimes N \ar@<+.5ex>[r]\ar@<-.5ex>[r] & M \otimes N \text{.}
}
\]
The tensor product is a Quillen bifunctor and can thus be derived.
In this situation, one checks directly from the definition that the map 
\[
\tr:\; \D(A^{\op} \otimes A) \to \D(\ku)
\]
is given by the functor $-\otimes_{A^e}^{\L} A$. Therefore, the Hochschild
chain complex of $A$ admits the familiar description
\[
\CC_*(A) = A \otimes_{A^e}^{\L} A \text{.}
\]
The following lemma is well-known.

\begin{lem}\label{morita.invariance} The Hochschild chain complex of a $2$-periodic dg category $T$ and
its triangulated hull $\widehat{T}_{\pe}$ are isomorphic in $\D(\ku)$.
\end{lem}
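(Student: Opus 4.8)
The plan is to reduce to the case of dg \emph{algebras} and then invoke the standard Morita invariance of Hochschild homology in that setting. First I would replace the abstract $2$-periodic dg category $T$ by an equivalent dg algebra: if $T$ has a compact generator $G$ (which in the situation of interest it does, by Theorem \ref{generator}), then setting $B = \IRHom(G,G)$, Theorem \ref{genequivalence} gives an isomorphism $\widehat{T}_{\pe} \simeq \widehat{B}_{\pe}$ in $\Ho(\dgku)$, and likewise $\widehat{T} \simeq \widehat{B}$. In fact, since the construction $T \mapsto \CC_*(T)$ of the Hochschild chain complex given just above is, by its very definition via $\Ho(\mmod{T \otimes T^{\op}})$ and the trace map, invariant under isomorphism in $\Ho(\dgku)$, it suffices to treat $T = B$ a dg algebra. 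Thus the lemma follows once we know $\CC_*(B) \simeq \CC_*(\widehat{B}_{\pe})$ in $\D(\ku)$.

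For a dg algebra $B$ we have the explicit formula $\CC_*(B) = B \otimes^{\L}_{B^e} B$ recorded above. The triangulated hull $\widehat{B}_{\pe}$ is the dg category of perfect $B^{\op}$-modules, which contains the free rank-one module $B$ itself as a compact generator; hence $\widehat{B}_{\pe}$ is itself Morita equivalent to $B$, i.e. there is an isomorphism $\widehat{B}_{\pe} \simeq \widehat{B'}_{\pe}$ where $B' = \IRHom_{B^{\op}}(B,B) \simeq B$. More directly, the key input is that Hochschild homology, as defined Morita-theoretically as the trace of the identity functor, depends only on the Morita equivalence class of $T$ — and $T$ and $\widehat{T}_{\pe}$ are Morita equivalent by construction, since $\widehat{\widehat{T}_{\pe}} \simeq \widehat{T}$ (both compute the dg derived category $\D(T^{\op})$). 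Under this identification the diagonal bimodule corresponds to the diagonal bimodule, so the traces agree. This is exactly the $2$-periodic analogue of \cite[Lemma 5.4]{toen.morita} (or the corresponding statement in \cite{keller}), and by the general principle articulated in Section \ref{subsect.homotopy} the proof of loc. cit. carries over verbatim to the $\ku$-linear context.

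The one point that requires a little care — and which I expect to be the main obstacle — is checking that the trace map $\tr : \D(T^{\op} \otimes T) \to \D(\ku)$ is genuinely compatible with the Morita equivalence $T \simeq \widehat{T}_{\pe}$, i.e. that the equivalence $\D((\widehat{T}_{\pe})^{\op} \otimes \widehat{T}_{\pe}) \simeq \D(T^{\op} \otimes T)$ intertwines the two trace functors and carries the class $[\widehat{T}_{\pe}]$ to $[T]$. This is a bookkeeping matter about how the functor $-\otimes^{\L}_{T^e} T$ transports under tensoring the Morita equivalence with its opposite; once one unwinds that $\widehat{T}_{\pe}$ as a $T$-bimodule is quasi-isomorphic to the diagonal $T$-bimodule (because $T \to \widehat{T}_{\pe}$ is quasi-fully faithful and both sides are built from the same generating objects), the claim drops out. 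Since none of this uses $2$-periodicity in an essential way, I would simply cite the analogous statement from \cite{toen.morita} and remark that the proof goes through mutatis mutandis, as licensed by Section \ref{subsect.homotopy}.
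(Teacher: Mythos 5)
Your reduction to dg algebras in the first paragraph does not work as justified, and this is the main gap. The invariance of $\CC_*$ you invoke is invariance under isomorphism in $\Ho(\dgku)$, i.e.\ under quasi-equivalence; but $T$ and $B=\IRHom(G,G)$ are in general only Morita equivalent ($\widehat{T}\simeq\widehat{B}$), not quasi-equivalent -- compare $\MF(R,w)$, which has many non-isomorphic objects, with the one-object category $A$. So the step ``it suffices to treat $T=B$'' already presupposes that $\CC_*$ is a Morita invariant, which is precisely the content of the lemma (the triangulated hull $\widehat{T}_{\pe}$ \emph{is} the Morita-theoretic completion of $T$). The same circularity reappears in your ``more direct'' formulation: asserting that the trace of the identity depends only on the Morita equivalence class of $T$ is a restatement of the claim, not an input. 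Note also that the lemma is stated for an arbitrary $2$-periodic dg category, which need not admit a compact generator, so even as a reduction your first step would only cover the cases used later in the paper. Finally, the compatibility you defer to a citation cannot be discharged that way: what \cite{toen.morita} provides is the Morita invariance of Hochschild \emph{cochains} (Corollary 8.2, which the paper does cite for $\HH^*$); the chain-level statement in this $\ku$-enriched setting is exactly what the present lemma supplies, which is why the paper proves it rather than citing it.

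That said, the sketch in your last paragraph is essentially the paper's argument, and it is short to complete, so the missing content is real but small. The paper considers the dg functor $f\colon T\otimes T^{\op}\to \widehat{T}_{\pe}\otimes(\widehat{T}_{\pe})^{\op}$ induced by the Yoneda embedding and observes, by the dg Yoneda lemma, that $f^*$ carries the diagonal bimodule $\widehat{T}_{\pe}$ to the diagonal bimodule $T$; this makes the square formed by the two trace functors and $f_!$ commute in $\Ho(\dgku)$. It then remains to see that $[f_!]$ sends $T$ to $\widehat{T}_{\pe}$: since $(f_!,f^*)$ is a Quillen equivalence on bimodule categories (two applications of \cite[Lemma 7.5]{toen.morita}, one per tensor factor), this follows from the Yoneda computation $f^*(\widehat{T}_{\pe})\simeq T$ just made, and one concludes $\CC_*(T)=\tr(T)\simeq\tr(\widehat{T}_{\pe})=\CC_*(\widehat{T}_{\pe})$ in $\D(\ku)$. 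Your ``bookkeeping matter'' is exactly this Yoneda-plus-Quillen-equivalence step; writing it out, rather than appealing to an unstated result in loc.\ cit., is what turns your outline into a proof.
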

\begin{proof}
Consider the natural functor $f:\; T \otimes T^{\op} \to \widehat{T}_{\pe}
\otimes (\widehat{T}_{\pe})^{\op}$. 
By the dg Yoneda lemma, the restriction of the $\widehat{T}_{\pe}
\otimes (\widehat{T}_{\pe})^{\op}$-module $\widehat{T}_{\pe}$ along $f$ coincides
with $T$. From this, we obtain a commutative diagram
\[
\xymatrix{
\widehat{T}_{\pe} \otimes (\widehat{T}_{\pe})^{\op} \ar[r]
& \widehat{\widehat{T}_{\pe} \otimes (\widehat{T}_{\pe})^{\op}} \ar[r] &
\widehat{\bone}\\
T \otimes T^{\op} \ar[r] \ar[u]^f & \widehat{T \otimes T^{\op}} \ar[u]^{f_!}
\ar[r] & \widehat{\bone}\ar[u]^{\id}
}
\]
in $\Ho(\dgku)$, where the horizontal functors are the ones constructed in the
definition of the Hochschild chain complex above.
To obtain the result, we have to show that the functor 
\[
[f_!]:\; \D(T \otimes T^{\op}) \to \D(\widehat{T}_{\pe} \otimes
(\widehat{T}_{\pe})^{\op})
\]
maps $T$ to $\widehat{T}_{\pe}$. Since, by iterated application of \cite[Lemma 7.5]{toen.morita}, we have a Quillen equivalence
\[
\xymatrix@M=0.2cm{
\mmod{T \otimes T^{\op}} \ar@<+.5ex>[r]^(0.45){f_!} & \ar@<+.5ex>[l]^(0.55){f^*} \mmod{\widehat{T}_{\pe} \otimes
(\widehat{T}_{\pe})^{\op}}
}
\]
it suffices to show that $f^*$ maps $\widehat{T}_{\pe}$ to $T$. This follows from the dg Yoneda lemma.
\end{proof}

Let $E$ be the stabilized residue field in the category $\MF(R,w)$ and denote
$\Hom(E,E)$ by $A$.
We have morphisms in $\Ho(\dgku)$
\[
\MF(R,w) \lra \widehat{\MF(R,w)}_{\pe} \stackrel{\simeq}{\lra} \widehat{A}_{\pe}
\longleftarrow A
\]
where the middle morphism is the isomorphism from Theorem \ref{equivalence}. 
Applying Lemma \ref{morita.invariance}, we conclude that the Hochschild chain complexes of the categories
$\MF(R,w)$ and $A$ are isomorphic in $\Ho(\C(\ku))$. 
Then, using that $A$ is a perfect
$A^e$-module by Proposition \ref{diag}, we have an isomorphism
\[
\CC_*(A)  \simeq A \otimes^{\text{L}}_{A^e} A \simeq \RHom_{A^e}( A^!, A)
\]
in $\Ho(\dgku)$. Here, $\RHom_{A^e}$ denotes the $\Ho(\C(\ku))$-enriched derived Hom functor
with respect to the natural $\C(\ku)$-module structure on the model
category $\mmod{A^e}$
and we define
\[
A^! = \RHom_{(A^e)^{\op}}(A, A^e) \text{.}
\]
Note that $A^!$ admits a natural $A^e$-module
structure.

Via the compact generator $E \otimes_k E^{\vee}$ we obtain an isomorphism 
\[
\MF^\infty(R\otimes_k R,-\widetilde{w}) \stackrel{\simeq}{\lra}
\widehat{(A^e)^{\op}} =
\Int( \mmod{A^e}) \text{.}
\]
Therefore, we can calculate $\CC_*(A)$ as a morphism complex in
$\MF^\infty(R\otimes_k R,-\widetilde{w})$, provided we determine the matrix
factorization corresponding to $A^!$.

\begin{lem} \label{calabiyau} The matrix factorization corresponding to $A^!$ is the stabilized
diagonal shifted by the parity of the dimension of $R$.
\end{lem}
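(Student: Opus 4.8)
The plan is to compute $A^! = \RHom_{(A^e)^{\op}}(A, A^e)$ by translating it, via the derived Morita equivalence of Theorem \ref{equivalence} applied to the hypersurface $(R\otimes_k R, \widetilde{w})$, into a statement about matrix factorizations, and then to identify the resulting factorization with a shift of $\Delta^{\stab}$. The key input is Corollary \ref{diag}, which tells us that the $A^e$-module $A$ corresponds under the equivalence $\MF^\infty(R\otimes_k R, \widetilde{w}) \simeq \widehat{A^e}$ to the stabilized diagonal $\Delta^{\stab}$, which in turn is the stabilization of $R$ regarded as an $R\otimes_k R/\widetilde{w}$-module. So the question becomes: what is the matrix factorization in $\MF^\infty(R\otimes_k R, -\widetilde{w})$ corresponding, under $\MF^\infty(R\otimes_k R, -\widetilde{w}) \simeq \widehat{(A^e)^{\op}}$, to $\RHom_{(A^e)^{\op}}(A, A^e)$?

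First I would observe that on the level of matrix factorization categories, the functor $\RHom_{(A^e)^{\op}}(-, A^e)$ — i.e. derived $A^e$-linear duality — corresponds to the $R$-linear duality functor $X^\bullet \mapsto X^{\vee} = \Hom_{R\otimes_k R}(X^\bullet, R\otimes_k R)$, which sends $\MF(R\otimes_k R, \widetilde{w})$ to $\MF(R\otimes_k R, -\widetilde{w})$; this is the same duality already used in Section \ref{sect.morita} to identify $E^\vee$ as a compact generator. Hence $A^!$ corresponds to $(\Delta^{\stab})^{\vee}$, the $R\otimes_k R$-linear dual of the stabilized diagonal. So it remains to compute $(\Delta^{\stab})^{\vee}$ and show it is isomorphic to $\Delta^{\stab}[\epsilon]$, where $\epsilon$ is the parity of $\dim R$.

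For this last step I would use the explicit Koszul-type model of $\Delta^{\stab}$ furnished by Corollary \ref{cor.stab}: it is $(\bigoplus_{i=0}^n \bigwedge^i V,\ s_0 + s_1)$ with $V = (R\otimes_k R)^n$, $s_0$ contraction with $(\Delta_1,\dots,\Delta_n)$, and $s_1$ exterior multiplication with $(\widetilde{w}_1,\dots,\widetilde{w}_n)$. Dualizing over $R\otimes_k R$ and using the canonical self-duality of the exterior algebra (the perfect pairing $\bigwedge^i V \otimes \bigwedge^{n-i} V \to \bigwedge^n V \cong R\otimes_k R$), contraction and exterior multiplication swap roles, so $(\Delta^{\stab})^{\vee}$ is $(\bigoplus_i \bigwedge^{n-i} V,\ s_1^\vee + s_0^\vee)$, which after relabeling $j = n-i$ is again a Koszul factorization of the same shape — namely exterior multiplication by $(\Delta_1,\dots,\Delta_n)$ and contraction with $(\widetilde{w}_1,\dots,\widetilde{w}_n)$ — but with an overall parity shift by $n \bmod 2 = \epsilon$, since the total exterior degree is reindexed $i \mapsto n-i$. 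One then checks, exactly as in the verification in Corollary \ref{residue.dual} that $\Du(k^{\stab}) \cong k^{\stab}[\epsilon]$, that this dual factorization is isomorphic to $\Delta^{\stab}$ itself up to the shift by $\epsilon$; this uses that the roles of the regular sequence $(\Delta_i)$ and the coefficients $(\widetilde{w}_i)$ are interchangeable in the Koszul model (swapping $s_0 \leftrightarrow s_1$ is an automorphism of the factorization up to relabeling), together with the fact that $R\otimes_k R$ is regular so $(\Delta_i)$ is a regular sequence.

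The main obstacle I anticipate is the first translation step: justifying carefully that derived $A^e$-linear duality $\RHom_{(A^e)^{\op}}(-, A^e)$ corresponds, under the two equivalences $\MF^\infty(R\otimes_k R, \widetilde{w}) \simeq \widehat{A^e}$ and $\MF^\infty(R\otimes_k R, -\widetilde{w}) \simeq \widehat{(A^e)^{\op}}$, to the naive $R\otimes_k R$-linear duality $X^\bullet \mapsto X^{\vee}$ on matrix factorizations — i.e. that these two duality functors are intertwined by the Morita equivalences. This requires checking compatibility on the compact generators (sending $E^\vee \otimes_k E$ to its $A^e$-module $A$, and verifying that $R$-linear dualization sends it to the generator of the $-\widetilde{w}$ side, which is essentially the content already recorded in Section \ref{sect.morita}) and then invoking that both functors are continuous, exact, and agree on generators, hence agree. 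Once that bookkeeping is in place, the computation of $(\Delta^{\stab})^{\vee}$ is a routine exterior-algebra duality argument yielding the claimed shift by the parity of $\dim R$.
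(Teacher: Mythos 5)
Your strategy is in essence the paper's: the two pillars are Corollary \ref{diag} (the $A^e$-module $A$ corresponds to the stabilized diagonal) and the Koszul self-duality of the stabilized diagonal, which is what produces the shift by the parity of $n$. The difference lies in how you match $\RHom_{(A^e)^{\op}}(-,A^e)$ with $(-)^{\vee}$, and here your proposed justification of the step you yourself flag as the main obstacle would not work as stated: the two dualities are contravariant and send coproducts to products, so the scheme ``continuous, exact, agree on generators, hence agree'' does not apply. Either restrict to compact/perfect objects (which suffices, since $A$ and $\Delta^{\stab}$ are perfect) and run a thick-subcategory argument after constructing a natural comparison map, or do what the paper does and avoid the functor-level statement altogether: it takes the candidate $X=\Delta^{\stab}[\epsilon]$ in $\MF(R\otimes_k R,-\widetilde{w})$, uses the natural isomorphism of complexes $\Hom(\widetilde{E},X)\cong\Hom(X^{\vee},\widetilde{E}^{\vee})$, then quasi-full-faithfulness on compacts of the equivalence from Theorem \ref{equivalence} for the generator $\widetilde{E}^{\vee}$ with endomorphism algebra $A^e$ to get $\Hom(X^{\vee},\widetilde{E}^{\vee})\simeq\RHom_{(A^e)^{\op}}(\Hom(\widetilde{E}^{\vee},X^{\vee}),A^e)$, and finally $X^{\vee}\cong\Delta^{\stab}$ together with Corollary \ref{diag} to identify the last argument with $A$. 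This objectwise chain is exactly the resolution of your bookkeeping obstacle.

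Two smaller points on your duality computation. First, $(\Delta^{\stab})^{\vee}$ lives in $\MF(R\otimes_k R,-\widetilde{w})$, so the conclusion should be that it is the stabilized diagonal of that category (which is what the lemma means), shifted by $\epsilon$; it cannot literally equal $\Delta^{\stab}[\epsilon]$ on the $\widetilde{w}$ side. Second, the parenthetical claim that ``swapping $s_0\leftrightarrow s_1$ is an automorphism of the factorization'' is not correct as stated; the actual mechanism, which your surrounding computation already contains, is that under the perfect pairing $\bigwedge^i V\otimes\bigwedge^{n-i}V\to\bigwedge^n V$ contraction and exterior multiplication exchange roles, so the dual Koszul factorization is identified, up to signs, with the Koszul factorization of $-\widetilde{w}$ built from the same data, reindexed by $i\mapsto n-i$ and hence shifted by $n\bmod 2$, exactly as in the verification that $\Du(k^{\stab})\cong k^{\stab}[\epsilon]$ in Corollary \ref{residue.dual}.
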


\begin{proof} Let $\widetilde{E} = E \otimes_k E^{\vee}$. We have to find a matrix
factorization $X$ in $\MF(R\otimes_k R, -\widetilde{w})$ such that
\[
\MF(\widetilde{E}, X) \simeq \RHom_{(A^e)^{\op}}(A, A^{e})
\]
For any factorization $X$, we have
\begin{align*}
\MF(\widetilde{E}, X) & \cong \MF(X^{\vee}, \widetilde{E}^{\vee}) \text{,}
\end{align*}
where the right-hand side is a morphism complex in the category $\MF(R\otimes_k R,
\widetilde{w})$.
Now $\widetilde{E}^{\vee}$ is a compact generator of $\MF^{\infty}(R\otimes_k R,
\widetilde{w})$ with endomorphism dg algebra $A^e$. By Theorem
\ref{equivalence}, we obtain a quasi-isomorphism
\begin{align*}
\MF(X^{\vee}, \widetilde{E}^{\vee}) 
\simeq \RHom_{(A^e)^{\op}}(\MF(\widetilde{E}^{\vee},X^{\vee}), A^e) \text{.}
\end{align*}
We choose $X$ to be the stabilized diagonal shifted by the parity of the
dimension of $R$. Then $X^{\vee}$ is isomorphic to the stabilized diagonal
in the category $\MF(R\otimes_k R,\widetilde{w})$. This can be explicitly
verified by passing to completions and using the explicit Koszul description of
the stabilized diagonal (cf. discussion preceding Corollary \ref{jacobi}). Finally, the argument of Proposition \ref{diag} yields a quasi-isomorphism
\[
\MF(\widetilde{E}^{\vee},X^{\vee}) \simeq A
\]
completing the proof.
\end{proof}

Note that the lemma in combination with Corollary \ref{jacobi} immediately
implies Theorem \ref{hochschild}. We also remark that, in light of derived Morita
theory, the bimodule $A^!$ determines an endofunctor on the category
$\MF(R,w)$. It corresponds to the inverse Serre functor and the fact that
it is isomorphic to a shift of the identity expresses the Calabi-Yau property of
$\MF(R,w)$.

\section{Noncommutative geometry}
\label{sect.geom}

We conclude with some remarks on the geometric implications of our results.
In the introduction, we pointed out that we want to think of the dg category of
matrix factorizations as the derived category of sheaves on a noncommutative space. 
Let $\X$ be a hypothetical noncommutative space associated to an isolated hypersurface 
singularity $(R,w)$, where $R$ is a regular local $k$-algebra with residue
field $k$. Without explicitly knowing how to think of $\X$ itself, we
postulate
\[
\D^\text{qcoh}_{\text{dg}}(\X) \simeq \MF^{\infty}(R,w) \text{.}
\]
The symbol $\X$ is thus merely of linguistical character, the defining mathematical
structure attached to it is $\D^\text{qcoh}_{\text{dg}}(\X)$. 
We show how to establish several important
properties of the space $\X$, using the results of the previous sections. Details
on the terminology which we use can be found in \cite{kontsevich-2006} and
\cite{pantev}.\\ 

\noindent
{\bf $\X$ is dg affine.} A noncommutative space $\X$ is called dg affine if
$\D^\text{qcoh}_{\text{dg}}(\X)$ is quasi-equivalent to the dg derived category
of some dg algebra $A$.
In our case, this is expressed by Theorem \ref{equivalence} where $A$ is
explicitly given as the endomorphism algebra of the stabilized residue field.\\

\noindent
{\bf Perfect complexes on $\X$.}
A general noncommutative space $\X$ is given by the dg category 
$\D^\text{qcoh}_{\text{dg}}(\X)$ of quasi-coherent sheaves on $\X$. The
category $\D^\text{perf}_{\text{dg}}(\X)$ is then defined to be the
full dg subcategory of compact objects.
In the case at hand, we have 
\[
\D^\text{perf}_{\text{dg}}(\X) \simeq \widehat{\MF(R,w)}_{\pe} 
\]
by Corollary \ref{split}. Note that the explicit description
\[
\widehat{\MF(R,w)}_{\pe} \simeq \MF(\widehat{R},w)
\]
given in Theorem \ref{formal.idempotent} implies that $\X$ in fact only depends
on the formal germ of $(R,w)$.\\

\noindent 
{\bf $\X$ is proper over $k$.}
By definition, a dg affine noncommutative space is proper over $k$ if the
cohomology of its defining dg algebra is finite dimensional over $k$. In our
case, the algebra $\H^*(A)$ is isomorphic to a finitely generated Clifford algebra and thus
finite dimensional. Note that, in our $2$-periodic situation, the finiteness condition refers
to the $\Zt$-graded object $A$. To be more precise, we should call $\X$ proper over
$\ku$.\\

\noindent 
{\bf $\X$ is homologically smooth over $k$.}
A dg affine noncommutative space is defined to be homologically smooth if $A$ is a perfect
$A \otimes A^{\op}$-module. For $\X$ this follows from the proof of Proposition
\ref{diag}: the stabilized diagonal is a compact object in
$[ \MF^{\infty}(R\otimes_k R, \widetilde{w}) ]$ which
maps to $A$ under the coproduct preserving equivalence
\[
[\MF^{\infty}(R\otimes_k R, \widetilde{w})] \stackrel{\simeq}{\lra} \D(A \otimes
A^{\op}) \text{.}
\]
The homological smoothness of $\X$ suggests that we may as well think of the category 
$\D^\text{perf}_{\text{dg}}(\X)$ as an analogue of the bounded derived category of
coherent sheaves on $\X$.\\

\noindent 
{\bf Hodge-to-de Rham degeneration for $\X$.}
The Hochschild homology of the category $\D^\text{perf}_{\text{dg}}(\X)$ should be thought of as the
Hodge cohomology of the space $\X$. The periodic cyclic homology of
$\D^\text{perf}_{\text{dg}}(\X)$ plays the role of the de Rham cohomology of $\X$. 
Generalizing the case of a commutative scheme,
there is a spectral sequence from Hochschild homology to periodic cyclic
homology. For the space $\X$ this spectral sequence degenerates, confirming
the general degeneration conjecture in the case of matrix factorization categories. Indeed, this
immediately follows from the fact that the Hochschild homology is concentrated
in a single degree. Therefore, Connes' $B$ operator must vanish on all higher pages of
the Hodge-to-de Rham spectral sequence since
it has degree $1$. In particular, we obtain 
\[
\HP_*(\MF(R,w)) \cong \HH_*(\MF(R,w)) 
\]
where $\HP_*$ denotes periodic cyclic homology.\\

\noindent 
{\bf $\X$ is a Calabi-Yau space.}
Lemma \ref{calabiyau} implies the existence of an isomorphism 
\[
A^! \simeq A[n]
\]
in $\D(A \otimes A^{\op})$ where $n$ is the dimension of $R$. Thus,
$\X$ is a Calabi-Yau space in the sense of \cite[4.28]{pantev}. In view of
\cite[5.2]{dyckmurf} this gives, in the case of matrix factorization categories, an affirmative answer to a general conjecture by Kontsevich-Soibelman \cite[11.2.8]{kontsevich-2006}. 

\bibliographystyle{halpha}
\bibliography{biblio}

\begin{thebibliography}{BHLW06}

\bibitem[AB00]{avramov}
L-L. Avramov and R.-O. Buchweitz.
\newblock {Homological algebra modulo a regular sequence with special attention
  to codimension two}.
\newblock {\em J. Algebra}, 230(1):24--67, 2000.

\bibitem[BEH87]{buchweitz2}
R.-O. Buchweitz, D.~Eisenbud, and J.~Herzog.
\newblock {Cohen-Macaulay Modules on Quadrics}.
\newblock In {\em Singularities, Representation of Algebras, and Vector
  Bundles}, volume 1273 of {\em Lecture Notes in Mathematics}, pages 58--95,
  1987.

\bibitem[BGS87]{buchweitz.greuel}
R.-O. Buchweitz, G.-M. Greuel, and F.-O. Schreyer.
\newblock {Cohen-{M}acaulay modules on hypersurface singularities. {II}}.
\newblock {\em Invent. Math.}, 88(1):165--182, 1987.

\bibitem[BHLW06]{brunner}
I.~Brunner, M.~Herbst, W.~Lerche, and J.~Walcher.
\newblock Matrix factorizations and mirror symmetry: The cubic curve.
\newblock {\em JHEP}, 0611:006, 2006.

\bibitem[BN93]{neeman}
M.~B\"okstedt and A.~Neeman.
\newblock Homotopy limits in triangulated categories.
\newblock {\em Compositio Math.}, 86:209--234, 1993.

\bibitem[Buc86]{buchweitz}
R.-O. Buchweitz.
\newblock {Maximal Cohen-Macaulay modules and Tate-Cohomology over Gorenstein
  rings}.
\newblock {\em preprint}, 1986.
\newblock available at \url{http://hdl.handle.net/1807/16682}.

\bibitem[Che77]{chen}
K.-T. Chen.
\newblock {Iterated path integrals}.
\newblock {\em Bull. Amer. Math. Soc.}, 83:831--879, 1977.

\bibitem[{Che}07]{chen.xw}
{X.-W.} {Chen}.
\newblock {Relative Singularity Categories and Gorenstein-Projective Modules}.
\newblock {\em ArXiv e-prints}, September 2007, 0709.1762.

\bibitem[CT09]{caldararu}
A.~Caldararu and J.~Tu.
\newblock {A noncommutative approach to Landau-Ginzburg models}.
\newblock {\em preprint}, 2009.

\bibitem[DM10]{dyckmurf}
T.~Dyckerhoff and D.~Murfet.
\newblock {The Kapustin-Li formula revisited}, 2010, arXiv:math/1004.0687.

\bibitem[{Efi}09]{efimov}
A.~I. {Efimov}.
\newblock {Homological mirror symmetry for curves of higher genus}.
\newblock {\em ArXiv e-prints}, 2009, 0907.3903.

\bibitem[Eis80]{eisenbud}
D.~Eisenbud.
\newblock Homological algebra on a complete intersection, with an application
  to group representations.
\newblock {\em Trans. Amer. Math. Soc.}, 260:35--64, 1980.

\bibitem[Eis95]{eisenbud.comalg}
D.~Eisenbud.
\newblock {\em Commutative Algebra}.
\newblock Springer-Verlag, New York, 1995.

\bibitem[GJ81]{jensen}
L.~Gruson and C.U. Jensen.
\newblock Dimensions cohomologiques relie\'es aux foncteur lim(i).
\newblock In {\em S\'eminaire d'alg\`ebre Paul Dubreil et Marie-Paule
  Malliavin, Lecture Notes in Mathematics 867}, pages 234--295. Springer,
  Berlin, 1981.

\bibitem[GS86]{gug.stash}
V.~K. A.~M. Gugenheim and J.~D. Stasheff.
\newblock On perturbations and {$A\sb \infty$}-structures.
\newblock {\em Bull. Soc. Math. Belg. S\'er. A}, 38:237--246, 1986.

\bibitem[Har77]{hartshorne}
R.~Hartshorne.
\newblock {\em Algebraic Geometry}.
\newblock Springer-Verlag, New York Heidelberg Berlin, 1977.

\bibitem[Hov99]{hovey}
Mark Hovey.
\newblock {\em {Model categories}}, volume~63 of {\em Mathematical Surveys and
  Monographs}.
\newblock American Mathematical Society, Providence, RI, 1999.

\bibitem[Kel90]{keller-frob}
B.~Keller.
\newblock Chain complexes and stable categories.
\newblock {\em Manuscripta Math.}, 67(4):379--417, 1990.

\bibitem[Kel94]{keller}
B.~Keller.
\newblock {Deriving DG categories}.
\newblock {\em Ann. Sci. Ec. Norm. Sup.}, 27, 1994.

\bibitem[Kel99]{keller-1999}
B.~Keller.
\newblock {Introduction to A-infinity algebras and modules}, 1999.

\bibitem[KKP08]{pantev}
L.~Katzarkov, M.~Kontsevich, and T.~Pantev.
\newblock {Hodge theoretic aspects of mirror symmetry}.
\newblock In {\em Proceedings of Symposia in Pure Mathematics}, volume~78,
  pages 78--174, 2008.

\bibitem[KL04]{kapustin-li}
A.~Kapustin and Y.~Li.
\newblock D-branes in {T}opological {M}inimal {M}odels: the {L}andau-{G}inzburg
  {A}pproach.
\newblock {\em JHEP}, 0407:045, 2004.

\bibitem[Kn{\"o}87]{knoerrer}
H.~Kn{\"o}rrer.
\newblock {Cohen-{M}acaulay modules on hypersurface singularities. {I}}.
\newblock {\em Invent. Math.}, 88(1):153--164, 1987.

\bibitem[Knu91]{knus}
M.-A. Knus.
\newblock {\em {Quadratic and {H}ermitian forms over rings}}, volume 294 of
  {\em Grundlehren der Mathematischen Wissenschaften [Fundamental Principles of
  Mathematical Sciences]}.
\newblock Springer-Verlag, Berlin, 1991.

\bibitem[KR04]{kapustin.rozansky}
A.~Kapustin and L.~Rozansky.
\newblock On the relation between open and closed topological strings.
\newblock {\em Comm. Math. Phys.}, 252(1-3):393--414, 2004.

\bibitem[KR08]{khovanov}
M.~Khovanov and L.~Rozansky.
\newblock Matrix factorizations and link homology.
\newblock {\em Fund. Math.}, 199(1):1--91, 2008.

\bibitem[KS]{kont.soib}
M.~Kontsevich and Y.~Soibelman.
\newblock {Deformation Theory}.
\newblock {\em in preparation}.

\bibitem[KS01]{kont.soib2}
M.~Kontsevich and Y.~Soibelman.
\newblock Homological mirror symmetry and torus fibrations.
\newblock In {\em Symplectic geometry and mirror symmetry ({S}eoul, 2000)},
  pages 203--263. World Sci. Publ., River Edge, NJ, 2001.

\bibitem[KS06]{kontsevich-2006}
M.~Kontsevich and Y.~Soibelman.
\newblock {Notes on A-infinity algebras, A-infinity categories and
  non-commutative geometry. I}, 2006, arXiv:math/0606241.

\bibitem[KST05]{2005taksaito}
H.~{Kajiura}, K.~{Saito}, and A.~{Takahashi}.
\newblock {Matrix Factorizations and Representations of Quivers II: type ADE
  case}.
\newblock {\em ArXiv Mathematics e-prints}, 2005, arXiv:math/0511155.

\bibitem[KVdB08]{keller.bergh}
B.~Keller and M.~Van~den Bergh.
\newblock On two examples by {I}yama and {Y}oshino, 2008, arXiv:math/0803.0720.

\bibitem[LH03]{hasegawa}
K.~Lef\`evre-Hasegawa.
\newblock {\em {Sur les A-infini cat\'egories}}.
\newblock PhD thesis, {Universit\'e Paris Diderot - Paris 7}, 2003.

\bibitem[Loo84]{looijenga}
E.~J.~N. Looijenga.
\newblock {\em {Isolated singular points on complete intersections}}, volume~77
  of {\em London Mathematical Society Lecture Note Series}.
\newblock Cambridge University Press, Cambridge, 1984.

\bibitem[Mer99]{merkulov}
S.~A. Merkulov.
\newblock Strong homotopy algebras of a {K}\"ahler manifold.
\newblock {\em Internat. Math. Res. Notices}, 3:153--164, 1999.

\bibitem[Orl03]{orlov-2003}
D.~Orlov.
\newblock {Triangulated categories of singularities and {D}-branes in
  {L}andau-{G}inzburg models}, 2003.

\bibitem[Orl05a]{orlov-2005}
D.~Orlov.
\newblock {Derived categories of coherent sheaves and triangulated categories
  of singularities}.
\newblock {\em preprint}, 2005.

\bibitem[Orl05b]{orlov-2005-1}
D.~Orlov.
\newblock Triangulated categories of singularities and equivalences between
  {L}andau-{G}inzburg models, 2005.

\bibitem[Orl09]{orlov-2009}
D.~Orlov.
\newblock Formal completions and idempotent completions of triangulated
  categories of singularities, 2009.

\bibitem[Pos09]{positselski}
L.~Positselski.
\newblock Two kinds of derived categories, {K}oszul duality, and
  comodule-contramodule correspondence, 2009, arXiv:math/0905.2621.

\bibitem[Sch03]{schoutens}
H.~Schoutens.
\newblock Projective dimension and the singular locus.
\newblock {\em Comm. Algebra}, 31(1):217--239, 2003.

\bibitem[Seg09]{segal}
E.~Segal.
\newblock The closed state space of affine {L}andau-{G}inzburg {B}-models,
  2009, arXiv:math/0904.1339.

\bibitem[Sei08]{seidel}
P.~Seidel.
\newblock Homological mirror symmetry for the genus two curve, 2008,
  arXiv:math/0812.1171.

\bibitem[SS03]{schwede}
S.~Schwede and B.~Shipley.
\newblock {Stable model categories are categories of modules}.
\newblock {\em Topology}, 42:103--153, 2003.

\bibitem[Sta70]{stasheff}
J.~Stasheff.
\newblock {\em {$H$}-spaces from a homotopy point of view}.
\newblock Lecture Notes in Mathematics, Vol. 161. Springer-Verlag, Berlin,
  1970.

\bibitem[Swa60]{swan}
Richard~G. Swan.
\newblock {Induced representations and projective modules}.
\newblock {\em Ann. of Math. (2)}, 71:552--578, 1960.

\bibitem[Tab05]{tabuada}
G.~Tabuada.
\newblock Une structure de cat\'egorie de mod\`eles de {Q}uillen sur la
  cat\'egorie des dg-cat\'egories.
\newblock {\em C. R. Math. Acad. Sci. Paris}, 340(1):15--19, 2005.

\bibitem[Tak05]{takahashi-2005}
A.~Takahashi.
\newblock {Matrix Factorizations and Representations of Quivers I}, 2005.

\bibitem[To{\"e}07]{toen.morita}
B.~To{\"e}n.
\newblock The homotopy theory of {$dg$}-categories and derived {M}orita theory.
\newblock {\em Invent. Math.}, 167(3):615--667, 2007.

\bibitem[Wei94]{weibel}
C.~A. Weibel.
\newblock {\em An introduction to homological algebra}, volume~38 of {\em
  Cambridge Studies in Advanced Mathematics}.
\newblock Cambridge University Press, Cambridge, 1994.

\bibitem[Yos90]{yoshino}
Y.~Yoshino.
\newblock {\em {Cohen---Macaulay modules over Cohen---Macaulay Rings}}, volume
  146 of {\em London Mathematical Society Lecture Notes Series}.
\newblock Cambridge University Press, 1990.

\end{thebibliography}

\end{document}